\documentclass[11pt,reqno]{amsart}

\usepackage[margin=1in]{geometry}

\usepackage{amssymb, mathrsfs, amscd, bbm}
\usepackage{extpfeil}

\usepackage{graphicx}
\usepackage{colortbl}
\usepackage{float}
\usepackage{caption}
\usepackage{subcaption} 
\graphicspath{{Figure/}}
\usepackage{comment}
\usepackage{enumerate}
\usepackage{url}
\usepackage{hyperref}
\usepackage{doi}
\usepackage{kotex} 

\newtheorem{theorem}{Theorem}[section]
\newtheorem{lemma}{Lemma}[section]

\newtheorem{proposition}{Proposition}[section]
\newtheorem{remark}{Remark}[section]

\numberwithin{equation}{section} 

\newcommand{\R}{\mathbb{R}}
\newcommand{\e}{\varepsilon}
\newcommand{\pa}{\partial}

\newcommand{\norm}[1]{\left\lVert#1\right\rVert}

\makeatletter
\@namedef{subjclassname@2020}{\textup{2020} Mathematics Subject Classification}
\makeatother

\title[Outflow problem for the NSK equations]{Time-asymptotic stability of viscous shocks for the outflow problem of one-dimensional compressible fluids of Korteweg type}

\subjclass[2020]{35Q35, 76N06}
\keywords{outflow problem; Navier--Stokes--Korteweg equations; viscous--dispersive shock; asymptotic stability}
\thanks{}

\author[S. Han]{Sungho Han}
\address[Sungho Han]{Department of Mathematical Sciences, \newline
Korea Advanced Institute of Science and Technology (KAIST), \newline
Daejeon 34141, Korea}
\email{sungho\_han@kaist.ac.kr}

\author[J. Kim]{Jeongho Kim}
\address[Jeongho Kim]{Department of Applied Mathematics, \newline 
Kyung Hee University, 1732, Deogyeong-daero, Giheung-gu, \newline
Yongin-si, Gyeonggi-do 17104, Republic of Korea}
\email{jeonghokim@khu.ac.kr}

\author[H. Oh]{HyeonSeop Oh}
\address[HyeonSeop Oh]{Department of Mathematical Sciences, \newline
Korea Advanced Institute of Science and Technology (KAIST), \newline
Daejeon 34141, Korea}
\email{ohs2509@kaist.ac.kr}

\begin{document}

\bibliographystyle{acm}

\thanks{\textbf{Acknowledgment.} S. Han and H. Oh were partially supported by the National Research Foundation of Korea (RS-2024-00361663 and NRF-2019R1A5A1028324), and S. Han was additionally supported by the Doomyung Fellowship. J. Kim was supported by Samsung Science and Technology Foundation under Project Number SSTF-BA2401-01. The authors thank Professor Moon-Jin Kang for his valuable comments.}

\begin{abstract} 
We study the time-asymptotic stability of viscous-dispersive shock waves for the outflow problem of the barotropic Navier--Stokes--Korteweg equations, which describe viscous fluids with internal capillarity. Assuming that the far-field state is subsonic or transonic and that the velocity at the boundary is larger than the far-field velocity, we prove that the solution converges to the corresponding viscous-dispersive shock wave as $t \to +\infty$, provided that the shock amplitude and the initial perturbation are sufficiently small. The proof is based on the method of $a$-contraction with shifts (for viscous equations) introduced in \cite{KV17,KV21,KVW23}. A main difficulty comes from controlling the boundary effect of the viscous-dispersive shock wave, as well as the influence of capillarity near the boundary.
\end{abstract}

\maketitle

\section{Introduction}
\setcounter{equation}{0}

The compressible Navier--Stokes--Korteweg (NSK) equations are a fundamental model for describing the motion of compressible fluids exhibiting liquid-vapor phase transitions \cite{DS85, Kor01, VdW94}. To mathematically account for the capillarity effects and surface tension at phase interfaces, the NSK equations extend the standard compressible Navier--Stokes (NS) equations by incorporating a higher-order dispersive term, known as the Korteweg stress tensor \cite{BDD06, Kotschote08, Kotschote10}.

In this paper, we consider the initial-boundary value problem (IBVP) for the one-dimensional barotropic NSK equations in Eulerian coordinates on the half-line $\mathbb{R}_+ := (0, \infty)$:
\begin{align} \label{eq:NSK}
\begin{aligned}
    &\rho_t +(\rho u)_x=0, \quad t>0, \quad x>0, \\
    &(\rho u)_t +(\rho u^2+p(\rho))_x = \mu u_{xx} + \kappa \rho\rho_{xxx},
\end{aligned}
\end{align}
where $\rho=\rho(t,x)>0$ and $u=u(t,x)$ denote the density and velocity of the fluid, and $p(\rho)$ is the pressure. The constants $\mu > 0$ and $\kappa > 0$ represent the viscosity and capillarity coefficients, respectively. We assume the standard $\gamma$-law pressure $p(\rho)=\rho^\gamma$ with $\gamma > 1$. The third-order derivative term can be rewritten by using the Korteweg stress tensor $K(\rho)$ as $\rho\rho_{xxx} = \left(\rho\rho_{xx}-\frac{1}{2}(\rho_x)^2\right)_x =: K(\rho)_x$. 

We impose the initial condition
\begin{equation*}
    (\rho, u)(0,x) = (\rho_0(x), u_0(x)), \quad x \in \mathbb{R}_+,
\end{equation*}
with the non-vacuum condition $\inf_{x\in\mathbb{R}_+}\rho_0(x)>0$, and the constant far-field condition
\begin{equation*}
    \lim_{x\to\infty} (\rho(t,x), u(t,x)) = (\rho_+, u_+), \quad \rho_+>0.
\end{equation*}
At the boundary $x=0$, appropriate boundary values should be prescribed, depending on the type of the initial boundary value problem. We specifically focus on the outflow problem, where the fluid flows out from the domain. This corresponds to a strictly negative velocity at the boundary $x=0$:
\begin{equation}\label{eq:BC-outflow}
    u(t,0) = u_- < 0.
\end{equation}
Due to the presence of the higher-order capillarity term in the NSK equations, an additional boundary condition is required to ensure the well-posedness of the problem. Accordingly, we impose the boundary condition on the derivative of the density to close the system:
\begin{equation}\label{eq:BC-capillarity}
    \rho_x(t,0) = 0.
\end{equation}
Physically, the condition $\rho_x(t,0) = 0$ corresponds to a neutral wetting condition in the context of diffuse-interface models. As established in the Cahn--Hilliard theory, this condition implies a vanishing variation of the fluid-solid surface energy, meaning the fluid interface intersects the wall at a $90^\circ$ contact angle with no preferential affinity for either phase \cite{Seppecher93, Seppecher96}. Through the variational approach, this thermodynamic principle can be extended to the Navier--Stokes--Korteweg framework, where it translates into the homogeneous Neumann boundary condition to prevent any boundary-driven capillary mass flux \cite{Kotschote08, Kotschote10}.

The main goal of this paper is to establish the time-asymptotic stability of right-going (i.e., moving from left to right) viscous-dispersive (viscous-capillary) shock profiles for the outflow problem of the NSK equations. It is well known that the time-asymptotic stability of wave patterns for the barotropic Navier--Stokes (NS) and the NSK equations is closely related to the Riemann problem of the Euler system.

For the whole-space problem, where the spatial domain is $\R$, the time-asymptotic stability of viscous shocks, rarefactions, and their composite waves for the NS equations has been extensively studied; we refer to \cite{HKK23, KVW23, Liu86, MN85, MN86, SzepessyXin93} and the references therein. Corresponding results for the NSK equations have been obtained in \cite{C12,CHZ15,HKKL25-JDE,HK26}, despite the analytical difficulties presented by the nonlinear third-order dispersion term.

Turning to the initial-boundary value problem, a boundary layer solution may appear as an asymptotic profile due to the presence of the boundary. As classified by Matsumura \cite{MatBVP}, the asymptotic profile of the NS equation is determined by the boundary and far-field states, leading to various wave patterns: the boundary layer solution, the viscous shock, the rarefaction wave, or their superpositions. Based on this classification, the time-asymptotic stability of boundary layer solutions, rarefaction waves, and their superpositions for the outflow problem of the NS equations was studied in \cite{KNZ03, KZ08, KZ09}. Similar developments for the NSK equations were also obtained in \cite{Hong20, LTY22, LXC23, LZ21}.

However, the stability of viscous-dispersive shock waves for the outflow problem of the NSK equations has remained open due to technical difficulties caused by the outflow boundary condition. The classical anti-derivative method, combined with the shift argument, has been successfully applied to the impermeable wall and inflow problems for the NS equations \cite{HMS03, MM99, MN01} and the NSK equations \cite{CLS19, Hong22}. However, it cannot be directly applied to the outflow problem due to the absence of a density boundary condition, which makes it difficult to define the anti-derivative variables and the constant shift as in \cite{MM99}. To overcome this limitation, we employ the so-called the method of $a$-contraction with shifts, developed by \cite{KV17, KV21, KV-Inven, KVW23} for viscous equations. This $L^2$-based technique effectively controls shock perturbations and boundary terms without relying on anti-derivatives. Furthermore, this approach has been extended to initial-boundary value problems for the NS equations \cite{HKKKO26,HKKL25-JMAA, KOW25}. Motivated by these developments, we apply the method of $a$-contraction with shifts to the outflow problem of the NSK equations and establish the time-asymptotic stability of viscous-dispersive shock waves.

Regarding the asymptotic profile, we focus on right-going weak viscous-dispersive shocks. Specifically, we define the subsonic and transonic regime as
$$\Omega_{sub}^- := \{(\rho, u) \mid  \lambda_2(\rho,u) > 0, \, u < 0\}, \quad  \text{and} \quad \Gamma_{trans}^- := \{(\rho, u) \mid  \lambda_2(\rho,u) = 0\},$$
where $\lambda_2(\rho,u) := u + \sqrt{p'(\rho)}$ denotes the second eigenvalue of the hyperbolic part of the NSK equations. We assume that the right-end state $(\rho_+, u_+)$ with $u_+ < u_-$ belongs to either the subsonic or transonic regime:
\begin{equation}\label{cond:U+}
(\rho_+, u_+) \in \Omega_{sub}^- \cup  \Gamma_{trans}^-.
\end{equation}

For a fixed $(\rho_+, u_+) \in \Omega_{sub}^- \cup \Gamma_{trans}^-$ and a given boundary velocity $u_- \in (u_+, 0)$, the left state $\rho_-$ and the shock speed $\sigma$ are uniquely prescribed by the Rankine-Hugoniot (RH) conditions:
\begin{equation}
\begin{aligned}\label{RH}
&-\sigma (\rho_+ - \rho_-) + (\rho_+ u_+ - \rho_- u_-) = 0,\\
&-\sigma (\rho_+ u_+ - \rho_- u_-) + (\rho_+ u_+^2 - \rho_- u_-^2 + p(\rho_+) - p(\rho_-)) = 0,
\end{aligned} 
\end{equation}
along with the Lax entropy condition 
\begin{equation} \label{lax}
    \lambda_2(\rho_+,u_+) < \sigma < \lambda_2(\rho_-,u_-).
\end{equation} Consequently, the 2-shock speed $\sigma$ is strictly positive and given by:
\[
\sigma = u_+ + \sqrt{\frac{\rho_-}{\rho_+}}\sqrt{\frac{p(\rho_+) - p(\rho_-)}{\rho_+ - \rho_-}} > 0.
\]

The corresponding viscous-dispersive 2-shock profile $(\tilde{\rho}, \tilde{u})(\xi)$, defined on $\mathbb{R}$, with the traveling wave variable $\xi = x - \sigma t$ is governed by the following ODE system:
\begin{equation}
\begin{aligned}\label{eq:VCshock}
&\begin{cases}
-\sigma \tilde{\rho}' + (\tilde{\rho} \tilde{u})' = 0,  \qquad  ':= \frac{d}{d\xi},\\
-\sigma (\tilde{\rho} \tilde{u})' +(\tilde{\rho} \tilde{u}^2 + p(\tilde{\rho}))' = \mu \tilde{u}'' + \kappa K(\tilde{\rho})',\\
(\tilde{\rho}, \tilde{u})(\pm\infty)=(\rho_\pm, u_\pm). 
\end{cases}\\
\end{aligned} 
\end{equation}

\subsection{Main Results} 
We now state our main theorem regarding the global well-posedness and the time-asymptotic stability of the outflow problem \eqref{eq:NSK} toward the viscous-dispersive shock.

\begin{theorem}\label{thm:main}
For a given right-end state $(\rho_+, u_+) \in \mathbb{R}_+ \times \mathbb{R}$ satisfying $u_+ < 0$ and \eqref{cond:U+}, there exist positive constants $\delta_0$ and $\varepsilon_0$ such that the following holds:

For any boundary velocity $u_- < 0$ satisfying $u_+ < u_-$ and $|u_- - u_+| < \delta_0$, let $\rho_- > 0$ be the unique constant state such that $(\rho_-, u_-)$ satisfying \eqref{RH} and \eqref{lax} with $(\rho_+,u_+)$. Let $(\tilde{\rho}, \tilde{u})(x - \sigma t)$ be the viscous-dispersive 2-shock wave connecting $(\rho_-, u_-)$ to $(\rho_+, u_+)$ with $\tilde{\rho}(0)= (\rho_- + \rho_+)/2$.

Then, there exists $\beta > 0$ large enough, depending only on the shock strength $|u_- - u_+|$, such that the following holds: Let $(\rho_0, u_0)$ be any initial data satisfying
\begin{equation*}
\begin{aligned}
    \|(\rho_0, u_0) - (\rho_+, u_+)\|_{L^2(\beta, \infty)} + \|(\rho_0, u_0) - (\rho_-, u_-)\|_{L^2(0, \beta)} + \|(\partial_x \rho_0, \partial_x u_0)\|_{H^1(\mathbb{R}_+) \times L^2(\mathbb{R}_+)} < \varepsilon_0.
\end{aligned}
\end{equation*}
Then, there exists a Lipschitz continuous function $X:[0,\infty)\to\R$, called the shift, such that the outflow problem \eqref{eq:NSK} admits a unique global-in-time solution $(\rho, u)(t,x)$ satisfying 
\begin{equation*}
    \begin{aligned}
    &\rho(t,x) - \tilde{\rho}(x - \sigma t - X(t) - \beta) \in C([0,\infty);H^2(\mathbb{R}_+)), \quad (\rho - \tilde{\rho})_x \in L^2(0,\infty;H^2(\mathbb{R}_+)),\\
    &u(t,x) - \tilde{u}(x - \sigma t - X(t) - \beta) \in C([0,\infty);H^1(\mathbb{R}_+)), \quad (u - \tilde{u})_x \in L^2(0,\infty;H^1(\mathbb{R}_+)).
\end{aligned}
\end{equation*}

Moreover, the solution asymptotically converges to the viscous-dispersive shock wave, and the speed of shift tends to zero:
\begin{equation}\label{asym-U}
	\begin{aligned}
	&\lim_{t\to\infty} \|\rho(t,\cdot)-\tilde{\rho}(\cdot-\sigma t-X(t)-\beta)\|_{W^{1,\infty}(\R_+)}=0,\\	
	&\lim_{t\to\infty} \|u(t,\cdot)-\tilde{u}(\cdot-\sigma t-X(t)-\beta)\|_{L^{\infty}(\R_+)}=0,
	\end{aligned}
\end{equation}
\begin{equation*}
    \lim_{t \to \infty} |\dot{X}(t)| = 0.
\end{equation*}
\end{theorem}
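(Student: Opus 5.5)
The plan is the standard continuation argument: local existence combined with a uniform a priori estimate closed by the method of $a$-contraction with shifts. I would first move to Lagrangian mass coordinates adapted to the outflow boundary, or else keep Eulerian coordinates and work with the specific volume $v=1/\rho$ and the effective velocity $h=u-\mu(\ln\rho)_x$ or a capillarity-modified version of it. This transformation turns the viscous $u$-equation into a diffusion equation in $v$ and gives an $L^2$ relative entropy structure. Let $\tilde U^{X,\beta}(t,x)=(\tilde\rho,\tilde u)(x-\sigma t-X(t)-\beta)$. I would define the shift by the ODE
\[
\dot X(t)=-\frac{M}{\delta}\int_{\R_+} a\,\big[\partial_x\tilde\rho^{X,\beta}\,(\rho-\tilde\rho^{X,\beta})\cdots\big]\,dx,
\]
with the usual weight $a(\xi)=1+\frac{\lambda}{\delta}(p(\rho_-)-p(\tilde\rho(\xi)))$. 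Here $\delta=|u_--u_+|$ and $\lambda\sim\sqrt\delta$. The ODE is Lipschitz, so $X$ exists. The first preliminary step is the shock-profile estimates: $|\tilde\rho'|\sim\delta^2e^{-c\delta|\xi|}$, monotonicity of $\tilde\rho$ and $\tilde u$ for $\delta$ small (this needs $\kappa$ small relative to $\mu^2$, or a weak-shock argument), and bounds on higher derivatives. Since $\tilde\rho(\cdot-\sigma t-X-\beta)$ does not satisfy $\tilde\rho_x=0$ or $\tilde u=u_-$ at $x=0$, I would fix $\beta$ large so that all boundary mismatches are $O(\delta e^{-c\delta(\beta+\sigma t)})$. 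These decay integrably in time, and their total contribution is $O(e^{-c\delta\beta})$. I would also check that $X(t)\ge -\sigma t/2$, so the shock stays away from the boundary; this follows from $|\dot X|\lesssim \|U-\tilde U\|_{L^\infty}\ll\sigma$.

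The core step is the weighted relative entropy estimate for $\int a\,\eta(U|\tilde U^{X,\beta})$, including the capillary energy $\frac{\kappa}{2}|\partial_x(\rho-\tilde\rho)|^2$-type term. Differentiating in time gives $\dot X\,Y(U)+\mathcal B+\mathcal G+\text{boundary terms}$. I would then apply the sharp Poincaré-type inequality of Kang--Vasseur on the weighted variable $y=(p(\tilde\rho)-p(\rho_-))/(p(\rho_+)-p(\rho_-))$, exactly as in the NS case. Combined with the shift term $-\frac{\delta}{M}|\dot X|^2$, this absorbs the bad hyperbolic term $\mathcal B$ by the good terms (the $a'$ term and the diffusion). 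The dispersive term contributes pieces that are either exact derivatives, producing boundary terms, or lower order of size $\delta\cdot$ good terms. At $x=0$ the outflow condition $u_-<0$ makes the boundary flux $-u_-\,\eta(U|\tilde U)(t,0)\ge0$ have a good sign. This is where the hypothesis that $U_+$ is subsonic or transonic enters: $U_-$ is then supersonic-free and outflow. The Neumann condition $\rho_x(t,0)=0$ kills the Korteweg boundary terms up to the exponentially small $\tilde\rho_x(-\sigma t-X-\beta)$.

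Next come the higher-order estimates for $\partial_x(\rho-\tilde\rho)$ in $H^1$ and $\partial_x(u-\tilde u)$ in $L^2$, with dissipation, which give the regularity classes in the statement. The boundary terms here, such as $u_{x}(t,0)$, $\rho_{xx}(t,0)$ and $\rho_{xxx}(t,0)$, are the main obstacle. I would express them using the equations at $x=0$: the mass equation there, with $\rho_x=0$ and $u=u_-$, gives $\rho_t(t,0)=-\rho u_x(t,0)$. I would then control the rest by the trace inequality $|f(0)|^2\le 2\|f\|\|f_x\|$ together with the good-sign outflow terms, using the effective velocity to avoid the highest trace. Summing all the estimates with small weights yields
\[
\sup_t\|U-\tilde U\|^2_{H^2\times H^1}+\int_0^\infty\big(\delta|\dot X|^2+\mathcal G+D\big)dt\le C\big(\varepsilon_0^2+e^{-c\delta\beta}\big).
\]
Global existence follows by continuation. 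For the asymptotics, the standard argument applies: $g(t)=\|\partial_x(U-\tilde U)\|^2$ lies in $W^{1,1}(0,\infty)$, so $g\to0$, and Gagliardo--Nirenberg gives the $W^{1,\infty}$ and $L^\infty$ limits. Finally, $|\dot X|\lesssim\|(U-\tilde U)\|_{L^\infty}$ tends to $0$.
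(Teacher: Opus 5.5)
\textbf{Verdict: genuine gaps.} The zeroth-order part of your outline is close to the paper in spirit: weight, shift, the Kang--Vasseur Poincar\'e inequality (on the half-line it must be the version on $[y_0,1]$ with $y_0\lesssim e^{-c\delta\beta}$), the good sign of the boundary flux, and the boundary mismatches made time-integrable by $\sigma t+X(t)+\beta\ge \sigma t/2+\beta$. But the two steps that make this problem harder than the whole-line or Navier--Stokes cases are either unresolved or asserted incorrectly.

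\textbf{First gap: higher-order boundary terms.} You call these the main obstacle, and the tools you name do not handle them. Suppose you differentiate the equations in $x$ and test with $(u-\tilde u)_x$ and $(\rho-\tilde\rho)_{xx}$. Integration by parts then produces traces such as $(u-\tilde u)_{xx}(t,0)$ and $\rho_{xxx}(t,0)$. The inequality $|f(0)|^2\le 2\|f\|\|f_x\|$ would need $u-\tilde u\in L^2_tH^3$ and $\rho-\tilde\rho\in L^2_tH^4$, which lie outside the solution class $L^2_tH^2\times L^2_tH^3$. The mass-equation identity $\rho_t(t,0)=-\rho u_x(t,0)$ does not remove these traces.

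The paper avoids them as follows. Write $w=\sqrt{\kappa/\rho}\,\rho_x$, $\psi=u-\tilde u$, $\omega=w-\tilde w$. The Korteweg terms then become the skew pair $\sqrt\kappa(\rho^{3/2}\omega_x)_x$ and $-\sqrt\kappa(\rho^{3/2}\psi_x)_x$. Instead of differentiating, multiply the $\psi$- and $\omega$-equations by $-\psi_{xx}/\rho$ and $-\omega_{xx}/\rho$. The third-order terms cancel pointwise with no integration by parts, and the only boundary term is $-(\psi_t\psi_x+\omega_t\omega_x)(t,0)$. Because the data $u(t,0)=u_-$ and $w(t,0)=0$ are time-independent, $\psi_t(t,0)=-\tilde u_t(t,0)$ and $\omega_t(t,0)=-\tilde w_t(t,0)$ are $O(\delta^2e^{-c\delta(\sigma t+\beta)})$. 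Only first-derivative traces remain, and these are controlled by Sobolev and the dissipation.

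\textbf{Second gap: dissipation for the capillarity part.} The dispersive terms are neither exact derivatives nor $\delta$ times good terms in the weighted estimate. The skew cancellation holds only without the weight. Since $a_x\neq 0$, terms such as $-\sqrt\kappa\int a_x\rho^{3/2}(u-\tilde u)(w-\tilde w)_x\,dx$ survive, and they involve $(w-\tilde w)_x\sim(\rho-\tilde\rho)_{xx}$. The system has no diffusion on $\rho$ or $w$, so the relative entropy gives no control of this quantity. The paper bounds these terms by $C\delta^{3/4}\|(w-\tilde w)_x\|_{L^2}^2$ and closes only after two cross-estimates:
\begin{itemize}
\item $G_{w_1}+D_{w_1}$ from $\frac{d}{dt}\int\psi\omega\,dx$, with multipliers $\omega/\rho$ and $\psi/\rho$;
\item $G_{w_2}+D_{w_2}$ from $\frac{d}{dt}\int\psi_x\omega_x\,dx$, with multipliers $-\omega_{xx}/\rho$ and $-\psi_{xx}/\rho$.
\end{itemize}
These are added to the $H^1$ estimate with suitably small weights.

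In your plan, the only possible source of $\rho_x$- and $\rho_{xx}$-dissipation is the effective velocity. You leave it undeveloped and state it incorrectly. For $\mu u_{xx}$ in Eulerian coordinates it is $h=u+\mu\rho_x/\rho^2$, not $u-\mu(\ln\rho)_x$. The correct $h$ does satisfy $h(t,0)=u_-$ because $\rho_x(t,0)=0$, which helps your route. You would still need to show how the weighted, shifted estimate in $(\rho,h)$ produces $\kappa\mu$-type dissipation of $\rho_{xx}$, given that the transport velocity is still $u$ and not $h$.

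\textbf{Smaller points.}
\begin{itemize}
\item The Lagrangian option turns the outflow problem into a free-boundary problem, which is why the paper stays Eulerian.
\item The good sign of the boundary flux comes from $u_-<0$ alone. The subsonic/transonic hypothesis is what gives $\sigma>\lambda_2(\rho_+,u_+)\ge 0$, so that the shock separates from $x=0$.
\end{itemize}
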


\subsection{Main ideas of the proof}
The proof of Theorem \ref{thm:main} is based on the method of $a$-contraction with shifts. Although this method is technically more involved in Lagrangian mass coordinates than in Eulerian coordinates, we adopt the Eulerian framework since the former leads to a free boundary problem, see \cite[Section 4]{KNZ03}. To apply this strategy to the NSK equations, we proceed with the following two main steps.

First, to handle the higher-order Korteweg term effectively and to ensure the compatibility of the boundary conditions, we reformulate the NSK equations \eqref{eq:NSK} into an augmented system of conservation laws \eqref{eq:augmented-sys} by introducing an auxiliary capillarity variable $w := \sqrt{\kappa/\rho}\rho_x$. A key step in our analysis is utilizing this augmented formulation to perform the weighted relative entropy estimates directly in the Eulerian coordinates. 
We then construct a weight function $a(t,x)$ and a dynamical shift $X(t)$ to control the bad terms localized by a shock profile.

Second, to close the a priori estimates, we need to recover the dissipation terms for the capillarity variable $w$. This is achieved by deriving higher-order energy estimates together with delicate cross-estimates (e.g., between the velocity and capillarity perturbations), which provide the necessary dissipations without requiring additional boundary conditions (See Lemma~\ref{lem:higher2}).

The rest of this paper is organized as follows. In Section~\ref{sec:prelim}, we recall the properties of viscous-dispersive shock profiles and reformulate the NSK equations into the augmented system. In Section~\ref{sec:apriori}, we state the local existence of solutions and construct the weight function and the dynamical shift to be used throughout the paper. We then present the a priori estimates, which lead to the proof of Theorem~\ref{thm:main} via a continuation argument. Section~\ref{sec:L2_energy} provides the proof of $L^2$-estimates measured by a weighted relative entropy for the augmented system. 
In Section~\ref{sec:higher_order}, we derive the higher-order energy estimates for the perturbations. In particular, we perform cross-estimates to recover dissipations for the capillarity variable, thereby closing the global energy estimates. Finally, in Appendix~\ref{sec:app-A}, we provide the technical details for the time-asymptotic convergence toward the shifted viscous-dispersive shock wave.

\section{Preliminaries}\label{sec:prelim}
\setcounter{equation}{0}
In this section, we gather some preliminary contents on the viscous-dispersive shock wave and the extended system for the NSK equations.

\subsection{Viscous-dispersive shock profiles}
First of all, we recall the existence and properties of the viscous-dispersive shock wave for the NSK equations, where the shock strength is small.

\begin{lemma}\label{lem:shock-profile}\cite{HKKL25-JDE}
For any given state $(\rho_+, u_+)$, there exist positive constants $\delta_0, c,$ and $C$ such that the following holds. For any left-end state $(\rho_-, u_-)$ satisfying \eqref{RH}, \eqref{lax} and $\delta := |u_- - u_+|\sim |\rho_- - \rho_+| \leq \delta_0$, there exists a unique solution $(\tilde{\rho}, \tilde{u})(\xi) = (\tilde{\rho}, \tilde{u})(x-\sigma t)$ to \eqref{eq:VCshock} with $\tilde{\rho}(0)=\frac{\rho_- + \rho_+}{2}$.

Furthermore, the profile satisfies the monotonicity properties
\[
    \tilde{u}_\xi < 0, \quad \tilde{\rho}_\xi < 0,
\]
and the following decay estimates:
\begin{align*}
    &|\tilde{u}(\xi) - u_\pm| \leq C\delta e^{-c\delta |\xi|}, \quad \text{for } \pm \xi > 0, \\
    &|(\tilde{\rho}_\xi(\xi), \tilde{u}_\xi(\xi))| \leq C\delta^2 e^{-c\delta |\xi|}, \quad \forall \xi \in \mathbb{R}, \\
    &|(\tilde{\rho}_{\xi \xi}(\xi), \tilde{u}_{\xi \xi}(\xi))| \leq C\delta |(\tilde{\rho}_\xi(\xi), \tilde{u}_\xi(\xi))|, \quad \forall \xi \in \mathbb{R}.
\end{align*}
\end{lemma}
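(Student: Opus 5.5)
The plan is to reduce \eqref{eq:VCshock} to a planar ODE system for $\tilde\rho$ and analyze it as a small-amplitude heteroclinic connection. Integrating the first equation from $\pm\infty$ gives $\tilde\rho(\tilde u-\sigma)=\rho_\pm(u_\pm-\sigma)=:-m$. Since $\sigma>0>u_\pm$, the mass flux $m$ is positive, so $\tilde u=\sigma-m/\tilde\rho$ and $\tilde u'=m\tilde\rho'/\tilde\rho^2$. Integrating the momentum equation once and using the Rankine--Hugoniot conditions \eqref{RH} gives
\[
\kappa\Big(\tilde\rho\tilde\rho''-\tfrac12(\tilde\rho')^2\Big)=-\mu\frac{m}{\tilde\rho^2}\tilde\rho'+g(\tilde\rho),
\qquad g(\rho):=m^2\Big(\frac1\rho-\frac1{\rho_\pm}\Big)+p(\rho)-p(\rho_\pm).
\]
Here $g$ vanishes exactly at $\rho_-$ and $\rho_+$ in the relevant range. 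It is the derivative of a potential that is convex for small $\delta$, being generically a cubic-like double well between the two roots.

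The resulting system is a damped Newton-type equation $\kappa\tilde\rho\tilde\rho''=-\mu m\tilde\rho^{-2}\tilde\rho'+\dots$, i.e.\ a particle in a potential with positive friction. The steps are as follows.
\begin{enumerate}
\item Linearize at the two equilibria. Using the Lax condition \eqref{lax}, one endpoint is a saddle and the other is a stable node or focus.
\item Show that the node is in fact a node. The linearization at the stable point has eigenvalues solving $\kappa\rho\lambda^2+\mu m\rho^{-2}\lambda-g'(\rho)=0$ with $|g'|=O(\delta)$. For $\delta$ small the discriminant $\mu^2m^2\rho^{-4}+4\kappa\rho g'$ is positive, so both eigenvalues are real, one $O(1)$ and one $O(\delta)$. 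This rules out oscillation.
\item Rescale $\xi\mapsto\delta\xi$ and $\rho-\rho_+\mapsto\delta\,\cdot$, in the spirit of center-manifold or Lyapunov--Schmidt reductions. The leading dynamics then lie on a one-dimensional slow manifold, where the equation reduces to a Burgers-type ODE $\mu m\rho^{-2}\rho'\approx g(\rho)$, whose solution is a monotone tanh-like profile. Persistence of the transversal heteroclinic under the fast $O(1)$ direction follows from Fenichel theory or an implicit function argument.
\item Read off the bounds. Monotonicity ($\tilde\rho'<0$, hence $\tilde u'<0$ from $\tilde u'=m\tilde\rho'/\tilde\rho^2$ and the orientation $\rho_->\rho_+$) holds because on the slow manifold $\rho'$ has the sign of $g$ for $\rho$ between the roots. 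The decay rates follow from the $O(\delta)$ slow eigenvalue at both ends. This gives $|\tilde u-u_\pm|\le C\delta e^{-c\delta|\xi|}$ and $|\tilde\rho'|\le C\delta^2e^{-c\delta|\xi|}$, since $g=O(\delta^2)$ on the profile.
\item Get the second-derivative bound $|\tilde\rho''|\le C\delta|\tilde\rho'|$ by differentiating the slow-manifold graph $\rho'=h(\rho)$ with $|h'|=O(\delta)$.
\item Fix uniqueness by translation invariance together with the normalization $\tilde\rho(0)=(\rho_-+\rho_+)/2$.
\end{enumerate}

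The main obstacle is making the slow-manifold reduction rigorous and uniform in $\delta$. The difficulty is that the fast direction must not destroy monotonicity. An alternative I would try is a direct phase-plane argument: a Lyapunov function $E=\tfrac{\kappa}{2\tilde\rho}(\tilde\rho')^2-\int g/\tilde\rho$, for which friction gives $E'\le 0$, combined with an invariant-region argument showing that the unstable manifold from the saddle enters and stays in $\{\rho_+<\rho<\rho_-,\ \rho'<0\}$.
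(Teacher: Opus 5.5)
This is a correct argument by a different route. The paper gives no proof of this lemma: it imports it from \cite{HKKL25-JDE}, where the profile is built in Lagrangian mass coordinates for $v=1/\rho$. The subsequent remark only asserts that passing to $(\rho,u)$ is straightforward. That passage uses three facts, so that monotonicity and the $e^{-c\delta|\xi|}$ bounds transfer and changing the normalization costs only a bounded translation:
\begin{itemize}
\item the two traveling-wave variables are related by $d\zeta=\tilde\rho\,d\xi$;
\item $\tilde\rho_\xi=-\tilde\rho^{3}\tilde v_\zeta$;
\item $\rho_+\le\tilde\rho\le\rho_-$.
\end{itemize}

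You instead work directly with the Eulerian profile ODE. Your once-integrated equation $\kappa K(\tilde\rho)=-\mu m\tilde\rho^{-2}\tilde\rho'+g(\tilde\rho)$ is correct and agrees with \eqref{eq:int-mom} up to the sign convention for $m$. You then use the saddle/node classification from \eqref{lax}, the discriminant argument excluding a focus at $\rho_+$ for small $\delta$, and a slow-manifold reduction to a Burgers-type scalar ODE. The paper's route is short and inherits uniqueness and $\delta$-uniform constants from an existing proof. Yours is self-contained and shows why weak profiles are monotone rather than oscillatory. It also shows where $|\tilde\rho''|\le C\delta|\tilde\rho'|$ comes from: $\tilde\rho''=h'(\tilde\rho)\tilde\rho'$ with $h'=O(\delta)$. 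As a small aside, $m>0$ holds for any 2-shock, since $\sigma>\lambda_2(\rho_+,u_+)>u_+$, not only when $\sigma>0>u_\pm$.

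\textbf{Points to tighten.}
\begin{itemize}
\item \emph{Which function is convex.} It is $g$ itself that is convex, with $g''=2m^2\rho^{-3}+p''(\rho)>0$. Its primitive is cubic-like (one hill, one well), not convex and not a double well. Convexity of $g$ is what makes $\rho_\pm$ the only equilibria. Together with $g'(\rho_+)<0<g'(\rho_-)$, which follows from \eqref{lax}, it gives $\rho_+<\rho_-$. Your sign argument on the slow manifold needs both facts, since $h$ must have no zero strictly between $\rho_+$ and $\rho_-$.
\item \emph{Uniformity in $\delta$.} Adjoin $\delta'=0$ and take the center manifold at $(\rho_+,0,\delta=0)$, where the eigenvalues are $0$ and $-\mu m/(\kappa\rho_+^3)$. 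Write $h(\rho,\delta)=-(\rho-\rho_+)(\rho_--\rho)k(\rho,\delta)$ with $k$ between two positive constants. A logistic comparison then yields the global bounds with $\delta$-independent $C,c$. The slow eigenvalues alone only give the rates as $|\xi|\to\infty$.
\item \emph{Uniqueness.} You need that the heteroclinic orbit is unique as a set. This holds because the saddle at $\rho_-$ has a one-dimensional unstable manifold, and its outward branch ($\rho>\rho_-$) cannot reach $\rho_+$.
\item \emph{Lyapunov function.} The potential needs the weight $s^{-2}$. With $E=\frac{\kappa}{2\tilde\rho}(\tilde\rho')^2-\int^{\tilde\rho}g(s)s^{-2}\,ds$ one gets $E'=-\mu m\tilde\rho^{-4}(\tilde\rho')^2\le 0$. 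With $\int g/\tilde\rho$ the leftover term $g\tilde\rho'(\tilde\rho^{-2}-\tilde\rho^{-1})$ does not cancel. The corrected $E$ also proves the branch claim: re-crossing $\rho=\rho_-$ with $\tilde\rho'\neq 0$ would force $E$ above its value at $\xi=-\infty$.
\end{itemize}
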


\begin{remark}
	Although the precise statement in \cite{HKKL25-JDE} is given in terms of the specific volume $v$ and the velocity $u$ in the Lagrangian mass coordinate, it is straightforward to adapt the existence and properties of viscous-dispersive shock wave in terms of $(\rho,u)$.
\end{remark}

\begin{remark}
    By Lemma \ref{lem:shock-profile} and the Lax entropy condition \eqref{lax}, the velocity component of the viscous-dispersive 2-shock with a small amplitude satisfies
    \begin{equation*}
    |\sigma -\tilde{u}(\xi) - \sqrt{p'(\tilde{\rho}(\xi))} | \le C \delta,
    \end{equation*} 
    and consequently $\sigma-\tilde{u}(\xi)>0$ for all $\xi\in \R$.
\end{remark}

\subsection{Augmented system and boundary conditions}
As we mentioned in the introduction, the main tool for obtaining stability is based on the relative entropy estimate. To exploit the relative entropy estimate and to handle the capillarity tensor, we rewrite the NSK equations \eqref{eq:NSK} by introducing a new variable:
\begin{equation}\label{def:w}
    w := \sqrt{\frac{\kappa}{\rho}} \rho_x,
\end{equation}
which is motivated by \cite{BDD06,HKKL25-JDE}. Using this variable, the capillarity tensor can be rewritten as
\[
    \kappa K(\rho) = \kappa\left(\rho\rho_{xx} - \frac{1}{2}(\rho_x)^2\right) = \sqrt{\kappa} \rho^{3/2}w_x.
\]
Differentiating the continuity equation \eqref{eq:NSK}$_1$ with respect to $x$ and utilizing \eqref{def:w}, we find that $w$ satisfies:
\[
    (\rho w)_t + (\rho u w)_x = -\sqrt{\kappa} (\rho^{3/2}u_x)_x.
\]
Consequently, the original NSK equations \eqref{eq:NSK} can be reformulated as the following augmented system in the standard form of viscous conservation law:
\begin{align}\label{eq:augmented-sys}
\begin{aligned}
    &\rho_t + (\rho u)_x = 0, \\
    &(\rho u)_t + (\rho u^2 + p(\rho))_x =\mu u_{xx} + \sqrt{\kappa} (\rho^{3/2} w_x)_x, \\
    &(\rho w)_t + (\rho u w)_x = -\sqrt{\kappa} (\rho^{3/2} u_x)_x.
\end{aligned}
\end{align}

By expressing the augmented system in this standard form, we can exploit its underlying structural properties. In particular, this formulation enables us to utilize the standard theory of relative entropy method in Lemma \ref{lem:weighted-rel-entropy} and provides a cancellation between the second-order terms for $u$ and $w$, which plays a key role in closing the $H^1$-energy estimates (see \eqref{eq:higher-order-1}). Furthermore, the boundary condition \eqref{eq:BC-capillarity} can be expressed in terms of $w$ as a simple Dirichlet boundary condition $w(t,0)=0$. Similarly, we define the auxiliary variable for the shock profile as $\tilde{w}:=\sqrt{\frac{\kappa}{\tilde{\rho}}}\tilde{\rho}_\xi$.

\section{A Priori Estimates and Proof of Theorem \ref{thm:main}} \label{sec:apriori}
\setcounter{equation}{0}

To prove the global existence and time-asymptotic stability of the viscous-dispersive shock in Theorem \ref{thm:main}, we combine the local well-posedness with an a priori energy estimates.

\subsection{Local existence of solutions} 
First, we state the local-in-time existence of the outflow problem for the NSK equations. 
The proof of local existence is based on standard iteration argument and is thus omitted (see, for example, \cite{HKKL25-JMAA}).

\begin{proposition}[Local existence]\label{prop:loc} 
    For any constant $\beta > 0 $, let $\underline{\rho}$ and $\underline{u}$ be smooth monotone functions such that
    \[
    (\underline{\rho}(x), \underline{u}(x)) = (\rho_+, u_+), \quad \text{for } x \geq \beta, \quad \underline{\rho}(0) > 0.
    \]
    Given any positive constants $M_0, M_1, \underline{\kappa}_0, \overline{\kappa}_0,  \underline{\kappa}_1, \overline{\kappa}_1$ satisfying $M_0 < M_1$ and $\underline{\kappa}_1 < \underline{\kappa}_0 < \overline{\kappa}_0 < \overline{\kappa}_1$, there exists a time $T_0 > 0$ such that if the initial data satisfies
    \begin{align*}
        &\norm{\rho_0 - \underline{\rho}}_{H^2(\mathbb{R}_+)} + \norm{u_0 - \underline{u}}_{H^1(\mathbb{R}_+)} \leq M_0,\\
        &0 < \underline{\kappa}_0 \leq \rho_0(x) \leq \overline{\kappa}_0, \quad \text{for all } x \in \mathbb{R}_+,
    \end{align*}
    then the outflow problem \eqref{eq:NSK}-\eqref{eq:BC-outflow}-\eqref{eq:BC-capillarity} has a unique strong solution $(\rho, u)$ on $[0,T_0]$ satisfying:
\begin{equation*}
\begin{aligned}
    &\rho - \underline{\rho} \in C([0,T_0];H^2(\mathbb{R}_+)) \cap L^2(0,T_0;H^3(\mathbb{R}_+)),\\
    &u - \underline{u} \in C([0,T_0];H^1(\mathbb{R}_+)) \cap L^2(0,T_0;H^2(\mathbb{R}_+)),
\end{aligned} 
\end{equation*}
and
\[
    \norm{(\rho - \underline{\rho})(t,\cdot)}_{H^2(\mathbb{R}_+)} + \norm{(u - \underline{u})(t,\cdot)}_{H^1(\mathbb{R}_+)} \leq M_1, \quad \forall t \in [0, T_0].
\]
Moreover, the solution satisfies
\[
    0<\underline{\kappa}_1 \leq \rho(t,x) \leq \overline{\kappa}_1, \quad u(t,0) = u_-, \quad \rho_x(t,0) = 0, \quad \forall t \in (0, T_0], \quad x \in \mathbb{R}_+.
\]
\end{proposition}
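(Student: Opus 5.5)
We would prove Proposition~\ref{prop:loc} by a standard linearization and contraction argument for the parabolic--dispersive system, working with perturbations of the reference profile $(\underline{\rho},\underline{u})$. Set $\phi=\rho-\underline{\rho}$ and $\psi=u-\underline{u}$, and lift the boundary data. Since $\underline{u}$ is only required to equal $u_+$ for $x\ge\beta$, we additionally choose it so that $\underline{u}(0)=u_-$ and $\underline{\rho}_x(0)=0$. This is compatible with monotonicity and smoothness, and such a modification changes $M_0$ only by a constant depending on $\beta$. With this choice the boundary conditions for the perturbation become homogeneous: $\psi(t,0)=0$ and $\phi_x(t,0)=0$.

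Since $u(t,0)=u_-<0$, the continuity equation is a transport equation whose characteristics exit the domain at $x=0$. Hence no boundary condition for $\rho$ is needed or allowed beyond the Neumann condition $\rho_x(t,0)=0$ coming from capillarity. We exploit the structure of the system as follows:
\begin{itemize}
\item the momentum equation is parabolic in $u$ with Dirichlet data;
\item the term $\kappa\rho\rho_{xxx}$ couples the two equations;
\item differentiating the mass equation shows that $\rho_x$ is dissipated through the coupling.
\end{itemize}
This coupling is most transparent in the augmented system \eqref{eq:augmented-sys}, where $w$ satisfies the Dirichlet condition $w(t,0)=0$.

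The iteration is set up as follows. Given $(\hat\rho,\hat u)$ in the ball
\[
X_{T}=\{\|\hat\rho-\underline{\rho}\|_{C_TH^2\cap L^2_TH^3}+\|\hat u-\underline{u}\|_{C_TH^1\cap L^2_TH^2}\le M_1,\ \underline{\kappa}_1\le\hat\rho\le\overline{\kappa}_1\},
\]
we solve a linear problem: the transport equation $\rho_t+\hat u\rho_x+\hat\rho\hat u_x\ldots$ by characteristics, and the linear parabolic equation for $u$ with coefficient $\hat\rho$ and forcing $\kappa\hat\rho\rho_{xxx}$. A cleaner alternative is to treat the linear hyperbolic--parabolic--dispersive system for $(\rho,u)$ simultaneously via Galerkin approximation with the energy functional below.

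The core of the argument is the a priori energy estimate. We test the momentum equation against $u-\underline{u}$, $-(u-\underline{u})_{xx}$, and the mass equation together with its first and second $x$-derivatives against $\phi$, $\phi_x$, $\phi_{xx}$, weighted by $\kappa\rho$. The key cancellation is the standard Korteweg one: $\int\kappa\rho\rho_{xxx}u$ combines with $\int\kappa\rho_x(\rho u)_{xx}$ terms, so that the top-order terms cancel. The boundary terms at $x=0$ must then be checked:
\begin{itemize}
\item terms containing $u-\underline{u}$ vanish by the Dirichlet condition;
\item terms containing $\rho_x$ vanish by the Neumann condition;
\item the transport boundary flux $-\tfrac12 u_-|\phi_{xx}(t,0)|^2\ge0$ has a good sign, precisely because $u_-<0$ (outflow).
\end{itemize}
The remaining terms are bounded by $C(M_1,\underline\kappa_1)\sqrt{T}$-type quantities. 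Gronwall's inequality then gives
\[
\|\phi\|_{H^2}^2+\|\psi\|_{H^1}^2\le M_0^2+C(M_1)T,
\]
which is at most $M_1^2$ for $T_0$ small. The density bounds follow from $\|\phi_t\|_{L^\infty}\le C(M_1)$ and the Sobolev embedding. Lipschitz dependence in the lower norm $C_TL^2\times C_TL^2$ (with the $L^2_TH^1$ dissipation for $u$) follows from the same energy computation for differences, which yields a contraction for small $T_0$. A weak-limit argument then recovers the top regularity, and uniqueness follows from the difference estimate.

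The main obstacle is the higher-order boundary terms. When the equation is differentiated twice in $x$, the boundary traces of $\phi_{xxx}$ and $u_{xx}$ appear, and they must be rewritten using the equations at $x=0$. For example, $\phi_{xt}(t,0)=0$ gives a relation among $\rho u_{xx}$ and the boundary traces. We would handle this by estimating $\rho_{xx}$ through $w_x$ in the augmented system rather than differentiating directly: we test the $w$-equation with $w$, and the $u$-equation with $u$. Then $\int\sqrt\kappa(\rho^{3/2}w_x)_xu+\int-\sqrt\kappa(\rho^{3/2}u_x)_xw$ cancels exactly after integration by parts, with boundary terms killed by $u-\underline u=0$ and $w=0$ at $x=0$. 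This is the same mechanism the paper cites for \eqref{eq:higher-order-1}, and it avoids any uncontrolled traces.
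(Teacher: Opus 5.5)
The paper does not prove this proposition; it only invokes a ``standard iteration argument'' and cites a Navier--Stokes paper. So the benchmark is the energy structure of Section~\ref{sec:higher_order}. Measured against it, your scheme has a genuine gap at the top order of $\rho$, i.e.\ in the estimate of $\|\rho_{xx}\|_{L^2}$.

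In your original-variable computation the interior coupling does cancel. But it cancels only after integrating $\kappa\int\alpha\rho\,\psi_{xxx}\phi_{xx}\,dx$ by parts, where $\kappa\alpha\phi_{xx}$ is your multiplier. This leaves the trace $-\kappa\alpha\rho\,\psi_{xx}\phi_{xx}|_{x=0}$, which is killed neither by $\psi(t,0)=0$ nor by $\phi_x(t,0)=0$.

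The relation you mention yourself settles its sign. From $\rho_{xt}(t,0)=0$ in the differentiated mass equation, $\rho u_{xx}=-u_-\rho_{xx}$ at $x=0$. Hence this trace equals $\kappa\alpha u_-|\phi_{xx}(t,0)|^2$ plus lower order. That is twice the size of your outflow flux $-\frac{\kappa\alpha}{2}u_-|\phi_{xx}(t,0)|^2$ and of the opposite sign, so the net boundary contribution is anti-dissipative.

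The augmented variables do not remove this. Set $\omega:=w-\underline{w}$ with $\underline{w}:=\sqrt{\kappa/\underline{\rho}}\,\underline{\rho}_x$, so that $\omega(t,0)=\omega_t(t,0)=0$. Testing as in \eqref{eq:higher-order-1}, the coupling cancels pointwise, but the transport term causes trouble either way:
\begin{itemize}
\item integrated by parts, it gives $\frac{1}{2}\frac{d}{dt}\|\omega_x\|_{L^2}^2=\frac{1}{2}|u_-|\,|\omega_x(t,0)|^2+\cdots$, the effect of a Dirichlet condition on an outgoing transported quantity;
\item left as is, the term $\int u\,\omega_x\omega_{xx}\,dx$ requires $\|\omega_{xx}\|_{L^2}$, essentially $\|\rho_{xxx}\|_{L^2}$.
\end{itemize}
Because the $\kappa$-coupling is skew, no estimate that tests each equation against derivatives of its own unknown dissipates $\omega_{xx}$; the only dissipation produced is $\mu\|\psi_{xx}\|_{L^2}^2$. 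Your remedy, testing the $w$-equation with $w$ and the $u$-equation with $u$, is only the $L^2$-level estimate and controls $\rho_x$, not $\rho_{xx}$. For the same reason nothing in your argument yields $\rho-\underline{\rho}\in L^2(0,T_0;H^3)$, so your ``weak-limit argument'' has no bound to pass to the limit in.

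The missing idea is the cross estimate behind \eqref{est-higher-3}. Multiply the $u$-equation by $-\omega_{xx}/\rho$ and the $w$-equation by $-\psi_{xx}/\rho$. This gives $\frac{d}{dt}\int\psi_x\omega_x\,dx+\sqrt{\kappa}\int\rho^{1/2}|\omega_{xx}|^2\,dx\le C\|\psi_{xx}\|_{L^2}^2+\cdots$. The transport terms are handled by Young's inequality without integrating by parts, and the boundary terms $\psi_t\omega_x$, $\omega_t\psi_x$, $\omega\omega_x$ at $x=0$ vanish for your lifted reference. Add a small multiple of this to the estimate of Lemma~\ref{lem:higher2}, with $\tau$ chosen accordingly. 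This absorbs $\int u\,\omega_x\omega_{xx}\,dx$ and yields the $L^2_tH^3$ bound. In your original-variable version it also makes the trace harmless, since $|\phi_{xx}(t,0)|^2\le C\|\phi_{xx}\|_{L^2}\|\phi_{xxx}\|_{L^2}$.

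Three smaller points also need fixing.
\begin{itemize}
\item Your primary iteration (transport for $\rho$ by characteristics, then a parabolic equation for $u$ forced by $\kappa\hat\rho\rho_{xxx}$) loses derivatives. Transport does not smooth $\rho$, so $\rho_{xxx}\notin L^2_{t,x}$ and the map does not send your ball into itself. The capillarity must stay in the principal part, as in your ``alternative'' of solving the linearized coupled system. In the variables $(u,w)$ of \eqref{eq:augmented-sys} that principal part is a parabolic system with Dirichlet data for both unknowns.
\item The difference estimate cannot close in $C_TL^2\times C_TL^2$ because of the Korteweg term. The natural norm is $\bar\rho\in H^1$, $\bar u\in L^2$, i.e.\ $(\bar\rho,\bar u,\bar w)\in L^2$.
\item Your lifting tacitly uses $u_0(0)=u_-$ and $\rho_{0x}(0)=0$. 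These compatibility conditions are forced by the claimed continuity in time with values in $H^1$ and $H^2$, but they are absent from the statement and should be made explicit.
\end{itemize}
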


\subsection{Construction of the weight and dynamical shift}
For simplicity, we use the following notations if there is no confusion:
\begin{align*}
    \tilde{\rho}(t,x) &:= \tilde{\rho}^{X,\beta}(t,x) = \tilde{\rho}(x-\sigma t - X(t) - \beta),\\
    \tilde{u}(t,x) &:= \tilde{u}^{X,\beta}(t,x) = \tilde{u}(x-\sigma t - X(t) - \beta),\\
    \tilde{w}(t,x) &:= \tilde{w}^{X,\beta}(t,x) = \tilde{w}(x-\sigma t - X(t) - \beta),
\end{align*}
where the shift function $X(t)$ and the constant $\beta$ will be chosen later. Then, we introduce a weight function $a(t,x) = a^{X,\beta}(t,x)$ as
\begin{equation}\label{def:a}
    a(t,x) := a^{X,\beta}(t,x) = 1 +\sqrt{\delta} + \frac{u_+ - \tilde{u}(t,x)}{\sqrt{\delta}}.
\end{equation}
Since the shock profile $\tilde{u}$ is monotonically decreasing from $u_-$ to $u_+$, the weight satisfies $1 \leq a \leq 1+ \sqrt{\delta} \leq \frac{3}{2}$ and
\begin{equation*}
    a_x(t,x) = -\frac{\tilde{u}_x(t,x)}{\sqrt{\delta}} > 0.
\end{equation*}
Now, we define the dynamical shift $X(t)$ as the solution of the following ODE:
\begin{equation}\label{def:X}
    \begin{aligned}
        &\dot{X}(t) = -\frac{M}{\delta}\left[\int_{\mathbb{R}_+} a^{X,\beta} \frac{p'(\tilde{\rho}^{X,\beta})}{\sigma -\tilde{u}^{X,\beta}}\tilde{\rho}^{X,\beta}_x(u-\tilde{u}^{X,\beta})\,dx + \int_{\mathbb{R}_+} a^{X,\beta} \tilde{\rho}^{X,\beta} (u - \tilde{u}^{X,\beta})\tilde{u}^{X,\beta}_x\,dx\right],\\
        &X(0)=0.
    \end{aligned}
\end{equation}
Here, $M = \frac{2(\gamma+1)}{\rho_+}$, and $\beta>0$ is a sufficiently large constant to be chosen later. The existence and Lipschitz continuity of $X(t)$ are proved in \cite{KVW23} by using standard Cauchy--Lipschitz theory.

\subsection{A priori energy estimates}

Now, we present the main proposition for proving Theorem \ref{thm:main}, which provides the a priori estimates.

\begin{proposition}\label{prop:apriori}

    For a given right-end state $(\rho_+, u_+) \in \mathbb{R}_+ \times \mathbb{R}$ satisfying $u_+<0$ and \eqref{cond:U+}, there exist positive constants $C_0, c_0, \delta_0$, and $\e_0$ such that the following holds.
    
    Let $u_- \in (u_+, 0)$ be the outflow velocity with shock strength $\delta := |u_- - u_+| < \delta_0$. Let $(\tilde{\rho}, \tilde{u})$ be the viscous-dispersive 2-shock profile connecting $(\rho_-, u_-)$ and $(\rho_+, u_+)$ with $\tilde{\rho}(0) = (\rho_- + \rho_+)/2$. Then, there exists a sufficiently large $\beta>0$, depending only on $\delta$, such that if $(\rho, u)$ is a strong solution to \eqref{eq:NSK} on $[0,T]$ satisfying
    \begin{equation*}
    \begin{aligned}
    &\rho - \tilde{\rho}^{X, \beta} \in C([0,T];H^2(\mathbb{R}_+)) \cap L^2(0,T;H^3(\mathbb{R}_+)),\\
    &u - \tilde{u}^{X, \beta} \in C([0,T];H^1(\mathbb{R}_+)) \cap L^2(0,T;H^2(\mathbb{R}_+)),
    \end{aligned} 
    \end{equation*}
    and
    \begin{equation}\label{p-assum}
    \sup_{t \in [0,T]} \Big( \| \rho - \tilde{\rho}^{X, \beta} \|_{H^2(\mathbb{R}_+)} + \| u - \tilde{u}^{X, \beta} \|_{H^1(\mathbb{R}_+)} \Big) \leq \e_0,
    \end{equation}
    then we have the following estimate for all $t \in [0, T]$:
    \begin{equation*}
    \begin{aligned}
    & \|(\rho - \tilde{\rho}^{X, \beta})(t,\cdot
    )\|_{H^2(\mathbb{R}_+)}^2 + \| (u - \tilde{u}^{X, \beta})(t,\cdot) \|_{H^1(\mathbb{R}_+)}^2 \\
    &\qquad + \int_0^t \Big( \delta |\dot{X}|^2 + G_1 + G_3 + G^S + D_{u_1} + D_{u_2} + G_{w_1} + D_{w_1} + G_{w_2} + D_{w_2} \Big)\,ds\\
    &\quad \leq C_0 \Big( \| \rho_0 - \tilde{\rho}^{0,\beta} \|_{H^2(\mathbb{R}_+)}^2 + \| u_0 - \tilde{u}^{0,\beta} \|_{H^1(\mathbb{R}_+)}^2 \Big) + C_0 e^{-c_0 \delta \beta}.
    \end{aligned} 
    \end{equation*}
    Here, the constants $C_0$ and $c_0$ are independent of $T, \beta, \delta_0,$ and $\varepsilon_0$, and
    \begin{equation}\label{terms}
    \begin{split}
        &\begin{aligned}
         G_1 = \int_{\mathbb{R}_+} a_x \frac{\sigma-\tilde{u}}{2}\gamma\tilde{\rho}^{\gamma-2}\left((\rho-\tilde{\rho})-\frac{\tilde{\rho}}{\sigma-\tilde{u}}(u-\tilde{u})\right)^2\,dx,   
        \end{aligned}\\
        &\begin{aligned}
        &G_3 = \int_{\mathbb{R}_+} a_x \frac{\sigma-\tilde{u}}{2}\tilde{\rho}|w-\tilde{w}|^2\,dx, &&
        G^S = \int_{\mathbb{R}_+} |\tilde{u}_x| |u- \tilde{u}|^2\,dx, \\
        &D_{u_1} = \mu \int_{\mathbb{R}_+} a |(u - \tilde{u})_x|^2\,dx, &&
        D_{u_2} = \mu \int_{\mathbb{R}_+} \frac{1}{\rho} |(u - \tilde{u})_{xx}|^2\,dx, \\
        &G_{w_1} = \frac{1}{\sqrt{\kappa}} \int_{\mathbb{R}_+} \frac{p'(\rho)}{\rho^{1/2}} |w - \tilde{w}|^2\,dx, &&
        D_{w_1} = \sqrt{\kappa} \int_{\mathbb{R}_+} \rho^{1/2} |(w - \tilde{w})_x|^2\,dx, \\
        &G_{w_2} = \frac{1}{\sqrt{\kappa}} \int_{\mathbb{R}_+} \frac{p'(\rho)}{\rho^{1/2}} |(w - \tilde{w})_x|^2\,dx, &&
        D_{w_2} = \sqrt{\kappa} \int_{\mathbb{R}_+} \rho^{1/2} |(w - \tilde{w})_{xx}|^2\,dx.
        \end{aligned}
    \end{split}
    \end{equation}
    Furthermore, the derivative of the shift satisfies 
    \[
    |\dot{X}(t)| \leq C_0 \big( \| (\rho - \tilde{\rho}^{X, \beta})(t,\cdot) \|_{L^\infty(\mathbb{R}_+)} + \| (u - \tilde{u}^{X, \beta})(t,\cdot) \|_{L^\infty(\mathbb{R}_+)} \big), \quad \forall t \in [0,T].
    \]
\end{proposition}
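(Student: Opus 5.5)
The plan is to prove Proposition \ref{prop:apriori} in two stages. The first is a weighted relative entropy ($L^2$) estimate for the augmented system \eqref{eq:augmented-sys}. The second is a set of higher-order energy estimates that recover the derivative norms and the $w$-dissipations. The two are then combined and closed using the smallness of $\delta$, $\e_0$ and the factor $e^{-c\delta\beta}$.

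\textbf{Stage 1: weighted relative entropy.} Write $U=(\rho,\rho u,\rho w)$ and use the entropy $\eta(U)=\frac{1}{2}\rho u^2+\frac12\rho w^2+Q(\rho)$ with $Q''=p'/\rho$. We compute $\frac{d}{dt}\int_{\R_+} a\,\eta(U|\tilde U)\,dx$, where $\tilde U$ is shifted by $X+\beta$ and solves the profile system \eqref{eq:VCshock}, rewritten with $\tilde w$. This gives an identity of the form
\[
\frac{d}{dt}\int a\,\eta(U|\tilde U)=\dot X\,Y(U)+\mathcal J^{bad}(U)-\mathcal G(U)+\mathcal B,
\]
with the following pieces.
\begin{itemize}
\item $Y$ collects the terms linear in $\dot X$. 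The definition \eqref{def:X} of $X$ is designed so that $\dot X\,Y=-\frac{\delta}{M}|\dot X|^2+\dots$.
\item $\mathcal G$ contains the good terms from $a_x>0$ together with the profile monotonicity: $G_1$, $G_3$, $G^S$ and $D_{u_1}$. The Korteweg cross terms $\sqrt\kappa(\rho^{3/2}w_x)_x(u-\tilde u)$ and $-\sqrt\kappa(\rho^{3/2}u_x)_x(w-\tilde w)$ cancel to leading order after integration by parts.
\item $\mathcal B$ contains the boundary terms at $x=0$. Because $u(t,0)=u_-<0$ (outflow) and $w(t,0)=0$, the flux term $-a u\,\eta(U|\tilde U)|_{x=0}$ has a good sign. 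The mismatch $\tilde u(-\sigma t-X-\beta)-u_-$ is of size $\delta e^{-c\delta(\sigma t+\beta)}$, provided $X(t)\ge -\sigma t/2$. That bound follows from $|\dot X|\lesssim \e_0$, which gives an integrable-in-time contribution $C e^{-c_0\delta\beta}$.
\end{itemize}
The hyperbolic bad term, of the form $\int a_x\,p(\rho|\tilde\rho)$ and its cross terms, is controlled by the sharp Poincaré-type inequality of Kang--Vasseur. We localize to the shock layer via the variable $y=(u_+-\tilde u)/\delta$, and use $\dot X$ to absorb the mean part. This requires the subsonic/transonic condition only through $\sigma-\tilde u>0$, which is guaranteed by the remark following Lemma \ref{lem:shock-profile}. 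The constant $M=2(\gamma+1)/\rho_+$ is the one that makes the leading-order Burgers-type expansion in $\delta$ balance.

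\textbf{Stage 2: higher-order estimates.} Next we test the $u$-equation against $-(u-\tilde u)_{xx}/\rho$ to get $D_{u_2}$, and the $w$-equation (differentiated) against $(w-\tilde w)_x$. In the resulting $H^1$ estimate the second-order terms $\sqrt\kappa\rho^{3/2}w_{xx}u_{xx}$ cancel, as anticipated at \eqref{eq:higher-order-1}. The dissipations $G_{w_1},D_{w_1}$ come from a cross-estimate: we test the momentum equation against $-(w-\tilde w)$, whose pairing with $\sqrt\kappa(\rho^{3/2}w_x)_x$ yields $D_{w_1}$, and whose pressure term $p'(\rho)\rho_x=p'\sqrt{\rho/\kappa}\,w$ yields $G_{w_1}$. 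The time-derivative term $\int\rho(u-\tilde u)(w-\tilde w)$ is kept inside the energy with a small coefficient. The same procedure one derivative higher gives $G_{w_2},D_{w_2}$. Since $\rho-\tilde\rho$ is measured in $H^2$ and $w\sim\rho_x$, these norms are equivalent to the $H^2\times H^1$ norms.

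\textbf{Main obstacle.} The delicate point is the boundary terms in Stage 2. There $(u-\tilde u)_x(t,0)$ and $(w-\tilde w)_x(t,0)$ are not prescribed. We intend to handle them by using the equations at $x=0$: since $u_t(t,0)=0$ and $w(t,0)=0$, the $w$-equation gives $w_x(t,0)$ in terms of $u_{xx}(t,0)$, and vice versa. The remaining traces are then bounded by the trace inequality $|f(0)|^2\le 2\|f\|\|f_x\|$ and absorbed into $D_{u_2}$ and $D_{w_2}$, with the lower-order pieces absorbed into $\e_0$-small terms.

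\textbf{Closing.} Nonlinear terms are bounded by $\e_0$ times dissipation. Profile-derivative terms are bounded by $\delta$ times $G^S$ and $G_1$. Summing the two stages with suitable small weights, and integrating in time, gives the stated estimate. Finally, the bound on $|\dot X|$ follows directly from \eqref{def:X}, since $\int|\tilde\rho_x|+|\tilde u_x|\,dx\lesssim\delta$ cancels the factor $1/\delta$.
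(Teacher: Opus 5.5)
Your Stage 1, the cross estimate for $G_{w_1},D_{w_1}$, the final combination and the bound on $|\dot X|$ all follow the paper. The gap is in the boundary treatment of Stage 2, which you flag yourself as the main obstacle. Write $\psi=u-\tilde u$ and $\omega=w-\tilde w$.

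The first-order traces $\psi_x(t,0)$ and $\omega_x(t,0)$ are not the real difficulty. They are bounded by $\|(\psi_x,\omega_x)\|_{L^\infty}\le C\|(\psi_x,\omega_x)\|_{H^1}$. What cannot be controlled is a \emph{second}-order trace. Indeed $|\psi_{xx}(t,0)|^2\le 2\|\psi_{xx}\|_{L^2}\|\psi_{xxx}\|_{L^2}$, and nothing in the scheme controls $\psi_{xxx}$. In the class $u-\tilde u\in L^2(0,T;H^2)$, $\rho-\tilde\rho\in L^2(0,T;H^3)$, neither $\psi_{xx}$ nor $\omega_{xx}\sim\rho_{xxx}$ even has a trace.

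Read literally, your mechanism (using the $w$-equation at $x=0$ to express $w_x(t,0)$ through $u_{xx}(t,0)$) trades a controllable trace for an uncontrollable one. Your choice of multipliers is also what creates second-order traces in the first place:
\begin{itemize}
\item differentiating the $w$-equation and testing with $\omega_x$ produces $\sqrt\kappa\,\rho^{1/2}\psi_{xx}\omega_x|_{x=0}$ after integration by parts;
\item the cross estimate ``one derivative higher'' produces $\omega_{xx}\omega_x$, $\psi_{xx}\omega_x$ and $\psi_{xx}\psi_x$ at $x=0$.
\end{itemize}
Removing these needs the opposite direction. At $x=0$, where $w=w_t=\rho_x=0$, the $w$-equation gives $\sqrt\kappa\rho^{3/2}\psi_{xx}=-\rho u_-\omega_x+(\text{exp.\ small})$. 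You also need the momentum equation there, which gives $\mu\psi_{xx}+\sqrt\kappa\rho^{3/2}\omega_{xx}=\rho u_-\psi_x+(\text{exp.\ small})$. You are then left with $O(|u_-|)$ terms such as $|\omega_x(t,0)|^2$ and $|\psi_x(t,0)\omega_x(t,0)|$. These must be absorbed by interpolation into $\tau D_{w_2}+C_\tau D_{w_1}$ and $C(D_{u_1}+D_{u_2})$, and a regularization argument is needed to make sense of the traces. That repaired route can plausibly be closed, but it is not what you wrote.

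The idea you are missing is simply not to differentiate the equations.
\begin{itemize}
\item For $D_{u_2}$, the paper tests $\eqref{eq:perturbed-eq}_2$ with $-\psi_{xx}/\rho$ and $\eqref{eq:perturbed-eq}_3$ with $-\omega_{xx}/\rho$. The top-order terms $\sqrt\kappa\rho^{1/2}\psi_{xx}\omega_{xx}$ then cancel pointwise.
\item For $G_{w_2},D_{w_2}$, it tests crosswise with $-\omega_{xx}/\rho$ and $-\psi_{xx}/\rho$. Then $D_{w_2}$ comes directly from $-\sqrt\kappa\int\rho^{-1}\omega_{xx}(\rho^{3/2}\omega_x)_x$, and the companion term is bounded by $CD_{u_2}$.
\end{itemize}
Viscous, capillary and transport terms are never integrated by parts; they are handled by Young's inequality. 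The only boundary terms are $\psi_t\psi_x$, $\omega_t\omega_x$, $\psi_t\omega_x$, $\omega_t\psi_x$ and $\omega\omega_x$ at $x=0$.

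Since $u(t,0)=u_-$ and $w(t,0)=0$ for all $t$, one has $\psi_t(t,0)=-\tilde u_t(t,0)$, $\omega_t(t,0)=-\tilde w_t(t,0)$ and $\omega(t,0)=-\tilde w(t,0)$. All three are $O(\delta^2e^{-c\delta(\sigma t+\beta)})$ by \eqref{small-X}. So the Sobolev bound on $\|(\psi_x,\omega_x)\|_{L^\infty}$ closes everything, with no second-order traces and within the stated regularity class.

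A smaller correction concerns \eqref{cond:U+}. It is not needed for $\sigma-\tilde u>0$, which holds for every weak 2-shock. It is used to get $\sigma>\lambda_2(\rho_+,u_+)\ge 0$, i.e.\ the shock moves away from the wall. That is what makes your factor $e^{-c\delta(\sigma t+\beta)}$ integrable in time. It also keeps $y_0(t)\le Ce^{-c\delta\beta}<1/6$, which the half-line Poincar\'e inequality on $[y_0,1]$ requires; on the half-line this replaces the Kang--Vasseur inequality on $[0,1]$.
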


\subsection{Proof of Theorem \ref{thm:main}}
Combining the local existence in Proposition \ref{prop:loc} and the a priori estimates in Proposition \ref{prop:apriori}, wit is straightforward to prove the global existence of the solution using a standard continuation argument as in \cite{HK26,KOW25}. Since the argument is almost the same, we omit the details.

To attain the time-asymptotic convergence \eqref{asym-U}, we again follow a similar argument as in \cite{HK26,KOW25}. Precisely, we consider the following quantity
\begin{equation}\label{def:g}
	g(t):=\|(\rho-\tilde{\rho}^{X,\beta})_x\|_{H^1(\R_+)}^2+\|(u-\tilde{u}^{X,\beta})_x\|_{L^2(\R_+)}^2.
\end{equation}
Once we show that $g\in W^{1,1}(\R_+)$, we have $\lim_{t\to\infty}g(t)=0$. Then, the standard interpolation inequality implies the desired convergence toward the viscous-dispersive shock wave. Since the proof of $g\in W^{1,1}(\R_+)$ is technical, and it requires several estimates in the next section, we postpone its proof to Appendix \ref{sec:app-A}.

\medskip

Therefore, it remains to prove Proposition \ref{prop:apriori}. In the following two sections, we present $L^2$-estimates and $H^1$-estimates for the perturbations $(\rho-\tilde{\rho},u-\tilde{u},w-\tilde{w})$, which complete the proof of Proposition \ref{prop:apriori}. 

For simplicity of notation, we omit the spatial domain dependency on the function spaces, e.g., $L^2:=L^2(\R_+)$. Furthermore, we adopt the following convention for constants: $C$ and $c$ denote generic positive constants that may change from line to line, while $C_1, C_2, \ldots$ denote specific positive constants that remain fixed once introduced. All these constants are independent of $T$, $\beta$, $\delta_0$, and $\varepsilon_0$. Constants depending on additional parameters are denoted explicitly, e.g., $C_\tau$.

\section{\texorpdfstring{$L^2$}{L2}-Energy Estimates}\label{sec:L2_energy}
\setcounter{equation}{0}
In this section, we establish the $L^2$-energy estimates for the perturbation based on the method of $a$-contraction with shifts. The primary goal of this section is to prove the following lemma, which serves as the foundation for the proof of Proposition \ref{prop:apriori}.

\begin{lemma} \label{lem:L2}
	Under the hypotheses of Proposition \ref{prop:apriori}, there exist positive constants $C$ and $c$ such that for all $t \in [0, T]$, the following $L^2$-estimate holds:
    \begin{equation} \label{est-L2}
        \begin{aligned}
    &\|(\rho - \tilde{\rho}, u - \tilde{u}, w-\tilde{w}) (t,\cdot)\|_{L^2}^2 + \int_0^t \big(\delta |\dot{X}|^2 + G_1 + G_3 + G^S + D_{u_1} \big)\,ds\\
    &\quad \leq C \|(\rho - \tilde{\rho}, u - \tilde{u}, w-\tilde{w})(0,\cdot) \|_{L^2}^2 + Ce^{-c\delta \beta}+ C\varepsilon^2 \int_0^t  \|(u-\tilde{u})_{xx}(s,\cdot)\|_{L^2}^2 \, ds\\
    &\qquad + C(\delta^{3/4}+\varepsilon^2) \int_0^t  \|(w-\tilde{w})_{xx}(s,\cdot)\|_{L^2}^2 \, ds,
\end{aligned}
\end{equation}
where $G_1$, $G_3$, $G^S$, and $D_{u_1}$ are defined in \eqref{terms}.
\end{lemma}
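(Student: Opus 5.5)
We will prove Lemma \ref{lem:L2} by the weighted relative entropy method applied to the augmented system \eqref{eq:augmented-sys}. Write $U=(\rho,\rho u,\rho w)$ in conserved form and $\tilde U$ for the shifted profile $(\tilde\rho,\tilde\rho\tilde u,\tilde\rho\tilde w)$. We use the entropy $\eta(U)=\rho\frac{u^2+w^2}{2}+Q(\rho)$ with $Q(\rho)=\frac{\rho^\gamma}{\gamma-1}$, and the relative entropy
\[
\eta(U|\tilde U)=\rho\frac{|u-\tilde u|^2+|w-\tilde w|^2}{2}+Q(\rho|\tilde\rho),
\]
which is equivalent to $|(\rho-\tilde\rho,u-\tilde u,w-\tilde w)|^2$ under \eqref{p-assum}.

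\textbf{Step 1: the time derivative of $\int a\,\eta(U|\tilde U)\,dx$.} The profile satisfies \eqref{eq:VCshock} in the moving frame $\xi=x-\sigma t-X-\beta$, so its time derivative carries an extra $-\dot X\,\partial_x\tilde U$. We would write
\[
\frac{d}{dt}\int_{\R_+}a\,\eta(U|\tilde U)\,dx=\dot X\,Y(U)+\mathcal J^{bad}-\mathcal J^{good}+\mathcal B,
\]
where:
\begin{itemize}
\item $Y$ collects the terms linear in $\dot X$ (from $\tilde U_t$ and $a_t$);
\item $\mathcal J^{good}$ contains $D_{u_1}$, the terms $a_x(\sigma-\tilde u)\eta(U|\tilde U)$, which give $G_1$, $G_3$ and a $u$-part after diagonalizing, and the profile term $\int|\tilde u_x||u-\tilde u|^2$;
\item $\mathcal J^{bad}$ contains the hyperbolic flux terms localized by $\tilde u_x,\tilde\rho_x$, the viscous term $\mu\int a_x(u-\tilde u)(u-\tilde u)_x$, and the capillary terms.
\end{itemize}
The skew structure of the $u$–$w$ second-order terms, $\sqrt\kappa(\rho^{3/2}w_x)_x$ against $-\sqrt\kappa(\rho^{3/2}u_x)_x$, cancels the leading capillary contributions. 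What remains are boundary terms and commutators carrying $a_x$ or $\tilde\rho_x$.

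\textbf{Step 2: boundary terms.} Here we use $u(t,0)=\tilde u(t,0)+O(e^{-c\delta(\sigma t+\beta)})$, since $u_-$ equals the far-left state of the profile. We also use $w(t,0)=0$ and $|\tilde w(t,0)|\le C\delta^2e^{-c\delta(\sigma t+\beta)}$, together with the outflow sign $u_-<0$. The convective boundary flux $-a\,u\,\eta(U|\tilde U)|_{x=0}$ is then nonnegative on the good side. The pressure and viscous boundary contributions are bounded by $Ce^{-c\delta(\sigma t+\beta)}\big(1+\|(u-\tilde u)_x\|_{L^\infty}\big)$. Using $\|f_x\|_{L^\infty}^2\le\|f_x\|\|f_{xx}\|$, they integrate in time to $Ce^{-c\delta\beta}+\varepsilon^2\int\|(u-\tilde u)_{xx}\|^2+\varepsilon^2\int\|(w-\tilde w)_{xx}\|^2$. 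The capillary boundary terms $\sqrt\kappa\rho^{3/2}\big[(w-\tilde w)_x(u-\tilde u)-(u-\tilde u)_x(w-\tilde w)\big]$ at $x=0$ are small for the same reason.

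\textbf{Step 3: the shift and the weighted Poincaré inequality.} The definition \eqref{def:X} of $X$ is designed so that $\dot X\,Y=-\frac{\delta}{M}|\dot X|^2+\dots$. The principal part of $Y$ is then bounded by $\frac{\delta}{2M}|\dot X|^2$ plus a quadratic correction. The main obstacle is absorbing the leading bad term: the hyperbolic part $\int a_x|u-\tilde u|^2$-type quantities and the viscous cross term. For this we would restrict to the variable $y=\frac{u_--\tilde u}{\delta}\in(0,1)$. In that variable we apply the sharp weighted Poincaré inequality of \cite{KV21,KVW23},
\[
\int_0^1|f-\bar f|^2dy\le\frac12\int_0^1y(1-y)|f'|^2dy,
\]
exactly as in the Navier--Stokes case. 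The choice $M=\frac{2(\gamma+1)}{\rho_+}$ matches the leading constants, and the extra $\sqrt\delta$ in $a$ gives the required margin. The domain is the half-line rather than $\R$, so the portion $y$ near the boundary is truncated. The missing piece of the $y$-interval costs only $e^{-c\delta\beta}$ because the profile is shifted by $\beta$.

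\textbf{Step 4: remaining terms.} The remaining capillary commutators are of the form $\int\tilde\rho_x|w-\tilde w||(u-\tilde u)_x|$ and $\int|\tilde u_x||w-\tilde w|\,|(w-\tilde w)_x|$. We bound them via Young's inequality by $\delta(D_{u_1}+G^S+G_3)$ plus a term $\delta^{3/4}\int\|(w-\tilde w)_{xx}\|^2$. The latter comes from interpolating $\|(w-\tilde w)_x\|^2\le\|w-\tilde w\|\|(w-\tilde w)_{xx}\|$ against the $\delta^2$-size factors. The cubic terms are bounded by $\varepsilon$ times the good terms plus $\varepsilon^2$ times the second-derivative norms. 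Integrating in time gives \eqref{est-L2}.
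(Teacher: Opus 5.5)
The overall scheme matches the paper's: weighted relative entropy for the augmented system, the shift, the variable $y$ with the sharp Poincar\'e inequality, and boundary terms handled by $u_-<0$ and the exponential smallness of $(u-\tilde u)(t,0)$ and $\tilde w(t,0)$. However, Step 4 contains a step that fails.

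\textbf{The $\delta^{3/4}$ term.} In the $L^2$ identity, $(w-\tilde w)_x$ enters through $B_6=-\sqrt{\kappa}\int a_x\rho^{3/2}(u-\tilde u)(w-\tilde w)_x\,dx$ and through the $\tilde u_x(\rho^{3/2}-\tilde\rho^{3/2})(w-\tilde w)_x$ part of $B_9$. The commutator $\int|\tilde u_x||w-\tilde w||(w-\tilde w)_x|$ you list does not occur. Because the second-order $u$--$w$ coupling is skew, $\|(w-\tilde w)_x\|^2$ is never a good term at this level.

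You propose to turn the resulting $\delta^{3/4}\|(w-\tilde w)_x\|^2$ into $\delta^{3/4}\|(w-\tilde w)_{xx}\|^2$ via $\|(w-\tilde w)_x\|^2\le C\|w-\tilde w\|\,\|(w-\tilde w)_{xx}\|$. This does not close. Every Young splitting leaves either $\delta^{3/4}\int_0^t\|w-\tilde w\|_{L^2}^2\,ds$, or, after using \eqref{p-assum}, $\delta^{3/4}\varepsilon\int_0^t\|(w-\tilde w)_{xx}\|_{L^2}\,ds$. Neither is controlled: the only dissipation of $w-\tilde w$ available in this lemma is $G_3$, which is localized by $a_x$, and the global $G_{w_1}$ appears only later.

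The paper makes no such conversion. In the proof of Lemma \ref{lem:lead} it bounds $B_6\le C\delta^{5/4}\int_{y_0}^1f^2\,dy+C\delta^{3/4}\|(w-\tilde w)_x\|_{L^2}^2$ and simply carries $C\delta^{3/4}\int_0^t\|(w-\tilde w)_x\|_{L^2}^2\,ds$. This $D_{w_1}$-type remainder is absorbed once the cross estimate for $\int\psi\omega\,dx$ in Lemma \ref{lem:higeroder-1} supplies $D_{w_1}$. So what the argument actually delivers in the $\delta^{3/4}$-term is $(w-\tilde w)_x$, not the $(w-\tilde w)_{xx}$ displayed in the statement.

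If you want no remainder at all, keep the localization instead. Integrate by parts in $B_6$ so the derivative falls on $a_x\rho^{3/2}(u-\tilde u)$, and use $|a_{xx}|\le C\delta a_x$, $a_x^2\le C\delta^{3/2}a_x$ and $|(\rho-\tilde\rho)_x|\le C(|w-\tilde w|+|\tilde\rho_x||\rho-\tilde\rho|)$. The result is bounded by $\frac{1}{40}D_{u_1}+C(\delta+\varepsilon)G_3+C\delta^{3/2}\int_{y_0}^1f^2\,dy$ plus an exponentially small boundary term. The $(w-\tilde w)_x$-part of $B_9$ can be treated the same way.

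\textbf{Good/bad bookkeeping.} In Steps 1 and 3 your classification is inverted exactly where the sharp constants matter.

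\emph{The leading bad term.} Since $\tilde u_x<0$, the flux term $-\int a\,\tilde u_x\bigl(\rho|u-\tilde u|^2+p(\rho|\tilde\rho)\bigr)\,dx$ is nonnegative. After writing $\rho-\tilde\rho=\frac{\tilde\rho}{\sigma-\tilde u}(u-\tilde u)$ modulo $G_1$, it is the leading bad term ($B_2+B_3$), of size $\frac{\gamma+1}{2}\rho_+\delta\int_{y_0}^1f^2\,dy$. It is not a good term.

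\emph{The terms you single out are harmless.} $B_1-G_2=O(\delta^{3/2})\int_{y_0}^1f^2\,dy$ because $(\sigma-\tilde u)^2\approx p'(\tilde\rho)$, and the viscous cross term $B_4$ is absorbed by $D_{u_1}$.

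\emph{Where $G^S$ comes from.} $G^S$ is not a term of the identity. It is the margin left after $D_{u_1}$ beats $B_2+B_3$, using the Jacobian bound $\mu\frac{1}{y(1-y)}\frac{dy}{dx}=\frac{\gamma+1}{2}\rho_+\delta+O(\delta^2)$ and the Poincar\'e inequality; already $\frac58D_{u_1}$ yields $\frac54$ times the bad size. The mean $\bigl(\int f\,dy\bigr)^2$ is absorbed by $-\frac{M}{4\delta}|Y_1|^2$ for $M=\frac{2(\gamma+1)}{\rho_+}$.

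\emph{Smaller points.} The Jacobian bound is where capillarity must be checked: $\kappa K(\tilde\rho)$ in the integrated profile equation has to be shown to be an $O(\delta)$ relative correction, so this is not verbatim the Navier--Stokes computation. Your truncation of the $y$-interval can be made to work, but the paper avoids it. It applies the Poincar\'e inequality directly on $[y_0,1]$ with weight $(y-y_0)(1-y)$, needing only $y_0<\frac16$.
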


\subsection{Weighted relative entropy estimate}
To properly handle the higher-order Korteweg term, we introduce the capillarity variable $n = \rho w$ and rewrite the NSK equations \eqref{eq:NSK} as an augmented $3 \times 3$ system of viscous conservation laws \eqref{eq:augmented-sys}:
\begin{align}\label{eq:NSK-standard}
	\pa_tU + \pa_xf(U) = \pa_x(M(U)\pa_x\nabla\eta(U)),
\end{align}
where
\begin{align*}
    U:=\begin{pmatrix}
	\rho\\m\\n
    \end{pmatrix}=\begin{pmatrix}
    \rho\\\rho u\\ \rho w
    \end{pmatrix},\quad f(U):=\begin{pmatrix}
    m\\ \frac{m^2}{\rho}+p(\rho)\\ \frac{mn}{\rho}
    \end{pmatrix},\quad M(U):=\begin{pmatrix}
    0& 0 & 0 \\ 0 & \mu & \sqrt{\kappa} \rho^{3/2}\\0 & -\sqrt{\kappa} \rho^{3/2} & 0
    \end{pmatrix}.
\end{align*}
and the mathematical entropy of the system \eqref{eq:NSK-standard} is given by
\[\eta(U):=\frac{m^2}{2\rho}+\frac{\rho^\gamma}{\gamma-1}+\frac{n^2}{2\rho}=\frac{\rho u^2}{2}+\frac{\rho^{\gamma}}{\gamma-1}+\frac{\rho w^2}{2}.\]
Similarly, the shifted viscous-dispersive shock wave \[\tilde{U}(t,x)=\tilde{U}^{X,\beta}(t,x)=\begin{pmatrix}
\tilde{\rho}^{X,\beta}(t,x)\\\tilde{m}^{X,\beta}(t,x)\\\tilde{n}^{X,\beta}(t,x)
\end{pmatrix}=\begin{pmatrix}
\tilde{\rho}^{X,\beta}(t,x)\\\tilde{\rho}^{X,\beta}(t,x)\tilde{u}^{X,\beta}(t,x)\\\tilde{\rho}^{X,\beta}(t,x)\tilde{w}^{X,\beta}(t,x)
\end{pmatrix}\]
satisfies the following system
\begin{equation}\label{eq:system_shock}
	\pa_t \tilde{U} +\pa_x f(\tilde{U}) = \pa_x(M(\tilde{U})\pa_x\nabla\eta(\tilde{U}))-\dot{X}\pa_x\tilde{U}.
\end{equation}
Now, consider the relative entropy $\eta(U|\tilde{U})$, which can be straightforwardly computed as
\begin{align*}
    \eta(U|\tilde{U})&:=\eta(U)-\eta(\widetilde{U})-\nabla\eta(U)(U-\widetilde{U})=\frac{\rho|u-\tilde{u}|^2}{2}+\frac{p(\rho|\tilde{\rho})}{\gamma-1}+\frac{\rho|w-\tilde{w}|^2}{2},
\end{align*}
and the relative flux becomes
\begin{align*}
    f(U|\tilde{U}) =f(U)-f(\widetilde{U})-D f(U)(U-\widetilde{U})=\begin{pmatrix}
	0\\ \rho|u-\tilde{u}|^2+p(\rho|\tilde{\rho})\\\rho(u-\tilde{u})(w-\tilde{w})
\end{pmatrix}.
\end{align*}
Here, $p(\rho|\tilde{\rho}):=p(\rho)-p(\tilde{\rho})-p'(\tilde{\rho})(\rho-\tilde{\rho})$ is a relative pressure. Furthermore, the entropy flux $q(U)$ satisfying the compatibility condition $\nabla q(U) = \nabla \eta(U)\nabla f(U)$ is
\[q(U) = \frac{m^3}{2\rho^2}+\frac{\gamma}{\gamma-1}\rho^{\gamma-1}m+\frac{mn^2}{2\rho^2} = u\eta(U) + up(\rho).\]
Consequently, the relative entropy flux $q(U;\tilde{U})$ is derived as
\begin{align*}
	q(U;\tilde{U})&=q(U)-q(\tilde{U})-\nabla\eta(\tilde{U})(f(U)-f(\tilde{U}))\\
	&=\frac{\rho u}{2}|u-\tilde{u}|^2+\frac{u}{\gamma-1}p(\rho|\tilde{\rho}) +\frac{\rho u}{2}|w-\tilde{w}|^2 +(p-\tilde{p})(u-\tilde{u})\\
	&=u\eta(U|\tilde{U})+(p-\tilde{p})(u-\tilde{u}).
\end{align*}

As the relative entropy is locally quadratic, we focus on estimating the relative entropy to get the desired $L^2$-estimate. We now establish the weighted relative entropy estimate for the NSK equations.

\begin{lemma} \label{lem:weighted-rel-entropy}
    Let $a$ be the weight function defined in \eqref{def:a}, $U$ be a solution to \eqref{eq:NSK-standard}, and $\tilde{U}$ be the shifted shock wave satisfying \eqref{eq:system_shock}. Then, the time evolution of the weighted relative entropy satisfies
   	\begin{equation}\label{est:weighted_relative_ent}
   		\frac{d}{dt}\int_{\mathbb{R}_+}a(t,x)\eta(U(t,x)|\tilde{U}(t,x))\,dx=\dot{X}Y +\mathcal{J}^{bad}-\mathcal{J}^{good}+P,
   	\end{equation}
    where $Y$ is defined by
    \begin{align*}
        Y&:=-\int_{\mathbb{R}_+}a_x\eta(U|\tilde{U})\,dx +\int_{\mathbb{R}_+}a\partial_x(\nabla\eta(\tilde{U}))(U-\tilde{U})\,dx\\
        &= -\int_{\mathbb{R}_+}a_x\eta(U|\tilde{U})\,dx + \int_{\mathbb{R}_+} a \gamma \tilde{\rho}^{\gamma-2}(\rho - \tilde{\rho})\tilde{\rho}_x\,dx + \int_{\mathbb{R}_+} a  \rho((u-\tilde{u})\tilde{u}_x + (w-\tilde{w})\tilde{w}_x)\,dx,
    \end{align*}
    and the bad, good, and boundary terms are given by
    \begin{align*}
        \mathcal{J}^{bad}&:=\int_{\mathbb{R}_+}a_x(p-\tilde{p})(u-\tilde{u})\,dx + \int_{\mathbb{R}_+}a_x(u-\tilde{u})\frac{p(\rho|\tilde{\rho})}{\gamma-1}\,dx\\
        &\quad +\frac{1}{2}\int_{\mathbb{R}_+}a_x\left(\rho(u-\tilde{u})^3-(\sigma-\tilde{u})(\rho-\tilde{\rho})(u-\tilde{u})^2\right)\,dx\\
        &\quad +\frac{1}{2}\int_{\mathbb{R}_+}a_x\left(\rho(u-\tilde{u})(w-\tilde{w})^2-(\sigma-\tilde{u})(\rho-\tilde{\rho})(w-\tilde{w})^2\right)\,dx\\
        &\quad -\int_{\mathbb{R}_+}a\tilde{u}_x(\rho|u-\tilde{u}|^2+p(\rho|\tilde{\rho}))\,dx-\int_{\mathbb{R}_+}a\tilde{w}_x\rho(u-\tilde{u})(w-\tilde{w})\,dx\\
        &\quad -\mu\int_{\mathbb{R}_+}a_x(u-\tilde{u})(u-\tilde{u})_x\,dx-\sqrt{\kappa}\int_{\mathbb{R}_+}a_x(u-\tilde{u})\rho^{3/2}(w-\tilde{w})_x\,dx\\
        &\quad +\sqrt{\kappa}\int_{\mathbb{R}_+}a_x(w-\tilde{w})\rho^{3/2}(u-\tilde{u})_x\,dx\\
        &\quad -\sqrt{\kappa}\int_{\mathbb{R}_+}a_x(\rho^{3/2}-\tilde{\rho}^{3/2})((u-\tilde{u})\tilde{w}_x-(w-\tilde{w})\tilde{u}_x)\,dx\\
        &\quad -\sqrt{\kappa}\int_{\mathbb{R}_+}a(\rho^{3/2}-\tilde{\rho}^{3/2})((u-\tilde{u})_x\tilde{w}_x-(w-\tilde{w})_x\tilde{u}_x)\,dx\\
        &\quad -\int_{\mathbb{R}_+}a\frac{\rho-\tilde{\rho}}{\tilde{\rho}}\left[(u-\tilde{u})\left(\mu\tilde{u}_{xx}+(\sqrt{\kappa}\tilde{\rho}^{3/2}\tilde{w}_x)_x\right)-(w-\tilde{w})(\sqrt{\kappa}\tilde{\rho}^{3/2}\tilde{u}_x)_x\right]\,dx, 
    \end{align*}
    and
    \begin{align*}
         \mathcal{J}^{good}&:=\int_{\mathbb{R}_+}a_x(\sigma-\tilde{u})\left(\tilde{\rho}\frac{|u-\tilde{u}|^2}{2}+\frac{p(\rho|\tilde{\rho})}{\gamma-1} +\tilde{\rho}\frac{|w-\tilde{w}|^2}{2} \right)\,dx +\mu\int_{\mathbb{R}_+}a|(u-\tilde{u})_x|^2\,dx, \\
        P 
         &:=\left.\left[ a u\Bigl( \rho  \frac{(u-\tilde{u})^2}{2}+ \frac{p(\rho | \tilde{\rho})}{\gamma-1} + \rho  \frac{(w-\tilde{w})^2}{2} \Bigr) \right]\right|_{x=0}\\
         &\quad + \left.\left[ a \Bigl(  (p-\tilde{p}) (u-\tilde{u}) - \mu (u-\tilde{u}) (u-\tilde{u})_x \Bigr)\right]\right|_{x=0}\\
         &\quad \left. -\sqrt{\kappa} \left[ a \Bigl(   (u-\tilde{u})\rho^{3/2}(w-\tilde{w})_x - (w-\tilde{w})\rho^{3/2}(u-\tilde{u})_x \Bigr) \right]\right|_{x=0} \\
         &\quad \left. - \sqrt{\kappa} \left[ a \Bigl((u-\tilde{u})(\rho^{3/2}-\tilde{\rho}^{3/2})\tilde{w}_x -(w-\tilde{w})(\rho^{3/2}-\tilde{\rho}^{3/2})\tilde{u}_x \Bigr)\right]\right|_{x=0}.
    \end{align*}
\end{lemma}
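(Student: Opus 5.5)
We compute $\frac{d}{dt}\int_{\mathbb{R}_+} a\,\eta(U|\tilde U)\,dx$ directly from \eqref{eq:NSK-standard} and \eqref{eq:system_shock}, following the standard relative entropy calculation of \cite{KV21,KVW23}. Throughout we keep every boundary contribution at $x=0$ instead of discarding it, and we organize the resulting terms into the four groups $\dot X Y$, $\mathcal{J}^{bad}$, $-\mathcal{J}^{good}$ and $P$.

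\textbf{Step 1: time derivative of the weighted entropy.} Since $a=a^{X,\beta}$ and $\tilde U=\tilde U^{X,\beta}$ depend on $x-\sigma t-X(t)-\beta$, we have
\[
\partial_t a=-(\sigma+\dot X)a_x .
\]
Differentiating under the integral gives
\[
\frac{d}{dt}\int a\,\eta(U|\tilde U)\,dx=\int \partial_t a\,\eta(U|\tilde U)\,dx+\int a\bigl[(\nabla\eta(U)-\nabla\eta(\tilde U))\partial_t U-\nabla^2\eta(\tilde U)(U-\tilde U)\partial_t\tilde U\bigr]\,dx .
\]
We then substitute $\partial_t U$ from \eqref{eq:NSK-standard} and $\partial_t\tilde U$ from \eqref{eq:system_shock}. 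The terms proportional to $\dot X$ are $-\dot X\int a_x\eta(U|\tilde U)\,dx$ and $+\dot X\int a\,\nabla^2\eta(\tilde U)(U-\tilde U)\partial_x\tilde U\,dx$. Using $\nabla^2\eta(\tilde U)\partial_x\tilde U=\partial_x\nabla\eta(\tilde U)$, these combine into exactly $\dot X Y$. The explicit form of $Y$ then follows from
\[
\partial_x\nabla\eta(\tilde U)=\Bigl(\gamma\tilde\rho^{\gamma-2}\tilde\rho_x-\tilde u\tilde u_x-\tilde w\tilde w_x,\ \tilde u_x,\ \tilde w_x\Bigr)
\]
together with $m-\tilde m=\rho(u-\tilde u)+\tilde u(\rho-\tilde\rho)$ and the analogous identity for $n-\tilde n$.

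\textbf{Step 2: hyperbolic part.} We use the standard identity
\[
(\nabla\eta(U)-\nabla\eta(\tilde U))\partial_x f(U)-\nabla^2\eta(\tilde U)(U-\tilde U)\partial_x f(\tilde U)=\partial_x q(U;\tilde U)+\partial_x\nabla\eta(\tilde U)\,f(U|\tilde U),
\]
and integrate by parts against $a$, which produces the boundary term $a\,q(U;\tilde U)|_{x=0}$. By the formula $q(U;\tilde U)=u\eta(U|\tilde U)+(p-\tilde p)(u-\tilde u)$, this gives the first line of $P$ and the pressure part of the second line. In the interior we obtain $\int a_x q(U;\tilde U)\,dx$ and $-\int a\,\partial_x\nabla\eta(\tilde U)f(U|\tilde U)\,dx$; the latter yields the $\tilde u_x$ and $\tilde w_x$ terms in $\mathcal{J}^{bad}$. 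We combine $\int a_x q$ with the $-\sigma\int a_x\eta(U|\tilde U)\,dx$ coming from $\partial_t a$ and write
\[
u-\sigma=(u-\tilde u)-(\sigma-\tilde u),\qquad \rho=\tilde\rho+(\rho-\tilde\rho).
\]
This splits the result into the good term $-\int a_x(\sigma-\tilde u)\bigl(\tilde\rho\tfrac{|u-\tilde u|^2}{2}+\tfrac{p(\rho|\tilde\rho)}{\gamma-1}+\tilde\rho\tfrac{|w-\tilde w|^2}{2}\bigr)\,dx$ and the listed cubic bad terms, including $(p-\tilde p)(u-\tilde u)$ and $(u-\tilde u)p(\rho|\tilde\rho)/(\gamma-1)$.

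\textbf{Step 3: parabolic–dispersive part.} This is the main bookkeeping obstacle. We write
\[
\nabla\eta(U)-\nabla\eta(\tilde U)=\bigl(\ast,\ u-\tilde u,\ w-\tilde w\bigr),
\]
and note that the first row of $M$ vanishes, so only the last two components matter. The viscous-dispersive term becomes
\[
\int a\bigl[(u-\tilde u)\partial_x(\mu u_x+\sqrt\kappa\rho^{3/2}w_x)-(w-\tilde w)\partial_x(\sqrt\kappa\rho^{3/2}u_x)\bigr]\,dx,
\]
minus the corresponding $\tilde U$ term multiplied by $\nabla^2\eta(\tilde U)(U-\tilde U)$. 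The $\tilde U$ term must be rewritten as $\frac{\rho}{\tilde\rho}(\cdot)$ applied to the profile's viscous-dispersive flux; subtracting $1$ from $\frac{\rho}{\tilde\rho}$ produces the last line of $\mathcal{J}^{bad}$, with the factor $\frac{\rho-\tilde\rho}{\tilde\rho}$.

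For the remainder we split
\[
\rho^{3/2}w_x-\tilde\rho^{3/2}\tilde w_x=\rho^{3/2}(w-\tilde w)_x+(\rho^{3/2}-\tilde\rho^{3/2})\tilde w_x
\]
and do the same for the $u$ terms, then integrate by parts once. In the pure second-order terms, the antisymmetry of $M$ cancels the mixed $a\sqrt\kappa\rho^{3/2}(u-\tilde u)_x(w-\tilde w)_x$ contributions. What survives is the dissipation $-\mu\int a|(u-\tilde u)_x|^2\,dx$, which goes into $\mathcal{J}^{good}$; the $a_x$ commutator terms, which go into $\mathcal{J}^{bad}$; the $(\rho^{3/2}-\tilde\rho^{3/2})$ terms with $a_x$ and $a$; and the remaining boundary evaluations at $x=0$, which form the last three lines of $P$ (with a sign flip because $x=0$ is the left endpoint). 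The contributions at $x=\infty$ vanish by the decay of the perturbation in $H^1$.

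Care with signs at the boundary and with the exact cancellation of the mixed terms is the crucial point. We would verify this cancellation first, since it relies on the skew structure $M_{23}=-M_{32}$.
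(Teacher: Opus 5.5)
Your proposal is correct and follows essentially the same route as the paper. The paper quotes the standard relative entropy identity, which you derive by hand in Step 1, and then integrates by parts the same pieces: your ``$\rho/\tilde\rho$ minus $1$'' splitting is exactly its term $\int a(\nabla\eta)(U|\tilde U)\,\partial_x\big(M(\tilde U)\partial_x\nabla\eta(\tilde U)\big)\,dx$, and your decomposition of $\rho^{3/2}w_x-\tilde\rho^{3/2}\tilde w_x$ reproduces its dissipative and $(\rho^{3/2}-\tilde\rho^{3/2})$ terms together with the boundary contributions in $P$. Your recombination of $\int a_x q(U;\tilde U)\,dx$ with $-\sigma\int a_x\eta(U|\tilde U)\,dx$ via $u-\sigma=(u-\tilde u)-(\sigma-\tilde u)$ and $\rho=\tilde\rho+(\rho-\tilde\rho)$ is likewise identical to the paper's.
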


\begin{remark}
    We emphasize that the introduction of the weight function $a(t,x)$ is crucial in our analysis. Since $a_x > 0$ and $\sigma - \tilde{u} > 0$ for a weak 2-shock, the weight generates strictly positive localized terms in $\mathcal{J}^{good}$ weighted by $a_x(\sigma-\tilde{u})$. These terms are essential to absorb the higher-order nonlinear couplings in $\mathcal{J}^{bad}$.
\end{remark}

\begin{proof}
    Using the standard relative entropy estimate \cite{D96,D79,KV21} for the system \eqref{eq:NSK-standard}, the time evolution of the weighted relative entropy satisfies:
    \begin{align}
    	\begin{aligned}\label{est:rel-ent}
        &\frac{d}{dt}\int_{\mathbb{R}_+}a\eta(U|\tilde{U})\,dx
        =\dot{X}\left(-\int_{\mathbb{R}_+}a_x\eta(U|\tilde{U})\,dx+\int_{\mathbb{R}_+}a\partial_x(\nabla\eta(\tilde{U}))(U-\tilde{U})\,dx\right)\\
        &\quad -\sigma\int_{\mathbb{R}_+}a_x\eta(U|\tilde{U})\,dx -\int_{\mathbb{R}_+}a\partial_xq(U;\tilde{U})\,dx-\int_{\mathbb{R}_+}a\partial_x\nabla\eta(\tilde{U})f(U|\tilde{U})\,dx\\
        &\quad + \int_{\mathbb{R}_+}a(\nabla\eta(U)-\nabla\eta(\tilde{U}))\partial_x\Big(M(U)\partial_x(\nabla\eta(U)-\nabla\eta(\tilde{U}))\Big)\,dx\\
        &\quad +\int_{\mathbb{R}_+}a(\nabla\eta(U)-\nabla\eta(\tilde{U}))\partial_x\Big((M(U)-M(\tilde{U}))\partial_x\nabla\eta(\tilde{U})\Big)\,dx\\
        &\quad + \int_{\mathbb{R}_+}a(\nabla\eta)(U|\tilde{U})\partial_x\Big(M(\tilde{U})\partial_x\nabla\eta(\tilde{U})\Big)\,dx \\
        &=: \dot{X}Y -\sigma\int_{\mathbb{R}_+}a_x\eta(U|\tilde{U})\,dx + I_1 + I_2 + I_3 + I_4 + I_5.
        \end{aligned}
    \end{align}	

    Notice that the gradient of the entropy and relative entropy are
    \[(\nabla\eta)(U) = \begin{pmatrix}
    -\frac{u^2}{2}+\frac{\gamma}{\gamma-1}\rho^{\gamma-1}-\frac{w^2}{2}\\u\\w
    \end{pmatrix}, \quad 
    (\nabla\eta)(U|\tilde{U}) = \begin{pmatrix}
    *\\-\frac{(\rho-\tilde{\rho})}{\tilde{\rho}}(u-\tilde{u})\\-\frac{(\rho-\tilde{\rho})}{\tilde{\rho}}(w-\tilde{w})
    \end{pmatrix}.\]
    Using integration by parts, each term from $I_1$ to $I_5$ can be computed as
    \begin{align*}
        I_1 &= \int_{\mathbb{R}_+}a_xq(U;\tilde{U})\,dx +a(t,0)q(U;\tilde{U})(t,0)\\
        I_2 &= -\int_{\mathbb{R}_+} \Big[ a\tilde{u}_x \bigl(\rho|u-\tilde{u}|^2+p(\rho|\tilde{\rho}) \bigr)+a\tilde{w}_x\rho(u-\tilde{u})(w-\tilde{w}) \Big]\,dx,\\
        I_3 &=-\mu \int_{\mathbb{R}_+}a|(u-\tilde{u})_x|^2\,dx -\mu\int_{\mathbb{R}_+}a_x(u-\tilde{u})(u-\tilde{u})_x\,dx\\
        &\quad -\sqrt{\kappa}\int_{\mathbb{R}_+}a_x(u-\tilde{u})\rho^{3/2}(w-\tilde{w})_x\,dx+\sqrt{\kappa}\int_{\mathbb{R}_+}a_x(w-\tilde{w})\rho^{3/2}(u-\tilde{u})_x\,dx\\
        &\quad + \left. \left[ -\mu a(u-\tilde{u})(u-\tilde{u})_x-\sqrt{\kappa}a(u-\tilde{u})\rho^{3/2}(w-\tilde{w})_x +\sqrt{\kappa}a(w-\tilde{w})\rho^{3/2}(u-\tilde{u})_x \right] \right|_{x=0},\\
        I_4 &=-\sqrt{\kappa}\int_{\mathbb{R}_+}a_x(\rho^{3/2}-\tilde{\rho}^{3/2})((u-\tilde{u})\tilde{w}_x-(w-\tilde{w})\tilde{u}_x)\,dx\\
        &\quad -\sqrt{\kappa}\int_{\mathbb{R}_+}a(\rho^{3/2}-\tilde{\rho}^{3/2})\left((u-\tilde{u})_x\tilde{w}_x-(w-\tilde{w})_x\tilde{u}_x\right)\,dx\\
        &\quad + \left. \left[-\sqrt{\kappa}a(u-\tilde{u})(\rho^{3/2}-\tilde{\rho}^{3/2})\tilde{w}_x +\sqrt{\kappa}a(w-\tilde{w})(\rho^{3/2}-\tilde{\rho}^{3/2})\tilde{u}_x \right] \right|_{x=0}, \\
        I_5 &= -\int_{\mathbb{R}_+}a\frac{\rho-\tilde{\rho}}{\tilde{\rho}}\left[(u-\tilde{u})\left(\mu \tilde{u}_{xx}+(\sqrt{\kappa}\tilde{\rho}^{3/2}\tilde{w}_x)_x\right)-(w-\tilde{w})(\sqrt{\kappa}\tilde{\rho}^{3/2}\tilde{u}_x)_x\right]\,dx.
    \end{align*}
    
    Notice that 
    \[-\sigma\int_{\R_+}a_x\eta(U|\tilde{U})\,dx +I_1=\int_{\R_+}a_x(u-\sigma)\eta(U|\tilde{U})\,dx+\int_{\R_+}a_x(p-\tilde{p})(u-\tilde{u})\,dx.\]
	Thus, by decomposing $u-\sigma = (u-\tilde{u}) - (\sigma-\tilde{u})$ and $\rho = \tilde{\rho} + (\rho-\tilde{\rho})$, the leading parts $-(\sigma-\tilde{u})$ and $\tilde{\rho}$ generate the good terms in $\mathcal{J}^{good}$, while the perturbations $(u-\tilde{u})$ and $(\rho-\tilde{\rho})$ are absorbed as errors into $\mathcal{J}^{bad}$.

Finally, substituting the computations for $I_1$ through $I_5$ into \eqref{est:rel-ent}, we obtain the desired identity.

\end{proof}

Among the terms in $\mathcal{J}^{bad}$, the term
\[
    \int_{\mathbb{R}_+} a_x (p-\tilde{p})(u-\tilde{u}) \,dx
\]
is the worst term since it couples the velocity perturbation with the density perturbation. To control this term, we exploit one of the term $-\int_{\mathbb{R}_+} a_x (\sigma-\tilde{u}) \frac{p(\rho|\tilde{\rho})}{\gamma-1} \,dx$ in $J^{good}$. 

\begin{lemma}
    Under the assumptions of Proposition \ref{prop:apriori}, we have
	\begin{align*}
		&-\int_{\R_+}a_x(\sigma-\tilde{u})\frac{p(\rho|\tilde{\rho})}{\gamma-1}\,dx +\int_{\R_+}a_x(p-\tilde{p})(u-\tilde{u})\,dx\\
		&\le -\int_{\R_+}a_x\frac{\sigma-\tilde{u}}{2}\gamma\tilde{\rho}^{\gamma-2}\left((\rho-\tilde{\rho})-\frac{\tilde{\rho}}{\sigma-\tilde{u}}(u-\tilde{u})\right)^2\,dx+\int_{\R_+}a_x\frac{\gamma\tilde{\rho}^{\gamma}}{2(\sigma-\tilde{u})}(u-\tilde{u})^2\,dx\\
		&\quad +C\int_{\R_+}a_x|\rho-\tilde{\rho}|^2|u-\tilde{u}|\,dx +C\int_{\R_+} (\sigma-\tilde{u})a_x|\rho-\tilde{\rho}|^3 \, dx.
	\end{align*}
\end{lemma}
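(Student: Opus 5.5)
This is a pointwise (in $x$) algebraic inequality, weighted by $a_x>0$. The plan is to expand both the relative pressure and the pressure difference to second order around $\tilde\rho$, and then complete the square in the quadratic part.

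Set $\phi:=\rho-\tilde\rho$, $\psi:=u-\tilde u$, and write $b:=\sigma-\tilde u>0$, which is positive by the remark after Lemma \ref{lem:shock-profile}.

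\textbf{Step 1: Taylor expansions.} Since $p(\rho)=\rho^\gamma$ and $\rho,\tilde\rho$ stay in a compact subset of $(0,\infty)$ (by \eqref{p-assum} and Sobolev embedding, for $\varepsilon_0$ small), Taylor's formula gives
\[
\frac{p(\rho|\tilde\rho)}{\gamma-1}=\frac{\gamma\tilde\rho^{\gamma-2}}{2}\phi^2+O(|\phi|^3),\qquad p-\tilde p=\gamma\tilde\rho^{\gamma-1}\phi+O(\phi^2),
\]
with uniform constants. Substituting into the left-hand side:
\begin{itemize}
\item the cubic remainder from the relative pressure, multiplied by $a_x b$, produces $C\int a_x b|\phi|^3$;
\item the remainder $O(\phi^2)\psi$ from the pressure difference produces $C\int a_x|\phi|^2|\psi|$.
\end{itemize}
What remains is the quadratic form
\[
-\frac{b}{2}\gamma\tilde\rho^{\gamma-2}\phi^2+\gamma\tilde\rho^{\gamma-1}\phi\psi .
\]

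\textbf{Step 2: complete the square.} Factor out $\frac{b}{2}\gamma\tilde\rho^{\gamma-2}$:
\[
-\frac{b}{2}\gamma\tilde\rho^{\gamma-2}\Big(\phi^2-\frac{2\tilde\rho}{b}\phi\psi\Big)
=-\frac{b}{2}\gamma\tilde\rho^{\gamma-2}\Big(\phi-\frac{\tilde\rho}{b}\psi\Big)^2+\frac{b}{2}\gamma\tilde\rho^{\gamma-2}\frac{\tilde\rho^2}{b^2}\psi^2 .
\]
The last term equals $\frac{\gamma\tilde\rho^\gamma}{2b}\psi^2$. Multiplying by $a_x$ and integrating over $\R_+$ gives exactly the first two terms on the right-hand side of the lemma. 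Since this step is an identity, the inequality reduces to the remainder bounds in Step 1.

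\textbf{Main obstacle.} There is no real obstacle; the only care needed is making the remainder constants uniform. This requires two facts:
\begin{itemize}
\item the lower bound $\rho\ge c>0$, obtained from the smallness of $\|\rho-\tilde\rho\|_{H^1}$;
\item the lower bound $b\ge c>0$, which lets the cubic term carry the factor $b$. Equivalently, one may note that $b$ is bounded above and below, so $C\int a_x|\phi|^3$ and $C\int a_x b|\phi|^3$ are interchangeable.
\end{itemize}
The sign $a_x>0$ is used throughout, so that pointwise bounds integrate directly.
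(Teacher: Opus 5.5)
Your proof is correct and matches the paper's approach: the paper omits this proof, saying only that it is a direct application of Taylor expansion as in \cite{KOW25}, and your second-order expansion of $p(\rho|\tilde\rho)$ and $p-\tilde p$, followed by completing the square in $(\rho-\tilde\rho,\,u-\tilde u)$, is that argument. For the cubic remainder you only need $\sigma-\tilde u>0$, since $-(\sigma-\tilde u)R\le(\sigma-\tilde u)|R|$; so the lower bound on $\sigma-\tilde u$ is not actually required there, though invoking it does no harm.
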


\begin{proof}
	Since the proof is a direct application of the Taylor expansion as in \cite{KOW25}, we omit the proof.
\end{proof}

Using the above lemma, we can estimate the right-hand side of \eqref{est:weighted_relative_ent} by the shift generation term $Y$, the bad term $B$, the good term $G$, and the boundary term $P$.

\begin{lemma}\label{lem:XBG} 
Under the assumptions of Proposition \ref{prop:apriori}, we have
\begin{equation}\label{eq:decomp_XBG}
    \frac{d}{dt}\int_{\mathbb{R}_+} a \eta(U|\widetilde{U}) \, dx \le \dot{X}Y + B - G + P,
\end{equation}
where the shift generating term $Y$ is decomposed as $Y := \sum_{i=1}^4 Y_i$, with
\begin{align*}
Y_1 &:= \int_{\mathbb{R}_+} a \frac{p'(\tilde{\rho})}{\sigma-\tilde{u}} \tilde{\rho}_x (u-\tilde{u}) \, dx + \int_{\mathbb{R}_+} a \tilde{\rho} (u-\tilde{u}) \tilde{u}_x \, dx,\\
Y_2 &:=  - \int_{\mathbb{R}_+} a_x  \frac{\rho (u- \tilde{u})^2}{2} \, dx - \int_{\mathbb{R}_+} a_x \frac{p(\rho|\tilde{\rho})}{\gamma-1} \, dx,\\
Y_3 &:= \int_{\mathbb{R}_+} a \frac{p'(\tilde{\rho})}{\tilde{\rho}} \left( (\rho-\tilde{\rho})-\frac{\tilde{\rho}}{\sigma-\tilde{u}}(u-\tilde{u}) \right)\tilde{\rho}_x \, dx + \int_{\mathbb{R}_+} a (\rho-\tilde{\rho})(u-\tilde{u}) \tilde{u}_x \, dx,\\
Y_4 &:=- \int_{\mathbb{R}_+} a_x  \frac{\rho(w-\tilde{w})^2}{2} \, dx  +  \int_{\mathbb{R}_+} a \rho 
(w-\tilde{w})\tilde{w}_x\, dx.
\end{align*}

Moreover, the bad term $B$ is collected by $B := \sum_{i=1}^{11} B_i$, where
\begin{align*}
	B_1&:=\int_{\mathbb{R}_+} a_x\frac{\gamma\tilde{\rho}^{\gamma}}{2(\sigma-\tilde{u})}(u-\tilde{u})^2\,dx, \quad B_2:=-\int_{\mathbb{R}_+}a\tilde{u}_x\rho(u-\tilde{u})^2\,dx,\\
	B_3&:=-\int_{\mathbb{R}_+}a\tilde{u}_x\frac{\gamma(\gamma-1)}{2}\tilde{\rho}^{\gamma-2}(\rho-\tilde{\rho})^2\,dx, \quad B_4:=-\mu\int_{\mathbb{R}_+}a_x(u-\tilde{u})(u-\tilde{u})_x\,dx,\\
	B_5&:=-\int_{\mathbb{R}_+}a\tilde{w}_x\rho(u-\tilde{u})(w-\tilde{w})\,dx, \quad B_6:=-\int_{\mathbb{R}_+}a_x\rho^{3/2}(u-\tilde{u})(w-\tilde{w})_x\,dx,\\
	B_7&:=-\int_{\mathbb{R}_+}a_x\rho^{3/2}(u-\tilde{u})_x(w-\tilde{w})\,dx,\\
	B_8&:=-\sqrt{\kappa}\int_{\mathbb{R}_+}a_x(\rho^{3/2}-\tilde{\rho}^{3/2})((u-\tilde{u})\tilde{w}_x-(w-\tilde{w})\tilde{u}_x)\,dx,\\
	B_9&:=-\sqrt{\kappa}\int_{\mathbb{R}_+}a(\rho^{3/2}-\tilde{\rho}^{3/2})((u-\tilde{u})_x\tilde{w}_x-(w-\tilde{w})_x\tilde{u}_x)\,dx,\\
	B_{10}&:=-\int_{\mathbb{R}_+}a\frac{\rho-\tilde{\rho}}{\tilde{\rho}}\left[(u-\tilde{u})\left(\mu\tilde{u}_{xx}+(\sqrt{\kappa}\tilde{\rho}^{3/2}\tilde{w}_x)_x\right)-(w-\tilde{w})(\sqrt{\kappa}\tilde{\rho}^{3/2}\tilde{u}_x)_x\right]\,dx,\\
	B_{11}&:=C\int_{\mathbb{R}_+}a_x\left(|\rho-\tilde{\rho}|+|u-\tilde{u}|+|w-\tilde{w}|\right)^3\,dx,
\end{align*}
and the good term $G$ consists of $G := G_1 + G_2 + G_3 + D_{u_1}$, where
\begin{align*}
	G_1 &= \int_{\mathbb{R}_+}a_x\frac{\sigma-\tilde{u}}{2}\gamma\tilde{\rho}^{\gamma-2}\Bigl[(\rho-\tilde{\rho})-\frac{\tilde{\rho}}{\sigma-\tilde{u}}(u-\tilde{u})\Bigr]^2\,dx,\\
	G_2&=\int_{\mathbb{R}_+}a_x \frac{\sigma-\tilde{u}}{2}\tilde{\rho}|u-\tilde{u}|^2\,dx,\quad G_3=\int_{\mathbb{R}_+}a_x\frac{\sigma-\tilde{u}}{2}\tilde{\rho}|w-\tilde{w}|^2\,dx,\\
	D_{u_1}&=\mu\int_{\mathbb{R}_+}a|(u-\tilde{u})_x|^2\,dx.
\end{align*}
\end{lemma}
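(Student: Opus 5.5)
The plan is to start from the identity \eqref{est:weighted_relative_ent} of Lemma \ref{lem:weighted-rel-entropy} and regroup its terms. The boundary term $P$ is kept unchanged. The remaining work splits into two parts: rewriting $Y$ exactly as $Y_1+\dots+Y_4$, and bounding $\mathcal{J}^{bad}-\mathcal{J}^{good}$ by $B-G$.

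\textbf{Step 1: the shift term.} We rewrite $Y$ algebraically. Its first piece, $-\int a_x\eta(U|\tilde U)\,dx$, expands into the two integrals of $Y_2$ together with the $w$-part $-\int a_x\rho(w-\tilde w)^2/2\,dx$. The $w$-part and $\int a\rho(w-\tilde w)\tilde w_x\,dx$ together form $Y_4$.

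For the density term, note that $\gamma\tilde\rho^{\gamma-2}=p'(\tilde\rho)/\tilde\rho$. We add and subtract $\frac{\tilde\rho}{\sigma-\tilde u}(u-\tilde u)$ inside $(\rho-\tilde\rho)$, which gives the first integral of $Y_3$ plus
\[
\int a\frac{p'(\tilde\rho)}{\sigma-\tilde u}\tilde\rho_x(u-\tilde u)\,dx .
\]
Finally, we split $\rho=\tilde\rho+(\rho-\tilde\rho)$ in $\int a\rho(u-\tilde u)\tilde u_x\,dx$. The $\tilde\rho$ part completes $Y_1$, and the remainder is the second integral of $Y_3$. This step is an identity, so $\dot XY=\dot X\sum_i Y_i$ holds exactly.

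\textbf{Step 2: good terms.} In $\mathcal J^{good}$ we keep $G_2$, $G_3$ and $D_{u_1}$ exactly. We pair the pressure part $-\int a_x(\sigma-\tilde u)p(\rho|\tilde\rho)/(\gamma-1)\,dx$ with the worst bad term $\int a_x(p-\tilde p)(u-\tilde u)\,dx$ and apply the preceding lemma. This produces $-G_1+B_1$ plus two error integrals,
\[
C\int a_x|\rho-\tilde\rho|^2|u-\tilde u|\,dx \quad\text{and}\quad C\int a_x(\sigma-\tilde u)|\rho-\tilde\rho|^3\,dx .
\]
Since $\sigma-\tilde u$ is bounded, both errors are absorbed into $B_{11}$.

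\textbf{Step 3: the remaining bad terms.} We go through $\mathcal J^{bad}$ line by line.

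\emph{Cubic terms.} These are $\int a_x(u-\tilde u)p(\rho|\tilde\rho)\,dx$, $\int a_x(\rho(u-\tilde u)^3-(\sigma-\tilde u)(\rho-\tilde\rho)(u-\tilde u)^2)\,dx$ and the analogous $w$-line. We use $p(\rho|\tilde\rho)\le C|\rho-\tilde\rho|^2$ and the boundedness of $\rho$ and $\sigma-\tilde u$; the latter follows from the a priori smallness \eqref{p-assum} and Sobolev embedding. Each of these terms is then at most $C\int a_x(|\rho-\tilde\rho|+|u-\tilde u|+|w-\tilde w|)^3\,dx\le B_{11}$.

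\emph{The term $-\int a\tilde u_x p(\rho|\tilde\rho)\,dx$.} We Taylor-expand
\[
p(\rho|\tilde\rho)=\tfrac{\gamma(\gamma-1)}2\tilde\rho^{\gamma-2}(\rho-\tilde\rho)^2+O(|\rho-\tilde\rho|^3).
\]
The quadratic part gives $B_3$. For the cubic remainder, note that $-\tilde u_x=\sqrt\delta\,a_x$ and $a\le 3/2$, so this remainder is also dominated by $B_{11}$.

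\emph{Other lines.} The term $-\int a\tilde u_x\rho|u-\tilde u|^2\,dx$ is $B_2$, and the $\tilde w_x$ term is $B_5$. The viscous and capillary $a_x$-terms are $B_4$, $B_6$ and $B_7$; the factors $\pm\sqrt\kappa$ can be absorbed by adjusting signs or constants, and the inequality only needs the upper bound. The last three lines are exactly $B_8$, $B_9$ and $B_{10}$.

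Summing Steps 1--3 gives \eqref{eq:decomp_XBG}.

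The main obstacle is mostly bookkeeping. The delicate points are two: the algebraic rearrangement of $Y$ into the specific combinations $Y_1$ and $Y_3$, where the $\frac{\tilde\rho}{\sigma-\tilde u}$ insertion must match the one used in $G_1$, and making sure that every cubic remainder carries an $a_x$ factor, or a $\tilde u_x$ factor comparable to $a_x$. I expect both to go through by the computations indicated.
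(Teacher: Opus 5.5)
Your route is the paper's own: the lemma comes from combining the identity of Lemma \ref{lem:weighted-rel-entropy} with the preceding Taylor-expansion lemma and regrouping terms. Your Steps 1--2 and most of Step 3 check out term by term:
\begin{itemize}
\item the splitting of $Y$ into $Y_1,\dots,Y_4$ is an exact identity;
\item the pairing yields $-G_1+B_1$ plus cubic errors, which go into $B_{11}$;
\item the Taylor split of $-\int a\tilde u_x p(\rho|\tilde\rho)\,dx$ gives $B_3$ plus a remainder controlled through $|\tilde u_x|=\sqrt\delta\,a_x$;
\item $B_2,B_4,B_5,B_8,B_9,B_{10}$ appear verbatim.
\end{itemize}

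The step that fails is your handling of $B_6$ and $B_7$. Integrating by parts in $I_3$, the corresponding terms of $\mathcal J^{bad}$ are
\[
-\sqrt\kappa\int_{\R_+}a_x\rho^{3/2}(u-\tilde u)(w-\tilde w)_x\,dx=\sqrt\kappa\,B_6,\qquad
\sqrt\kappa\int_{\R_+}a_x\rho^{3/2}(u-\tilde u)_x(w-\tilde w)\,dx=-\sqrt\kappa\,B_7 .
\]
So, relative to the statement, $B_6$ is missing the factor $\sqrt\kappa$, and $B_7$ is missing $\sqrt\kappa$ \emph{and} has the wrong sign.

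This cannot be ``absorbed by adjusting signs or constants'' on the grounds that only an upper bound is needed. $B_6$ and $B_7$ are signed quadratic integrals of indefinite sign. Hence neither $\sqrt\kappa B_6\le B_6$ nor $-\sqrt\kappa B_7\le B_7$ holds in general; the latter would require $B_7\ge 0$. Nor can the discrepancy be hidden in $B_{11}$: it involves $(u-\tilde u)_x$ and $(w-\tilde w)_x$, while $B_{11}$ is cubic in undifferentiated perturbations.

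The correct move is to flag this as a misprint in the statement and define
\[
B_6:=-\sqrt\kappa\int_{\R_+} a_x\rho^{3/2}(u-\tilde u)(w-\tilde w)_x\,dx,\qquad
B_7:=\sqrt\kappa\int_{\R_+} a_x\rho^{3/2}(u-\tilde u)_x(w-\tilde w)\,dx .
\]
Alternatively, replace both by absolute-value majorants. With that correction your argument is complete. Nothing downstream changes, because in the proof of Lemma \ref{lem:lead} both terms are estimated only in absolute value, via Young's inequality with generic constants.
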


\subsection{Estimates for \texorpdfstring{$\dot{X}Y+B-G$}{X-dot Y + B - G}}

In this part, we derive the estimate of the term $\dot{X}Y+B-G$ in \eqref{eq:decomp_XBG}.

\begin{lemma} \label{lem:lead}
Under the assumptions of Proposition \ref{prop:apriori}, there exists a constant $C>0$ such that, for all $t \in [0,T]$,
 \begin{equation*}
        \begin{aligned}
            \dot{X}Y + B - G &\leq -\frac{\delta}{4M}|\dot{X}|^2   -\frac{1}{2}G_1- \frac{1}{2}G_3 -\frac{\gamma+1}{10}\rho_+G^S  - \frac{1}{4}D_{u_1}+C\delta^{3/4}\|(w-\tilde{w})_x\|_{L^2}^2.
        \end{aligned}
    \end{equation*}
\end{lemma}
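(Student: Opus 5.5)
By the definition of the shift \eqref{def:X}, $\dot X=-\frac{M}{\delta}Y_1$. Hence $\dot X Y_1=-\frac{\delta}{M}|\dot X|^2$, and I would split this as
\[
-\frac{\delta}{2M}|\dot X|^2-\frac{M}{2\delta}|Y_1|^2 .
\]
The remaining shift terms $\dot X(Y_2+Y_3+Y_4)$ are handled by Young's inequality:
\[
|\dot X(Y_2+Y_3+Y_4)|\le \frac{\delta}{4M}|\dot X|^2+\frac{C}{\delta}\bigl(|Y_2|^2+|Y_3|^2+|Y_4|^2\bigr).
\]
Using $|a_x|\le C\delta^{3/2}e^{-c\delta|\xi|}$ and $|\tilde\rho_x|,|\tilde u_x|,|\tilde w_x|\le C\delta^2 e^{-c\delta|\xi|}$, together with the smallness $\varepsilon_0$, each of these squares is small. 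For example, by Cauchy–Schwarz,
\[
\frac1\delta|Y_2|^2\le \frac{C}{\delta}\|a_x\|_{L^1}\,\varepsilon\int a_x(\ldots)^2\le C\sqrt\delta\,\varepsilon\,(G_1+G_2).
\]
The same argument applies to $Y_3$ (whose first part is a $G_1$-type quantity) and to $Y_4$ (a $G_3$-type quantity). The part of $Y_4$ involving $\tilde w_x$ is controlled by $\frac1\delta\|\tilde w_x\|_{L^1}^2\varepsilon^2$-type bounds, or alternatively by $\delta^{3/4}\|(w-\tilde w)_x\|^2$ after a Poincaré-type step. Here $G_2$ is comparable to $\sqrt\delta\,G^S$, since $a_x=|\tilde u_x|/\sqrt\delta$.

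The main step is the sharp weighted Poincaré-type inequality of Kang–Vasseur, used to make the leading terms negative. Introduce the variable $y=\frac{u_- -\tilde u}{\delta}\in(0,1)$ and set $W:=u-\tilde u$. The leading quantities are
\[
-\frac{M}{2\delta}|Y_1|^2+B_1+B_2-G_2-\tfrac{3}{4}D_{u_1}.
\]
Use $p'(\tilde\rho)\tilde\rho_x/(\sigma-\tilde u)\approx \tilde\rho\,\tilde u_x$ up to $O(\delta)$, which follows from the profile ODE. Then
\[
Y_1\approx 2\rho_+\int \tilde u_x W\,dx=-2\rho_+\delta\int_0^1 W\,dy,
\]
plus a boundary remainder, because $y$ only ranges over $[y(0),1)$. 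The term $B_1+B_2-G_2$ becomes, to leading order,
\[
\frac{(\gamma+1)\rho_+}{2\sqrt\delta}\,\delta\int W^2\,dy
\]
(with the exact constant fixed via $\sigma-\tilde u\approx\sqrt{p'(\rho_+)}$), and $D_{u_1}\ge \mu\int|W_y|^2 \frac{dy}{dx}\,dy$ with $\frac{dy}{dx}\approx \frac{c}{\delta}\,\delta^2y(1-y)$. The choice $M=\frac{2(\gamma+1)}{\rho_+}$ is what makes the inequality
\[
\int_0^1 |f-\bar f|^2\,dy\le \frac12\int_0^1 y(1-y)|f_y|^2\,dy
\]
close. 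It leaves $-\frac{\gamma+1}{10}\rho_+G^S$ as a net good term. This is the obstacle: the constants must be tracked carefully, and the contributions from $x\in(-\infty,0)$, i.e.\ $y<y(0)$, must be shown to be $O(e^{-c\delta\beta})$. Since $\beta$ is large, $1-y(0)$ is not small but $y(0)\le Ce^{-c\delta\beta}$, so truncating to $[y(0),1]$ costs only that much, and this error is absorbed by $\varepsilon_0,\delta$.

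The rest are error terms. $B_3$ is bounded by $G_1$ and $G^S$ after writing $\rho-\tilde\rho$ as the $G_1$ combination plus $\frac{\tilde\rho}{\sigma-\tilde u}W$, with coefficient $\sqrt\delta$. $B_4$ is bounded by $\sqrt\delta D_{u_1}+\sqrt\delta G^S$. $B_6$ and $B_7$ carry $a_x\sim\delta^{3/2}$; Young's inequality gives $\delta^{3/4}\|(w-\tilde w)_x\|^2+\delta^{3/4}\int a_x W^2$ for $B_6$, and $\sqrt\delta D_{u_1}+\sqrt\delta G_3$ for $B_7$. $B_5$, $B_8$, $B_9$ and $B_{10}$ contain the profile derivatives $|\tilde\rho_x|,|\tilde u_{xx}|\lesssim\delta|\tilde u_x|$, so they are bounded by $C\delta(G^S+G_1+G_3)+C\varepsilon(D_{u_1}+\|(w-\tilde w)_x\|^2)$, using $|\rho-\tilde\rho|\le\varepsilon$. $B_{11}$ is bounded by $C\varepsilon(G_1+G_2+G_3)$. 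Choosing $\delta_0$ and $\varepsilon_0$ small absorbs all of these, which yields the stated coefficients.
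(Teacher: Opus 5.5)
Your skeleton matches the paper's: the identity $\dot X=-\frac{M}{\delta}Y_1$, the variable $y$, Lemma~\ref{lem:jacobian}, the weighted Poincar\'e inequality, and $M=2(\gamma+1)/\rho_+$. However, the order-counting in $\delta$ is wrong at the decisive step.

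\textbf{The leading bad term is misattributed.} Since $(\sigma-\tilde u)^2=p'(\tilde\rho)+O(\delta)$, the pair $B_1-G_2$ is lower order, $B_1-G_2\le C\delta^{3/2}\int_{y_0}^1f^2\,dy$, so it cannot supply the constant $\frac{\gamma+1}{2}\rho_+$. That constant, in front of $G^S=\delta\int_{y_0}^1f^2\,dy$, comes from $B_2\approx\rho_+G^S$ \emph{together with} $B_3\approx\frac{\gamma-1}{2}\rho_+G^S$. Your claim that $B_3$ is an error of size $\sqrt\delta(G_1+G^S)$ is false. Write $\rho-\tilde\rho=\bigl(\rho-\tilde\rho-\frac{\tilde\rho}{\sigma-\tilde u}(u-\tilde u)\bigr)+\frac{\tilde\rho}{\sigma-\tilde u}(u-\tilde u)$. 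Only the first piece is small (it gives $C\delta^{1/4}G_1$). The second contributes $\frac{\gamma(\gamma-1)}{2}\frac{\tilde\rho^{\gamma}}{(\sigma-\tilde u)^{2}}|\tilde u_x||u-\tilde u|^2\approx\frac{\gamma-1}{2}\tilde\rho\,|\tilde u_x||u-\tilde u|^2$, which is an $O(1)$ multiple of $G^S$. The sharp Poincar\'e step closes precisely because $\rho_++\frac{\gamma-1}{2}\rho_+$ equals the Jacobian constant $\frac{\gamma+1}{2}\rho_+$. Moving $B_3$ into the errors makes the argument rest on a false inequality. As written, your leading term $\frac{(\gamma+1)\rho_+}{2\sqrt\delta}\,\delta\int W^2dy$ is also $\delta^{-1/2}$ times too large to be dominated by $D_{u_1}\ge\delta(\frac{\gamma+1}{2}\rho_+-C\delta)\int_{y_0}^1(y-y_0)(1-y)|\pa_yf|^2dy$.

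\textbf{$G_2$ cannot be used to absorb errors.} It is consumed by $B_1$. Also, because $a_x=|\tilde u_x|/\sqrt\delta$, one has $G_2\sim\delta^{-1/2}G^S$, not $\sqrt\delta\,G^S$. Hence your bound $B_{11}\le C\e(G_1+G_2+G_3)$ leaves $C\e\delta^{-1/2}G^S$. This cannot be absorbed by $\frac{\gamma+1}{10}\rho_+G^S$, since $\e_0$ is independent of $\delta$. The same problem occurs for $Y_2$: $\frac{C}{\delta}\|a_x\|_{L^1}\le C\delta^{-1/2}$, not $C\sqrt\delta$. So your $L^\infty$-route actually gives $\frac{C}{\delta}|Y_2|^2\le C\e^2\delta^{-1/2}(G_1+G_2)$, i.e. a term $C\e^2\delta^{-1}G^S$. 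The fixes, as in the paper, are:
\begin{itemize}
\item For $Y_2$, use $L^2$-smallness and $\|a_x\|_{L^\infty}\le C\delta^{3/2}$. Then $|Y_2|\le C\e\,\delta^{3/4}\bigl(\int a_x(|\rho-\tilde\rho|^2+|u-\tilde u|^2)\,dx\bigr)^{1/2}$, hence $\frac{C}{\delta}|Y_2|^2\le C\e^2(\sqrt\delta\,G_1+G^S)$.
\item For the $|u-\tilde u|^3$ part of $B_{11}$, use $\|u-\tilde u\|_{L^\infty}^2\le 2\|u-\tilde u\|_{L^2}\|(u-\tilde u)_x\|_{L^2}$ and $\int a_x\,dx\le\sqrt\delta$. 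This gives $C\e(D_{u_1}+G^S)$.
\end{itemize}

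\textbf{Smaller points.}
\begin{itemize}
\item The $\tilde w_x$-part of $Y_4$ must be bounded quadratically in the perturbation, e.g. by $C\delta^{3/2}\int a_x|w-\tilde w|\,dx\le C\delta^{7/4}G_3^{1/2}$. A bound like $\delta^{-1}\|\tilde w_x\|_{L^1}^2\e^2$ does not vanish with the perturbation.
\item The cut-off at $y_0$ should be handled by the Poincar\'e inequality on $[y_0,1]$ with weight $(y-y_0)(1-y)$ and $y_0<\frac16$. This only changes the coefficient of $(\int_{y_0}^1f\,dy)^2$, which the $|Y_1|^2$ term absorbs. An additive $O(e^{-c\delta\beta})$ error is not allowed by the statement.
\item $B_9$ should be estimated by pairing with $|\tilde u_x|$, giving $C\delta\|(w-\tilde w)_x\|_{L^2}^2$. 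Your $C\e\|(w-\tilde w)_x\|_{L^2}^2$ does not fit the stated right-hand side.
\end{itemize}
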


\begin{proof}
   
We introduce new variables $y$ and $f$:
\begin{equation}\label{def:y_and_f}
	y(t,x):=\frac{u_--\tilde{u}(x-\sigma t-X(t)-\beta)}{\delta}, \quad f(t,y):=(u-\tilde{u})\circ y^{-1}.
\end{equation}
Since $\tilde{u}_x<0$, the mapping $x\mapsto y(x,t)$ is well-defined and satisfies
\[
\frac{dy}{dx} = -\frac{\tilde{u}_x(x-\sigma t - X(t) -\beta)}{\delta} >0,
\qquad
\lim_{x\to 0} y = y_0,
\qquad
\lim_{x\to +\infty} y = 1,
\]
where
\[
y_0(t):= \frac{u_--\tilde{u}(-\sigma t - X(t) - \beta)}{\delta} >0.
\]
Moreover, by the definition of the shift in \eqref{def:X} and the a priori assumption \eqref{p-assum}, we have $|\dot{X}(t)|\le C\varepsilon$, and hence $|X(t)|\le C\varepsilon t$ for all $0\le t\le T$. In particular, for sufficiently small $\varepsilon>0$, we have
\[
|X(t)|\le \frac{\sigma}{2}t,
\qquad
\forall  t\le T,
\]
which implies
\begin{equation}\label{small-X}
-\sigma t - X(t) - \beta \le -\frac{\sigma}{2}t - \beta < 0,
\qquad
\forall  t\le T.
\end{equation}
Consequently, we have
\[
y_0(t)\le Ce^{-c\delta|\sigma t + X(t) + \beta|} \le Ce^{-c\delta\beta}.
\]
In particular, by choosing $\beta$ sufficiently large, we may assume $y_0<\frac16$ for any $t\in [0,T]$. Now, we split the terms as
\begin{align*}
	\dot{X}Y+B-G = \dot{X}Y + (B_1-G_2) +\sum_{i=2}^{11}B_{i}-G_1-G_3-D_{u_1}.
\end{align*}
In the following, we estimate each term on the right-hand side.

\vspace{2mm}
\noindent\textbf{Estimates for $B_1-G_2, B_2, \ldots, B_{11}$.} 
Since the terms $B_1-G_2$, $B_2$, $B_3$, and $B_4$ are the same as in \cite[Lemma 4.5]{KOW25}, we borrow their estimates as follows:
\begin{align*}
	&B_1-G_2\le C\delta\sqrt{\delta}\int_{y_0}^1f^2\,dy,\quad B_2\le \delta(\rho_++C\varepsilon+C\sqrt{\delta})\int_{y_0}^1f^2\,dy,\\
	&B_3\le C\delta^{1/4}G_1 +\left(\frac{\gamma-1}{2}\rho_++C\delta^{1/4}\right)\delta\int_{y_0}^1f^2\,dy,\quad B_4\le \frac{1}{40}D_{u_1}+C\delta^2\int_{y_0}^1f^2\,dy.
\end{align*}
Next, we use $|\tilde{w}_x|\le C(|\tilde{\rho}_{xx}|+|\tilde{\rho}_x|^2)\le C\delta|\tilde{\rho}_x|\le C\delta|\tilde{u}_x|$ to obtain
\begin{align*}
	B_5&\le C\delta\int_{\R_+}|\tilde{u}_x|\rho|u-\tilde{u}||w-\tilde{w}|\,dx\le C\delta\int_{\R_+}\rho|\tilde{u}_x||u-\tilde{u}|^2\,dx +C\delta\int_{\R_+}\rho|\tilde{u}_x||w-\tilde{w}|^2\,dx\\
	&\le C\delta^2\int_{y_0}^1f^2\,dy + C\delta\sqrt{\delta}\int_{\R_+}a_x|w-\tilde{w}|^2\,dx\le C\delta^2\int_{y_0}^1f^2\,dy + C\delta\sqrt{\delta}G_3.
\end{align*}
For $B_6$, we use Young's inequality and $|a_x|\le \frac{|\tilde{u}_x|}{\sqrt{\delta}}\le \delta^{3/2}$ to obtain
\begin{align*}
    B_6 &\le C\int_{\R_+}(a_x)^{3/2}|u-\tilde{u}|^2\,dx +C\int_{\R_+}(a_x)^{1/2}|(w-\tilde{w})_x|^2\,dx \\
    &\le C\delta^{5/4}\int_{y_0}^1f^2\,dy+C\delta^{3/4}\|(w-\tilde{w})_x\|_{L^2}^2.
\end{align*}
The term $B_7$ can be estimated as
\[B_7\le \frac{1}{40}D_{u_1} +C\int_{\R_+}|a_x|^2(w-\tilde{w})^2\,dx\le \frac{1}{40}D_{u_1} +C\delta^{3/2}G_3.\]
For $B_8$, we use $|\rho^{3/2}-\tilde{\rho}^{3/2}|\le C|\rho-\tilde{\rho}|$ and $|\tilde{w}_x|\le C\delta|\tilde{u}_x|$ to get
\begin{align*}
	B_8
	&\le C\int_{\R_+}a_x\left|(\rho-\tilde{\rho})-\frac{\tilde{\rho}}{\sigma-\tilde{u}}(u-\tilde{u})\right||u-\tilde{u}||\tilde{w}_x|\,dx\\
	&\quad + C\int_{\R_+}a_x\left|(\rho-\tilde{\rho})-\frac{\tilde{\rho}}{\sigma-\tilde{u}}(u-\tilde{u})\right||w-\tilde{w}||\tilde{u}_x|\,dx\\
	&\quad + C\int_{\R_+}a_x|u-\tilde{u}|^2|\tilde{w}_x|\,dx +C\int_{\R_+}a_x|u-\tilde{u}||w-\tilde{w}||\tilde{u}_x|\,dx\\
	&\le C\delta\int_{\R_+}a_x\left|(\rho-\tilde{\rho})-\frac{\tilde{\rho}}{\sigma-\tilde{u}}(u-\tilde{u})\right|^2\,dx+C\delta\int_{\R_+}a_x|\tilde{u}_x|^2|u-\tilde{u}|^2\,dx\\
	&\quad + C\delta^2\int_{\R_+}a_x\left|(\rho-\tilde{\rho})-\frac{\tilde{\rho}}{\sigma-\tilde{u}}(u-\tilde{u})\right|^2\,dx+C\delta^2\int_{\R_+}a_x|w-\tilde{w}|^2\,dx\\
	&\quad + C\delta^{3}\int_{\R_+}a_x|u-\tilde{u}|^2\,dx + C\delta^2\int_{\R_+}a_x|u-\tilde{u}|^2\,dx +C\delta^2\int_{\R_+}a_x|w-\tilde{w}|^2\,dx\\
	&\le C\delta G_1 + C\delta^{5/2}\int_{y_0}^1f^2\,dy+C\delta^2G_3.
\end{align*}
Next, we estimate $B_9$. Using similar argument as in $B_8$, we have
\begin{align*}
	B_9
	&\le C\int_{\R_+}\left|(\rho-\tilde{\rho})-\frac{\tilde{\rho}}{\sigma-\tilde{u}}(u-\tilde{u})\right||(u-\tilde{u})_x||\tilde{w}_x|\,dx\\
	&\quad + C\int_{\R_+}\left|(\rho-\tilde{\rho})-\frac{\tilde{\rho}}{\sigma-\tilde{u}}(u-\tilde{u})\right||(w-\tilde{w})_x||\tilde{u}_x|\,dx\\
	&\quad + C\int_{\R_+}|u-\tilde{u}||(u-\tilde{u})_x||\tilde{w}_x|\,dx +C\int_{\R_+}|u-\tilde{u}||(w-\tilde{w})_x||\tilde{u}_x|\,dx\\
	&\le C\int_{\R_+}|\tilde{w}_x|\left|(\rho-\tilde{\rho})-\frac{\tilde{\rho}}{\sigma-\tilde{u}}(u-\tilde{u})\right|^2\,d x+\int_{\R_+}|\tilde{w}_x||(u-\tilde{u})_x|^2\,d x\\
	&\quad + C\int_{\R_+}|\tilde{u}_x|\left|(\rho-\tilde{\rho})-\frac{\tilde{\rho}}{\sigma-\tilde{u}}(u-\tilde{u})\right|^2\,d x+\int_{\R_+}|\tilde{u}_x||(w-\tilde{w})_x|^2\,d x\\
	&\quad + C\int_{\R_+}|\tilde{w}_x||u-\tilde{u}|^2\,dx +\int_{\R_+}|\tilde{w}_x||(u-\tilde{u})_x|^2\,dx \\
    &\quad +C\int_{\R_+}|\tilde{u}_x|^{3/2}|u-\tilde{u}|^2\,dx +C\int_{\R_+}|\tilde{u}_x|^{1/2}|(w-\tilde{w})_x|^2\,dx\\
	&\le C\delta^{1/2}G_1 + C\delta^2D_{u_1}+C\delta^{2}\int_{y_0}^1f^2\,dy + C\delta \|(w-\tilde{w})_x\|_{L^2}^2.
\end{align*}
	We use $|\tilde{u}_{xx}|\le C\delta |\tilde{u}_x|$ and $|\tilde{w}_{xx}|\le C(|\tilde{\rho}_x|^2+|\tilde{\rho}_x||\tilde{\rho}_{xx}|+|\tilde{\rho}_{xxx}|)\le C\delta |\tilde{u}_x|$ to estimate $B_{10}$ as
\begin{align*}
	B_{10}
	&\le C\delta\int_{\R_+}|\tilde{u}_x||\rho-\tilde{\rho}|(|u-\tilde{u}|+|w-\tilde{w}|)\,dx\\
	&\le C\delta^{3/2}\int_{\R_+}a_x\left|(\rho-\tilde{\rho})-\frac{\tilde{\rho}}{\sigma-\tilde{u}}(u-\tilde{u})\right|^2\,dx + C\delta\int_{\R_+}|\tilde{u}_x||u-\tilde{u}|^2\,dx+C\delta\int_{\R_+}|\tilde{u}_x||w-\tilde{w}|^2\,dx\\
	&\le C\delta^{3/2}G_1 + C\delta^2\int_{y_0}^1f^2\,dy+C\delta^{3/2}G_3.
\end{align*}
Finally, we follow the estimate in \cite[Lemma 4.5]{KOW25} to get
\begin{align*}
	B_{11}&\le C\int_{\R_+}a_x\left|\rho-\tilde{\rho}-\frac{\tilde{\rho}}{\sigma-\tilde{u}}(u-\tilde{u})\right|^3\,dx + C\int_{\R_+}a_x|u-\tilde{u}|^3\,dx +C\int_{\R_+}a_x|w-\tilde{w}|^3\,dx\\
	&\le C\varepsilon G_1 + C\varepsilon D_{u_1}+C\varepsilon\delta\int_{y_0}^1f^2\,dy+C\e G_3.
\end{align*}
Collecting all the estimates above, we get
\begin{equation} \label{est-B}
\begin{aligned}
    B -G_2 &\le \delta\left(\frac{\gamma+1}{2}\rho_++C\varepsilon+C\delta^{1/4}\right)\int_{y_0}^1f^2\,dy\\
    &\quad +C(\delta^{1/4}+\varepsilon)G_1 +\frac{1}{10}D_{u_1}+C(\e + \delta^{3/2})G_3
    +C\delta^{3/4}\|(w-\tilde{w})_x\|_{L^2}^2.
\end{aligned}
\end{equation}

\vspace{2mm}
\noindent \textbf{Estimate of $D_{u_1}$.}
Next, we estimate $D_{u_1}$. Using the change of variable $x\mapsto y$, $D_{u_1}$ can be represented as
\begin{equation}\label{est:Du1}
D_{u_1}= \mu\int_{\R_+}a|(u-\tilde{u})_x|^2\,dx\ge\mu\int_{y_0}^1|\pa_yf|^2\frac{dy}{dx}dy.
\end{equation}
In the following lemma, we estimate the Jacobian $\frac{dy}{dx}$.

\begin{lemma} \label{lem:jacobian}
    There exist positive constants $\delta_0$ and $C$ such that for any shock strength $\delta \in (0, \delta_0)$, the following estimate holds:
    \begin{equation*}
        \left| \mu \frac{1}{y(1-y)}\frac{dy}{dx} - \frac{(\gamma+1)\rho_+ \delta}{2} \right| \le C\delta^2.
    \end{equation*}
\end{lemma}

\begin{proof}
    Although the Jacobian estimate is well-established in \cite{KOW25}, we present the details on treating the capillarity term for the readers' convenience. From the definition of $y$ in \eqref{def:y_and_f}, we have
    \begin{align*}
        \mu\frac{1}{y(1-y)}\frac{d y}{d x} = \mu\left(\frac{\tilde{u}'}{\tilde{u} - u_-} - \frac{\tilde{u}'}{\tilde{u} - u_+}\right).
    \end{align*}
    On the other hand, integrating \eqref{eq:VCshock}$_1$ over $(-\infty,\xi)$ and $(\xi,\infty)$, we first observe that
    \begin{equation}\label{eq:mass-flux}
        \tilde{\rho}(\tilde{u} - \sigma) = \rho_\pm(u_\pm - \sigma) =: m.
    \end{equation}
    Using \eqref{eq:mass-flux}, we integrate \eqref{eq:VCshock}$_2$ to obtain
    \begin{equation}\label{eq:int-mom}
        \mu \tilde{u}' + \kappa K(\tilde{\rho}) = m(\tilde{u} - u_\pm) + \tilde{p} - p_\pm.
    \end{equation}
    
    Dividing \eqref{eq:int-mom} by $(\tilde{u} - u_\pm)$ and taking the difference between the two far-field cases yields
    \begin{equation}\label{est:diff_far_field}
        \mu \left( \frac{\tilde{u}'}{\tilde{u} - u_-} - \frac{\tilde{u}'}{\tilde{u} - u_+} \right) = \left( \frac{\tilde{p} - p_-}{\tilde{u} - u_-} - \frac{\tilde{p} - p_+}{\tilde{u} - u_+} \right) - \kappa K(\tilde{\rho}) \left( \frac{1}{\tilde{u} - u_-} - \frac{1}{\tilde{u} - u_+} \right).
    \end{equation}
    Thanks to \cite[Lemma 4.6]{KOW25}, the pressure discrepancy for weak shocks provides the leading-order term:
    \begin{equation}\label{est:Jac_main}
        \frac{\tilde{p} - p_-}{\tilde{u} - u_-} - \frac{\tilde{p} - p_+}{\tilde{u} - u_+} = \frac{\gamma + 1}{2}\rho_+ \delta + \mathcal{O}(\delta^2).
    \end{equation}
    Regarding the capillarity contribution, Lemma~\ref{lem:shock-profile} implies that
    \[|K(\tilde{\rho})| \le C \delta |\tilde{\rho}_x|\leq C \delta |\tilde{u}_x|.\] 
    Therefore, the capillarity term is bounded as
    \begin{equation}\label{est:cap_error}
        \kappa K(\tilde{\rho}) \left( \frac{1}{\tilde{u} - u_-} - \frac{1}{\tilde{u} - u_+} \right) = \mathcal{O}(\delta)\mu \left( \frac{\tilde{u}'}{\tilde{u} - u_-} - \frac{\tilde{u}'}{\tilde{u} - u_+} \right).
    \end{equation}
    
    
    Substituting \eqref{est:Jac_main} and \eqref{est:cap_error} into \eqref{est:diff_far_field}, we obtain the desired estimate.
\end{proof}

Therefore, using \eqref{est:Du1} and Lemma \ref{lem:jacobian}, $D_{u_1}$ can be estimated as
\begin{align}
\begin{aligned}\label{est:Du1_2}
	D_{u_1} &\ge \delta\left(\frac{\gamma+1}{2}\rho_+ - C\delta\right) \int_{y_0}^1 y(1-y) |\partial_y f|^2 \,dy\\
	&\ge\delta\left(\frac{\gamma+1}{2}\rho_+ - C\delta\right) \int_{y_0}^1 (y-y_0)(1-y) |\partial_y f|^2 \,dy.
\end{aligned}
\end{align}

\vspace{2mm}
\noindent\textbf{Estimate of $\dot{X}Y$.} Finally, we estimate $\dot{X}Y$. Since $\dot{X}(t)=-(M/\delta)Y_1$, we have
\begin{align*}
\dot{X} Y
&= -\frac{\delta}{M}|\dot{X}|^2+\dot{X} \sum_{i=2}^4 Y_i
\le -\frac{\delta}{2M}|\dot{X}|^2+\frac{C}{\delta} \sum_{i=2}^4 |Y_i|^2\\
&=-\frac{\delta}{4M}|\dot{X}|^2-\frac{M}{4\delta}|Y_1|^2+\frac{C}{\delta}\sum_{i=2}^4|Y_i|^2.
\end{align*}
Using \eqref{eq:VCshock}, we estimate $Y_1$ as
\begin{align*}
	Y_1&=\int_{\R_+}a\frac{p'(\tilde{\rho})}{\sigma-\tilde{u}}\tilde{\rho}_x(u-\tilde{u})\,dx +\int_{\R_+}a\tilde{\rho}(u-\tilde{u})\tilde{u}_x\,dx\\
	&=2\int_{\R_+}a\tilde{\rho}\tilde{u}_x(u-\tilde{u})\,dx+\mu\int_{\R_+}a\frac{\tilde{u}_{xx}}{\sigma-\tilde{u}}(u-\tilde{u})\,dx +\kappa\int_{\R_+}a\frac{K(\tilde{\rho})_x}{\sigma-\tilde{u}}(u-\tilde{u})\,dx.
\end{align*}
Using $|a-1|\le \sqrt{\delta}$ and the smallness of the derivatives $\tilde{u}_{xx}$ and $K(\tilde{\rho})_x$, we get
\begin{equation*}
\left|Y_1 + 2\rho_+\delta\int_{y_0}^1 f \, dy\right| \le C\delta\sqrt{\delta}\int_{y_0}^1 |f| \, dy.
\end{equation*}
Using the algebraic inequality $\frac{a^2}{2} - b^2 \leq (a-b)^2$ for any $a,b\geq 0$, we obtain
\begin{equation*}
-\frac{M}{4\delta}|Y_1|^2 \le -\frac{M\rho_+^2}{2}\delta \left(\int_{y_0}^1 f\,dy\right)^2+C\delta^2\int_{y_0}^1|f|^2\,dy.
\end{equation*}

Next, we use $p(\rho|\tilde{\rho})\le C|\rho-\tilde{\rho}|^2$, Sobolev interpolation inequality and the a priori smallness assumption \eqref{p-assum} to estimate $Y_2$ as
\begin{equation*}
\frac{C}{\delta}|Y_2|^2 \le C\sqrt{\delta} \varepsilon^2 G_1 + C\delta \varepsilon^2\int_{y_0}^1 f^2 \, dy.
\end{equation*}
Similarly, for $Y_3$, one obtains
\begin{equation*}
\frac{C}{\delta}|Y_3|^2 \le C\sqrt{\delta} G_1 + C\delta^2\varepsilon^2\int_{y_0}^1 f^2 \, dy.
\end{equation*}
Finally, using $|\tilde{w}_x| \le C \delta |\tilde{\rho}_x| \le C \delta \sqrt{\delta} a_x$ and the smallness assumption \eqref{p-assum}, $Y_4$ can be estimated as 
\begin{align*}
    \frac{C}{\delta} |Y_4|^2 &\le \frac{C}{\delta} \left(\left| \int_{\R_+} a_x  (w-\tilde{w})^2 \, dx\right|^2  + \left|\int_{\R_+} a |\tilde{w}_x| |w-\tilde{w}|\, dx \right|^2\right) \le C \sqrt{\delta} (\varepsilon^2 + \delta^2) G_3.
\end{align*}
Combining the above estimates, we have
\begin{equation} \label{est-XY}
    \begin{aligned}
       \dot{X}Y&\le -\frac{\delta}{4M}|\dot{X}|^2-\frac{M}{4\delta}|Y_1|^2+\frac{C}{\delta} \sum_{i=2}^4 |Y_i|^2 \\
       &\le -\frac{\delta}{4M}|\dot{X}|^2-\delta\left(\frac{M\rho_+^2}{2}\right)\left(\int_{y_0}^1 f \, dy\right)^2 + C \delta(\varepsilon^2+\delta) \int_{y_0}^1 f^2 \, dy + C \sqrt{\delta} G_1 + C \sqrt{\delta} (\varepsilon^2 + \delta^2) G_3.
    \end{aligned}
\end{equation}

\vspace{2mm}
\noindent \textbf{Conclusion.}
Using \eqref{est-B}, \eqref{est:Du1_2}, \eqref{est-XY} and the smallness of $\delta$ and $\e$, we estimate $\dot{X}Y+B-G$ as
\begin{align*}
    \dot{X}Y + B - G &\le \left( -\frac{\delta}{4M}|\dot{X}|^2 - \frac{1}{2}G_1 - \frac{1}{2}G_3 - \frac{1}{4}D_{u_1} \right) \\
    &\quad + \left( -\frac{M}{4\delta}|Y_1|^2 + \frac{C}{\delta} \sum_{i=2}^4 |Y_i|^2 + B  - G_2 - \frac{1}{2}G_1 - \frac{1}{2}G_3 - \frac{3}{4}D_{u_1} \right)\\
    &\le \left( -\frac{\delta}{4M}|\dot{X}|^2 - \frac{1}{2}G_1 - \frac{1}{2}G_3 - \frac{1}{4}D_{u_1} \right)\\
    &\quad +\delta\Bigg[-\frac{M\rho_+^2}{2} \left(\int_{y_0}^1 f \, dy\right)^2  +\left(\frac{\gamma+1}{2}\rho_++C\e+C\delta^{1/4}\right)\int_{y_0}^1f^2\,dy \\
    &\qquad -\frac{5}{8}\left(\frac{\gamma+1}{2}\rho_+-C\delta\right)\int_{y_0}^1(y-y_0)(1-y)|\pa_yf|^2\,dy \Bigg] +C\delta^{3/4}\|(w-\tilde{w})_x\|_{L^2}^2.
\end{align*}

On the other hand, recall the following Poincar\'e-type inequality \cite{HKKL25-JMAA} for any $f:[y_0,1]\to\R$:
\begin{equation*}
    \int_{y_0}^1 f^2 \, dy - \frac{1}{1-y_0}\left(\int_{y_0}^1 f \, dy \right)^2 \leq \frac{1}{2}\int_{y_0}^1 (y-y_0)(1-y)|\partial_y f|^2 \, dy.
\end{equation*}
This implies, with the choice of $y_0 < \frac{1}{6}$, that
\begin{align*}
	&-\frac{5}{8}\left(\frac{\gamma+1}{2}\rho_+-C\delta\right)\int_{y_0}^1(y-y_0)(1-y)|\pa_yf|^2\,dy\\
	&\le-\frac{5}{4}\left(\frac{\gamma+1}{2}\rho_+-C\delta\right)\int_{y_0}^1f^2\,dy+\frac{3}{2}\left(\frac{\gamma+1}{2}\rho_+-C\delta\right)\left(\int_{y_0}^1f\,dy\right)^2.
\end{align*}

Since we choose $M=\frac{2(\gamma+1)}{\rho_+}$, we get
\begin{align*}
	&\dot{X}Y+B-G\\&\le -\frac{\delta}{4M}|\dot{X}|^2-\frac{1}{2}G_1-\frac{1}{2}G_3-\frac{1}{4}D_{u_1}\\
	&\ -\frac{\gamma+1}{10}\rho_+\int_{y_0}^1f^2\,dy +\delta\left(-\frac{M\rho_+^2}{2}+\frac{3(\gamma+1)}{4}\rho_+\right)\left(\int_{y_0}^1f\,dy\right)^2+C\delta^{3/4}\|(w-\tilde{w})_x\|_{L^2}^2\\
	&\le -\frac{\delta}{4M}|\dot{X}|^2-\frac{1}{2}G_1-\frac{1}{2}G_3-\frac{1}{4}D_{u_1} -\frac{\gamma+1}{10}\rho_+G^S+C\delta^{3/4}\|(w-\tilde{w})_x\|_{L^2}^2,
\end{align*}
where
\[G^S:=\int_{y_0}^1 f^2\,dy=\int_{\R_+}|\tilde{u}_x||u-\tilde{u}|^2\,dx.\]

\end{proof}

\subsection{Estimate on \texorpdfstring{$P$}{P}}

In this subsection, we decompose and estimate the boundary term $P$ defined in Lemma \ref{lem:weighted-rel-entropy}. We split the boundary term into $P = P_1 + P_2$ where,
\begin{align*}
    P_1 &:= \left. \left[ a u \left(\rho  \frac{(u-\tilde{u})^2}{2} + \frac{p(\rho|\tilde{\rho})}{\gamma-1} \right) + a (p-\tilde{p}) (u-\tilde{u}) - \mu a (u-\tilde{u}) (u-\tilde{u})_x \right]\right|_{x=0}, \\
    P_2 &:= \bigg[ a u \frac{\rho (w-\tilde{w})^2}{2} - \sqrt{\kappa} a \rho^{3/2} (u-\tilde{u})(w-\tilde{w})_x + \sqrt{\kappa} a \rho^{3/2} (w-\tilde{w})(u-\tilde{u})_x \\
    &\qquad - \sqrt{\kappa} a(u-\tilde{u})(\rho^{3/2}-\tilde{\rho}^{3/2})\tilde{w}_x + \sqrt{\kappa} a(w-\tilde{w})(\rho^{3/2}-\tilde{\rho}^{3/2})\tilde{u}_x \bigg]\bigg|_{x=0}.
\end{align*}

\begin{lemma}\label{lem:bd}
Under the assumptions of Proposition \ref{prop:apriori}, there exist positive constants $C$ and $c$ such that, for all $t \in [0,T]$,
\begin{align*}
    \int_0^t P(s) \,ds &\leq C e^{-c\delta \beta} + C\varepsilon^2 \int_0^t \left( \|(u-\tilde{u})_{xx}(s,\cdot)\|_{L^2}^2 + \|(w-\tilde{w})_{xx}(s,\cdot)\|_{L^2}^2 \right) \,ds.
\end{align*}
\end{lemma}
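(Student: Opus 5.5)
At $x=0$ the boundary conditions give $u(t,0)=u_-$ and $w(t,0)=0$ (since $\rho_x(t,0)=0$). The plan is to use these to make every term in $P=P_1+P_2$ either small through the shock profile at the boundary, or of favorable sign. The shifted profile is evaluated at $\xi_0(t):=-\sigma t-X(t)-\beta$, and by \eqref{small-X} we have $\xi_0(t)\le -\frac{\sigma}{2}t-\beta$. Lemma~\ref{lem:shock-profile} therefore yields
\[
|(\tilde{u}-u_-)(t,0)|+|(\tilde{\rho}-\rho_-)(t,0)|+|(\tilde{\rho}_x,\tilde{u}_x,\tilde{w},\tilde{w}_x)(t,0)|\le C\delta e^{-c\delta(\sigma t/2+\beta)}.
\]
Consequently $(u-\tilde{u})(t,0)=u_--\tilde{u}(\xi_0)$ and $(w-\tilde{w})(t,0)=-\tilde{w}(\xi_0)$ are both bounded by $C\delta e^{-c\delta(\sigma t+\beta)}$, and their time integrals are at most $C\delta^{-1}\cdot\delta e^{-c\delta\beta}\le Ce^{-c\delta\beta}$.

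Next I would handle the quantities in $P$ that are not small on their own. The only perturbation not forced small at $x=0$ is $\rho-\tilde{\rho}$. In $P_1$ the term $a u\,p(\rho|\tilde{\rho})/(\gamma-1)$ has $u=u_-<0$ and $p(\rho|\tilde{\rho})\ge0$, so it is nonpositive and can be dropped; this is exactly where the outflow sign enters. The remaining terms in $P_1$ and $P_2$ each carry at least one boundary factor $(u-\tilde{u})(t,0)$, $(w-\tilde{w})(t,0)$, $\tilde{w}_x(t,0)$ or $\tilde{u}_x(t,0)$, multiplied by a quantity such as $|p-\tilde{p}|$, $|\rho^{3/2}-\tilde{\rho}^{3/2}|$, $|(u-\tilde{u})_x|$ or $|(w-\tilde{w})_x|$ evaluated at $x=0$. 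The zeroth-order factors are bounded by $C\varepsilon$ using $\|\cdot\|_{L^\infty}\le C\|\cdot\|_{H^1}$ and \eqref{p-assum}, so those products integrate to $Ce^{-c\delta\beta}$. The term $a u\rho(w-\tilde{w})^2/2$ is likewise nonpositive, or else exponentially small.

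The main obstacle is the derivative traces $(u-\tilde{u})_x(t,0)$ and $(w-\tilde{w})_x(t,0)$: the a priori assumption only gives $u-\tilde{u}\in H^1$, so these traces are not controlled pointwise in time. For them I would use Young's inequality, $|E(t)\,g_x(t,0)|\le \varepsilon^{-2}|E(t)|^2+\varepsilon^2|g_x(t,0)|^2$, where $E(t)\le C\delta e^{-c\delta(\sigma t+\beta)}$ is the exponentially small boundary factor. The trace inequality gives $|g_x(0)|^2\le C\|g_x\|_{L^2}\|g_{xx}\|_{L^2}$. The term $\varepsilon^2\|g_x\|_{L^2}\|g_{xx}\|_{L^2}$ is then bounded by $\varepsilon^2\|g_{xx}\|_{L^2}^2$ plus $\varepsilon^2\|g_x\|^2_{L^2}$. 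For $g=u-\tilde{u}$ the latter piece is absorbed in $D_{u_1}$; for $w-\tilde{w}$ it is absorbed later by $D_{w_1}$. Alternatively, I would use $\|g_x\|_{L^2}\le C\varepsilon$ directly to bound this piece by $C\varepsilon^2\|g_{xx}\|_{L^2}^2+C\varepsilon^4$. The term with $\varepsilon^{-2}$ is harmless only if the factor $E$ is chosen with an extra $\varepsilon^2$. To arrange this, I would pair $E$ with the amplitude factor first, writing $E\,|g_x(0)|\le E\,(\|g_x\|^{1/2}\|g_{xx}\|^{1/2})$ and $E\le 1$. Then $E\varepsilon^{1/2}\|g_{xx}\|^{1/2}\le CE^{4/3}+\varepsilon^2\|g_{xx}\|^2$, with $\|g_x\|\le\varepsilon$, and $E^{4/3}$ is still time-integrable with bound $Ce^{-c\delta\beta}$. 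This yields the stated bound after integrating in time.
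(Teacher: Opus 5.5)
Your final argument is correct and is essentially the paper's proof. The outflow sign $u(t,0)=u_-<0$ disposes of the $au(\cdot)$ boundary terms, and every other term carries an exponentially small boundary factor $E(s)$ with $\int_0^t E\,ds\le C\delta^{-1}e^{-c\delta\beta}$; the derivative traces are then handled, exactly as in the paper, via $E\,\|g_x\|_{L^2}^{1/2}\|g_{xx}\|_{L^2}^{1/2}\le CE^{4/3}+\|g_x\|_{L^2}^2\|g_{xx}\|_{L^2}^2\le CE^{4/3}+C\varepsilon^2\|g_{xx}\|_{L^2}^2$. You were right to abandon the preliminary routes: the $\varepsilon^{-2}E^2$ term is not uniform in $\varepsilon$, and a per-time bound of size $C\varepsilon^4$ is not integrable over $[0,t]$ uniformly in $t$, so only the $E^{4/3}$ version should be kept.
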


\begin{proof}
First of all, it follows from \eqref{small-X} that for all $0\le s \leq T$, 
\[-\sigma s - X(s) - \beta \leq -\frac{\sigma}{2}s - \beta < 0,\]  
which implies
\begin{equation}\label{eq:exp_decay}
    \int_0^t e^{-c\delta|-\sigma s - X(s) - \beta|} \, ds \le C\delta^{-1}e^{-c\delta\beta}.
\end{equation}


\noindent \textbf{Estimates for $P_1$.} 
First of all, we observe that the first term is always non-positive, due to the outflow boundary condition $u(s,0)=u_-<0$:
\[\left[au\left(\rho\frac{(u-\tilde{u})^2}{2}+\frac{p(\rho|\tilde{\rho})}{\gamma-1}\right)\right]\bigg|_{x=0}\le 0.\]
Next, we use Lemma \ref{lem:shock-profile} to get
\[
    |(u-\tilde{u})(s,0)| = |u_- - \tilde{u}(-\sigma s -X(s)-\beta)| \le C \delta e^{-c\delta|-\sigma s - X(s) - \beta|}.
\]
Using the smallness assumption \eqref{p-assum} and \eqref{eq:exp_decay}, the second term is bounded by
\begin{align*}
    \int_0^t \bigl| \bigl[ a (p-\tilde{p}) (u-\tilde{u}) \bigr](s,0) \bigr| \, ds &\leq C\|p-\tilde{p}\|_{L^\infty(\mathbb{R}_+)}\int_0^t |(u-\tilde{u})(s,0)| \, ds\\
    &\leq C\delta \varepsilon \int_0^t e^{-c\delta|-\sigma s - X(s) - \beta|} \, ds \leq C\varepsilon e^{-c\delta\beta}.
\end{align*}
For the last term, we apply Sobolev interpolation inequality, Young's inequality, and \eqref{p-assum} to get
\begin{align*}
    &\left|\int_0^t \bigl[ a (u-\tilde{u}) (u-\tilde{u})_x \bigr] (s,0) \, ds\right| \\
    &\quad \leq C \int_0^t |(u-\tilde{u})(s,0)| \|(u-\tilde{u})_x\|_{L^2}^{1/2} \|(u-\tilde{u})_{xx}\|_{L^2}^{1/2} \, ds\\
    &\quad \leq C\int_0^t |(u-\tilde{u})(s,0)|^{4/3} \, ds + C\int_0^t \|(u-\tilde{u})_x\|_{L^2}^{2} \|(u-\tilde{u})_{xx}\|_{L^2}^{2} \, ds\\
    &\quad \leq C\delta^{1/3}e^{-c\delta\beta} + C\varepsilon^2\int_0^t \|(u-\tilde{u})_{xx}(s,\cdot)\|_{L^2}^2 \, ds.
\end{align*}

\noindent \textbf{Estimates for $P_2$.} 
Similar to $P_1$, the first term is always non-positive thanks to the outflow boundary condition $u(s,0)=u_-<0$. As in the estimate of $P_1$, the second term of $P_2$ is bounded by:
\begin{align*}
    &\left| \int_0^t \left[ a \rho^{3/2} (u-\tilde{u})(w-\tilde{w})_x \right](s,0) \, ds \right| \\
    &\quad \leq C \int_0^t |(u-\tilde{u})(s,0)| \|(w-\tilde{w})_x\|_{L^2}^{1/2} \|(w-\tilde{w})_{xx}\|_{L^2}^{1/2} \, ds\\
    &\quad \leq C\int_0^t |(u-\tilde{u})(s,0)|^{4/3} \, ds + C\int_0^t \|(w-\tilde{w})_x\|_{L^2}^{2} \|(w-\tilde{w})_{xx}\|_{L^2}^{2} \, ds\\
    &\quad \leq C\delta^{1/3}e^{-c\delta\beta} + C\varepsilon^2\int_0^t \|(w-\tilde{w})_{xx}(s,\cdot)\|_{L^2}^{2} \, ds.
\end{align*}
Due to the boundary condition $w(s,0)=0$, we get
\[
|(w-\tilde{w})(s,0)| = |\tilde{w}(-\sigma s -X(s)-\beta)| \le C | \tilde{\rho}_x(s,0) | \le C \delta^2 e^{-c\delta|-\sigma s - X(s) - \beta|}.
\] 
Thus, by using the same argument above, the third term of $P_2$ is bounded as 
\begin{align*}
    &\left| \int_0^t \left[ a \rho^{3/2} (w-\tilde{w})(u-\tilde{u})_x \right](s,0) \, ds \right| \\
    &\quad \le C\int_0^t |(w-\tilde{w})(s,0)|^{4/3} \, ds + C\varepsilon^2\int_0^t \|(u-\tilde{u})_{xx}\|_{L^2}^{2} \, ds \\
    &\quad \leq C\delta^{5/3}e^{-c\delta\beta} + C\varepsilon^2\int_0^t \|(u-\tilde{u})_{xx}(s,\cdot)\|_{L^2}^{2} \, ds. 
\end{align*}
Finally, using the bound $|\tilde{w}_x| \le C \delta |\tilde{\rho}_x|$, \eqref{p-assum}, and \eqref{eq:exp_decay}, the remaining terms of $P_2$ are estimated as
\begin{align*}
    \int_0^t \left| \left[ a(w-\tilde{w})(\rho^{3/2}-\tilde{\rho}^{3/2})\tilde{u}_x \right](s,0) \right| \, ds &\leq C \varepsilon \int_0^t \delta^{4}e^{-c\delta|-\sigma s - X(s) - \beta|} \, ds \leq C \varepsilon \delta^3 e^{-c\delta\beta}, \\
    \int_0^t \left| \left[ a(u-\tilde{u})(\rho^{3/2} - \tilde{\rho}^{3/2})\tilde{w}_x \right](s,0) \right| \, ds &\leq C\varepsilon \int_0^t \delta^4 e^{-c\delta|-\sigma s - X(s) - \beta|} \, ds \leq C \varepsilon \delta^3 e^{-c\delta\beta}.
\end{align*}

Collecting all the estimates above, we obtain the desired bound.
\end{proof}

\subsection{Proof of Lemma \ref{lem:L2}}

From Lemma \ref{lem:XBG} and Lemma \ref{lem:lead}, we have 
\begin{align*}
    \frac{d}{dt} \int_{\mathbb{R}_+} a \eta(U|\widetilde{U})\,dx
    &\le -\frac{\delta}{4M}|\dot{X}|^2   -\frac{1}{2}G_1- \frac{1}{2}G_3   -\frac{\gamma+1}{10}\rho_+G^S \\
    &\quad - \frac{1}{4}D_{u_1} +C\delta^{3/4}\|(w-\tilde{w})_x\|_{L^2}^2 + P.
\end{align*}
Integrating the above inequality over $[0,t]$, using $1\le a\le \frac{3}{2}$, and applying Lemma \ref{lem:bd}, we obtain
\begin{align*}
    &\int_{\mathbb{R}_+} \eta(U(t,x)|\widetilde{U}(t,x))\,dx + \int_0^t \big(\delta |\dot{X}|^2 + G_1 + G_3 + G^S + D_{u_1}  \big)\,ds\\
    &\quad \leq C \int_{\mathbb{R}_+}  \eta(U(0,x)|\widetilde{U}(0,x))\,dx  + Ce^{-c\delta \beta} + C\varepsilon^2 \int_0^t  \|(u-\tilde{u})_{xx}(s,\cdot)\|_{L^2}^2 \, ds\\
    &\qquad + C(\delta^{3/4}+\varepsilon^2) \int_0^t  \|(w-\tilde{w})_{xx}(s,\cdot)\|_{L^2}^2 \, ds.
\end{align*}
Since the relative entropy is locally quadratic, the relative entropy is equivalent to the $L^2$-norm due to the smallness assumption \eqref{p-assum}:
\[
     \int_{\mathbb{R}_+} \eta \big(U(t,x)|\widetilde{U}(t,x) \big)\,dx \sim \|(\rho - \tilde{\rho}, u - \tilde{u}, w-\tilde{w}) (t,\cdot) \|_{L^2}^2,
\]
which yields the desired $L^2$ estimate \eqref{est-L2} in Lemma \ref{lem:L2}.

\section{Higher order estimates} \label{sec:higher_order}
To complete the proof of Proposition~\ref{prop:apriori}, we need to attain the high-order estimates. For simplicity, we define the perturbations of the density, velocity and capillarity variables as
$$\phi:=\rho-\tilde{\rho},\qquad\psi:=u-\tilde{u},\qquad\omega=w-\tilde{w}.$$
Then $(\phi, \psi, \omega)$ satisfies the perturbed equations:
\begin{equation}\label{eq:perturbed-eq}
\left\{
\begin{aligned}
&\phi_t +u \phi_x +\rho \psi_x = F+\dot{X}(t) \tilde{\rho}_x,\\
&\rho\bigl(\psi_t+u \psi_x\bigr)+\frac{p'(\rho)\rho^{1/2}}{\sqrt{\kappa}}\omega =\mu \psi_{xx}+\sqrt{\kappa} (\rho^{3/2}\omega_x)_x +G+\dot{X}(t) \rho \tilde{u}_x,\\
&\rho(\omega_t +u \omega_x) = -\sqrt{\kappa} (\rho^{3/2}\psi_x)_x+ H+ \dot{X}(t) \rho \tilde{w}_x,
\end{aligned}
\right.
\end{equation}
where the error terms $F, G, H$ are given by
\begin{equation*}
\begin{aligned}
F&:= -\tilde{\rho}_x \psi-\tilde{u}_x \phi,\\
G&:= \tilde{\rho}_x \left[ \frac{p'(\tilde{\rho})}{\tilde{\rho}} \phi -\bigl(p'(\rho)-p'(\tilde{\rho})\bigr)  - \frac{p'(\rho)}{\tilde{\rho}^{1/2}} \bigl( \rho^{1/2} -\tilde{\rho}^{1/2} \bigr) \right]-\rho \tilde{u}_x \psi -\mu \tilde{u}_{xx} \frac{\phi}{\tilde{\rho}} \\
&\quad + \sqrt{\kappa}\left[ (\rho^{3/2}-\tilde{\rho}^{3/2})\tilde{w}_x \right]_x - \sqrt{\kappa}\frac{\phi}{\tilde{\rho}}(\tilde{\rho}^{3/2} \tilde{w}_x)_x, \\
H&:= -\rho \psi \tilde{w}_x - \sqrt{\kappa}\left[ (\rho^{3/2}-\tilde{\rho}^{3/2})\tilde{u}_x \right]_x + \sqrt{\kappa}\frac{\phi}{\tilde{\rho}}(\tilde{\rho}^{3/2} \tilde{u}_x)_x.
\end{aligned}
\end{equation*}

Then, the goal of this section is to prove the following $H^1$-estimate.

\begin{lemma}\label{lem:H1}
	Under the assumptions of Proposition \ref{prop:apriori}, there exist positive constants $C$ and $c$ such that, for all $t \in [0,T]$,
	\begin{align}
		\begin{aligned}\label{est:H1}
		&\|\phi(t,\cdot)\|_{L^2}^2+\|(\psi,\omega)(t,\cdot)\|_{H^1}^2 \\
		&\quad +\int_0^t \bigl(\delta|\dot{X}|^2+G_1+G_3+G^S+D_{u_1}+G_{w_1}+D_{w_1}+D_{u_2}+G_{w_2}+D_{w_2}\bigr)\,ds\\
		&\quad \le C\left(\|\phi_0\|_{L^2}^2+\|(\psi_0,\omega_0)\|_{H^1}^2\right)+C e^{-c \delta\beta}.
		\end{aligned}
	\end{align}
\end{lemma}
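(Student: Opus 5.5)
We plan to combine Lemma~\ref{lem:L2} with two additional energy estimates for the perturbed system \eqref{eq:perturbed-eq}. The $L^2$ estimate leaves three terms uncontrolled: the $L^2$-dissipation of $\omega$ (i.e.\ $G_{w_1}$, needed because Lemma~\ref{lem:lead} produces $C\delta^{3/4}\|\omega_x\|_{L^2}^2$), and the second-order terms $\|\psi_{xx}\|_{L^2}^2$ and $\|\omega_{xx}\|_{L^2}^2$ on the right of \eqref{est-L2}. So the plan has two steps. First, an $H^1$ estimate for $(\psi,\omega)$ producing $D_{u_2}$ and $D_{w_1}$. Second, a cross estimate producing the ``good'' terms $G_{w_1}$, $G_{w_2}$ and $D_{w_2}$. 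Each step is closed using the smallness of $\delta$ and $\varepsilon$, and the exponentially small boundary contributions are handled as in Lemma~\ref{lem:bd}.

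\textbf{Step 1 ($H^1$ estimate).} We multiply \eqref{eq:perturbed-eq}$_2$ by $-\psi_{xx}/\rho$ and \eqref{eq:perturbed-eq}$_3$ by $-\omega_{xx}/\rho$, add the results, and integrate over $\R_+$.
\begin{itemize}
\item The main coupling terms $-\sqrt{\kappa}(\rho^{3/2}\omega_x)_x\psi_{xx}/\rho$ and $+\sqrt{\kappa}(\rho^{3/2}\psi_x)_x\omega_{xx}/\rho$ cancel at leading order, since $\rho^{1/2}\omega_{xx}\psi_{xx}$ appears with opposite signs. This is the antisymmetry of $M(U)$, and it leaves only lower-order terms $\rho^{-1/2}\rho_x(\ldots)$, which are small by \eqref{p-assum} and the decay in Lemma~\ref{lem:shock-profile}.
\item The viscous term gives $D_{u_2}$.
\item The pressure coupling $\frac{p'(\rho)\rho^{1/2}}{\sqrt\kappa\,\rho}\omega\,\psi_{xx}$ is not small. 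We integrate it by parts onto $\omega_x\psi_x$ and bound it by $\eta D_{w_1}+C_\eta D_{u_1}$. Here $D_{w_1}$ must come from Step 2.
\item The boundary terms from $\psi_t\psi_{xx}$ and $\omega_t\omega_{xx}$ vanish up to $O(\delta e^{-c\delta|\sigma s+X+\beta|})$. The reason is that $\psi(t,0)=u_--\tilde u$ and $\omega(t,0)=-\tilde w$ are explicit, so $\psi_t(t,0)$ and $\omega_t(t,0)$ are exponentially small. The convective boundary terms $u\psi_x^2$ and $u\omega_x^2$ at $x=0$ have a good sign because $u_-<0$.
\item The $\dot X$ terms are bounded by $\delta|\dot X|^2$ plus small multiples of the dissipations.
\end{itemize}
For $\phi$ we need no separate estimate at this level beyond $L^2$, since $\phi_x=\sqrt{\rho/\kappa}\,\omega+\ldots$ and $\phi_{xx}$ is recovered from $\omega_x$. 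This gives $\|\phi\|_{H^2}\sim\|\phi\|_{L^2}+\|\omega\|_{H^1}$.

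\textbf{Step 2 (cross estimate: the main obstacle).} We must obtain $\int G_{w_1}$, $\int D_{w_1}$, $\int G_{w_2}$ and $\int D_{w_2}$ without any boundary condition beyond $\omega(t,0)=-\tilde w$. To do this we multiply \eqref{eq:perturbed-eq}$_2$ by $\omega$ (and its $x$-derivative version by $-\omega_{xx}$ or $\omega_x$), and we write $\rho\psi_t\omega=(\rho\psi\omega)_t-\psi(\rho\omega)_t$. The last term is expressed through \eqref{eq:perturbed-eq}$_3$, which gives $\sqrt\kappa\,\rho^{3/2}\psi_x^2$ after integration by parts; this is controlled by $D_{u_1}$, respectively $D_{u_2}$.
\begin{itemize}
\item The term $\frac{p'(\rho)\rho^{1/2}}{\sqrt\kappa}\omega^2/\rho$ yields $G_{w_1}$.
\item The term $-\sqrt\kappa(\rho^{3/2}\omega_x)_x\omega$ yields $D_{w_1}$, with a boundary term $\rho^{3/2}\omega_x\omega|_{x=0}$. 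Since $\omega(t,0)$ is exponentially small, this boundary term is handled with the trace interpolation of Lemma~\ref{lem:bd}.
\item The viscous term $\mu\psi_{xx}\omega$ is bounded by $\eta G_{w_1}+C_\eta\|\psi_{xx}\|^2$.
\end{itemize}
The time-derivative quantities $\int\rho\psi\omega$ and $\int\rho\psi_x\omega_x$ are bounded by the energy. We therefore add Step 2 multiplied by a small constant $\lambda$ to Lemma~\ref{lem:L2} plus Step 1, so that these quantities are absorbed and the $D_{u_1}$, $D_{u_2}$ terms produced by Step 2 are absorbed as well. The derivative version is where we expect the most trouble, because of the boundary terms involving $\psi_x(t,0)$ and $\omega_{xx}(t,0)$, which carry no exponential smallness. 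We would try to avoid them by choosing multipliers that only generate traces of $\omega$ or $\psi$ themselves. If that fails, we would use the equation at $x=0$ to express these traces through exponentially small data.

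\textbf{Closing.} With $\lambda$ small, $\delta_0$ small (so that $C\delta^{3/4}\|\omega_{xx}\|^2\le\frac12 D_{w_2}$ and $C\delta^{3/4}\|\omega_x\|^2$ is absorbed into $G_{w_2}$), and $\varepsilon_0$ small, all right-hand error terms are absorbed. The boundary contributions sum to $Ce^{-c\delta\beta}$ by \eqref{eq:exp_decay}, which yields \eqref{est:H1}.
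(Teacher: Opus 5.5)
\textbf{There is a genuine gap in your closing step, and a sign error in Step~1 hides it.}

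With the multiplier $-\omega_{xx}/\rho$, the convective term contributes to the left-hand side
$-\int_{\R_+}u\,\omega_x\omega_{xx}\,dx=\frac12u_-\,\omega_x(t,0)^2+\frac12\int_{\R_+}u_x\omega_x^2\,dx$.
Since $u_-<0$, the boundary piece is $\frac12u_-\omega_x(t,0)^2\le0$, which is the bad sign. The same happens for $\psi$.

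The outflow sign is favorable for zeroth-order multipliers, but it flips here. The Dirichlet data make $\omega_t(t,0)$ small at $x=0$, not $\omega_t+u\omega_x$. You can integrate by parts and use $\omega_x(t,0)^2\le2\|\omega_x\|_{L^2}\|\omega_{xx}\|_{L^2}$, or apply Young's inequality directly as the paper does. Either way, $\int u\,\omega_x\omega_{xx}$ costs $\tau D_{w_2}+C_\tau D_{w_1}$ with $C_\tau\sim\tau^{-1}$. So Step~1 necessarily needs a large multiple of $\|\omega_x\|_{L^2}^2$, not just $\eta D_{w_1}$.

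\textbf{Consequence for the weights.} A single small weight $\lambda$ on both cross estimates cannot close the argument, for two competing reasons.
\begin{enumerate}
\item The $L^2$-level cross estimate must supply $C_\tau D_{w_1}$ to Step~1. So its weight must be much larger than $C_\tau$ times the weight of Step~1.
\item The $H^1$-level cross estimate costs $C(D_{u_1}+D_{u_2})$ through $\sqrt\kappa\int\rho^{1/2}|\psi_{xx}|^2$. Step~1 is the only source of $D_{u_2}$, so this cross estimate needs a weight much smaller than that of Step~1, and then $\tau$ must be small relative to that ratio.
\end{enumerate}
You therefore need the ordering $1\gg\lambda_1\gg\lambda_2\gg\lambda_3$ for, respectively, Lemma~\ref{lem:L2}, the $L^2$ cross estimate, Step~1, and the $H^1$ cross estimate.

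\textbf{A second required change.} In the $L^2$ cross estimate you may not bound the viscous term by $\eta G_{w_1}+C_\eta\|\psi_{xx}\|_{L^2}^2$. An $O(1)$ multiple of $D_{u_2}$ there would have to be paid for by Step~1, which carries the smaller weight; the two constraints $\lambda_1C_\eta\ll\lambda_2$ and $\lambda_2C_\tau\ll\lambda_1$ are incompatible in general. Instead, integrate by parts:
$\int\frac{\mu}{\rho}\psi_{xx}\omega\,dx=-\frac{\mu}{\rho}\psi_x\omega\big|_{x=0}-\mu\int\psi_x(\omega/\rho)_x\,dx$.
The boundary term is harmless because $\omega(t,0)=-\tilde w$ is exponentially small. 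The integral is bounded by $\frac1{16}D_{w_1}+CD_{u_1}$.

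\textbf{This is exactly the paper's structure.}
\begin{enumerate}
\item Lemma~\ref{lem:higeroder-1} is added to Lemma~\ref{lem:L2} with weight $1/(C_1+1)$.
\item The $H^1$ cross estimate multiplies $\eqref{eq:perturbed-eq}_2$ by $-\omega_{xx}/\rho$ and $\eqref{eq:perturbed-eq}_3$ by $-\psi_{xx}/\rho$. Its boundary terms always carry a factor $\psi_t$, $\omega_t$ or $\omega$ at $x=0$, so the uncontrolled traces you worried about never appear. It is added to Lemma~\ref{lem:higher2} with weight $1/(C_2+1)$ and $\tau=1/(4(C_2+1))$.
\item The result still has $C_3(G_{w_1}+D_{w_1}+D_{u_1}+\cdots)$ on the right. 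It is added to the first combination with weight $1/(C_3+1)$.
\end{enumerate}
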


\begin{remark}
	It follows from the definition of $\omega$ that
	\[\omega=\sqrt{\frac{\kappa}{\rho}}\phi_x +\sqrt{\kappa}\tilde{\rho}_x\left(\frac{1}{\sqrt{\rho}}-\frac{1}{\sqrt{\tilde{\rho}}}\right).\]
	Thus, using the smallness assumption \eqref{p-assum}, we get
	\[\|\phi_x\|_{L^2}\le C(\|\omega\|_{L^2}+\|\phi\|_{L^2}),\quad \|\phi_{xx}\|_{L^2}\le C(\|\omega\|_{H^1}+\|\phi\|_{L^2}),\quad \|\omega\|_{H^1}\le C\|\phi\|_{H^2}, \]
	which implies that $\|\phi\|_{L^2}^2+\|\omega\|_{H^1}^2$ is equivalent to $\|\phi\|_{H^2}^2$. Therefore, \eqref{est:H1} is equivalent to the desired estimate in Proposition \ref{prop:apriori}.
\end{remark}

For the subsequent estimate, it is necessary to establish bounds for the error terms $F$, $G$, and $H$. The estimate for $F$ readily follows from 
$|\tilde{\rho}_x| \le C|\tilde{u}_x|$ that
\begin{equation*} 
|F| \le C|\tilde{u}_x| |(\phi, \psi)|.
\end{equation*}
Similarly, we obtain the following bounds for $G$ and $H$:
\begin{equation} \label{eq:GH-est}
|(G, H)|\le C|\tilde{u}_x||(\phi,\psi,\phi_x)| \le C|\tilde{u}_x||(\phi,\psi,\omega)|.
\end{equation}
Furthermore, we observe the following integral bound:
\begin{equation} \label{est-quadratic}
\begin{aligned}
\int_{\mathbb{R}_+} |\tilde{u}_x| |(\phi,\psi,\omega)|^2 \, dx 
&\le \int_{\mathbb{R}_+} \left[ \left( \phi-\frac{\tilde{\rho}}{\sigma-\tilde{u}}\psi\right)^2 + C |\psi|^2 + C|\omega|^2 \right] |\tilde{u}_x| \, dx \\
&\le C \sqrt{\delta} (G_1 + G_3) + G^S.
\end{aligned}
\end{equation}

The following lemma provides dissipation for $\omega$ in the $L^2$-estimate of $(\phi,\psi,\omega)$.


\begin{lemma} \label{lem:higeroder-1}
	Under the assumptions of Proposition \ref{prop:apriori}, there exist positive constants $C$ and $c$ such that, for all $t \in [0,T]$,
\begin{equation} \label{est-L2-combined}
\begin{aligned}
    &\|(\phi, \psi, \omega) (t,\cdot)\|_{L^2}^2 + \int_0^t \big(\delta |\dot{X}|^2 + G_1 + G_3 + G^S + D_{u_1} + G_{w_1} + D_{w_1} \big)\,ds\\
    &\quad \leq C \|(\phi_0, \psi_0, \omega_0) \|_{L^2}^2 + Ce^{-c\delta \beta}+ C\varepsilon^2 \int_0^t  D_{u_2} \, ds + C(\delta^{3/4}+\varepsilon^2) \int_0^t  D_{w_2} \, ds.
\end{aligned}
\end{equation}
\end{lemma}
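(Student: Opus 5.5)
Starting from Lemma \ref{lem:L2}, what is missing is the dissipation $G_{w_1}+D_{w_1}$ for $\omega$, together with control of the term $C\delta^{3/4}\int_0^t\|\omega_x\|_{L^2}^2\,ds$. That term was hidden in Lemma \ref{lem:lead} and appears on the right-hand side as $(\delta^{3/4}+\varepsilon^2)D_{w_2}$ only after an interpolation. Note that $\|\omega_x\|^2_{L^2}\le C D_{w_1}$. So if I can produce $\int_0^t(G_{w_1}+D_{w_1})\,ds$ with a constant independent of $\delta$, then adding a small multiple of it to \eqref{est-L2} absorbs the $C\delta^{3/4}\|\omega_x\|^2$ term for $\delta$ small. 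I would therefore redo the $L^2$ bookkeeping so that the $\|\omega_x\|^2$ error is kept explicit, rather than pushed to $D_{w_2}$.

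To generate $G_{w_1}$ and $D_{w_1}$, I would use the cross term between $\psi$ and $\omega$ in \eqref{eq:perturbed-eq}. Multiply \eqref{eq:perturbed-eq}$_2$ by $-\omega/\rho$ and integrate over $\R_+$, then multiply \eqref{eq:perturbed-eq}$_3$ by $\psi/\rho$ and integrate, and add the two. The relevant pieces behave as follows.
\begin{itemize}
\item The pressure coupling $\frac{p'(\rho)\rho^{1/2}}{\sqrt\kappa}\omega\cdot\frac{\omega}{\rho}$ produces exactly $-G_{w_1}$.
\item The term $-\frac{1}{\rho}\sqrt\kappa(\rho^{3/2}\omega_x)_x\,\omega$, after integration by parts, gives $-\sqrt\kappa\int\rho^{1/2}|\omega_x|^2 = -D_{w_1}$ plus a lower-order term with $\rho_x$. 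The boundary term $\sqrt\kappa\rho^{1/2}\omega\omega_x|_{x=0}$ is small, because $|\omega(s,0)|=|\tilde w|\le C\delta^2e^{-c\delta|\sigma s+X+\beta|}$.
\item The time-derivative parts combine into $-\frac{d}{dt}\int\psi\omega\,dx$ plus transport remainders.
\item The second-order terms $\mu\psi_{xx}\omega/\rho$ and $\sqrt\kappa(\rho^{3/2}\psi_x)_x\psi/\rho$ are bounded by $\frac14 D_{w_1}+C D_{u_1}$. Here one integrates by parts, and the boundary values of $\psi$ are again exponentially small, with the boundary terms handled exactly as in Lemma \ref{lem:bd} through the trace interpolation $|\psi_x(s,0)|\le C\|\psi_x\|^{1/2}\|\psi_{xx}\|^{1/2}$. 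This is where the $\varepsilon^2\int D_{u_2}$ and $\varepsilon^2\int D_{w_2}$ terms enter.
\item The convection and $\rho_x$ remainders are $O(\varepsilon)$ times the dissipations.
\item $F,G,H$ are controlled by \eqref{eq:GH-est} and \eqref{est-quadratic}.
\item The $\dot X$ terms are bounded by $\delta|\dot X|^2$ plus small multiples of the good terms, using $|\tilde w_x|,|\tilde u_x|\le C\delta^2$.
\end{itemize}
Altogether this yields
\[
\int_{\R_+}\psi\omega\,dx\Big|_t+\int_0^t(G_{w_1}+D_{w_1})\,ds\le \Big|\int_{\R_+}\psi\omega\,dx\Big|_0+C\int_0^t\big(D_{u_1}+G^S+\sqrt\delta(G_1+G_3)+\delta|\dot X|^2\big)ds+Ce^{-c\delta\beta}+C\varepsilon^2\int_0^t(D_{u_2}+D_{w_2})ds .
\]

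Finally, I would add a small multiple $\nu$ of this inequality to \eqref{est-L2}. The cross term $\nu|\int\psi\omega\,dx|$ is dominated by $\nu\|(\psi,\omega)\|^2_{L^2}$ and so is absorbed. The term $\nu C\int_0^t(D_{u_1}+G^S+\dots)\,ds$ is absorbed by the left-hand side of \eqref{est-L2} once $\nu$ is chosen small. The term $C\delta^{3/4}\int\|\omega_x\|^2$ is absorbed by $\nu D_{w_1}$ once $\delta\ll\nu$. The remaining $\varepsilon^2$ and $\delta^{3/4}$ terms stay on the right-hand side, which gives \eqref{est-L2-combined}.

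The main obstacle is closing the boundary terms from the integrations by parts without a boundary condition on $\psi_x$ or $\omega_x$. The only tools there are the exponential smallness of $\psi(s,0)$ and $\omega(s,0)$ and the trace interpolation. A secondary difficulty is making sure the $\psi_x$ contributions can be bounded by $D_{u_1}$ with a $\nu$-independent constant, so that the absorption works.
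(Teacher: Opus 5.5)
Your argument is the paper's proof. The paper also builds the cross functional $\int_{\R_+}\psi\omega\,dx$ from \eqref{eq:perturbed-eq}$_2$ and \eqref{eq:perturbed-eq}$_3$. It controls the boundary terms by the exponential smallness of $\psi(s,0)$ and $\omega(s,0)$ together with trace interpolation, which is where $\varepsilon^2\int_0^t(D_{u_2}+D_{w_2})\,ds$ enters. It bounds $\sqrt{\kappa}\int_{\R_+}\rho^{1/2}|\psi_x|^2\,dx$ by $CD_{u_1}$, and adds $\frac{1}{C_1+1}$ times the result to \eqref{est-L2}.

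One sign slip must be fixed: the two multipliers need the same sign, and the paper uses $\omega/\rho$ and $\psi/\rho$. With $-\omega/\rho$ and $+\psi/\rho$, as you wrote, the time derivatives produce $\psi\omega_t-\omega\psi_t$, which is not $\pm\partial_t(\psi\omega)$. With $-\omega/\rho$ and $-\psi/\rho$ you do get $-\frac{d}{dt}\int_{\R_+}\psi\omega\,dx$ together with $-G_{w_1}-D_{w_1}$, as you claim.

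Where you differ from the paper is the bookkeeping of the term $C\delta^{3/4}\|\omega_x\|_{L^2}^2$ from Lemma \ref{lem:lead}, and your version is the sound one. The paper's proof of Lemma \ref{lem:L2} turns this into $C\delta^{3/4}\int_0^t\|(w-\tilde w)_{xx}\|_{L^2}^2\,ds$. However, no inequality on $\R_+$ bounds $\|\omega_x\|_{L^2}$ by $\|\omega_{xx}\|_{L^2}$. Nor does Lemma \ref{lem:L2} provide any time-integrated global control of $\|\omega\|_{L^2}^2$ to interpolate with, since $G_3$ is localized by $a_x$. Keeping the term explicit and absorbing it into $\nu D_{w_1}$ closes the argument; this works because the constant $C_1$ in the cross estimate is independent of $\delta$. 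It even gives the lemma with only $C\varepsilon^2\int_0^t(D_{u_2}+D_{w_2})\,ds$ on the right-hand side.
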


\begin{proof}
Multiplying $\eqref{eq:perturbed-eq}_2$ by $\frac{\omega}{\rho}$ and $\eqref{eq:perturbed-eq}_3$ by $\frac{\psi}{\rho}$, and summing the resulting equations, we obtain 
\begin{align*}
(\psi \omega)_t + u (\psi \omega)_x + \frac{1}{\sqrt{\kappa}}\frac{p'(\rho)}{\rho^{1/2}} |\omega|^2
&= \frac{\mu}{\rho} \psi_{xx} \omega + \sqrt{\kappa} \frac{\omega}{\rho} (\rho^{3/2} \omega_x)_x - \sqrt{\kappa} \frac{\psi}{\rho} (\rho^{3/2} \psi_x)_x \\
&\quad + \frac{G \omega + H\psi }{\rho} + \dot{X}(t) (\tilde{u}_x \omega + \tilde{w}_x \psi ).
\end{align*}
We integrate the above equation over $\mathbb{R}_+$ and take integration by parts to get
\begin{equation*} 
\begin{aligned}
    &\frac{d}{dt} \int_{\mathbb{R}_+} \psi \omega \, dx + \frac{1}{\sqrt{\kappa}} \int_{\mathbb{R}_+} \frac{p'(\rho)}{\rho^{1/2}}  |\omega|^2 \,dx + \sqrt{\kappa} \int_{\mathbb{R}_+} \rho^{1/2} |\omega_x|^2 \,dx \nonumber \\
    &\quad =  \left. \left( u \psi \omega - \frac{\mu}{\rho} \psi_x \omega - \sqrt{\kappa} \rho^{1/2} \omega \omega_x + \sqrt{\kappa} \rho^{1/2} \psi \psi_x \right) \right|_{x=0} \nonumber \\
    &\qquad + \int_{\mathbb{R}_+} u_x \psi \omega \,dx - \mu \int_{\mathbb{R}_+}  \psi_x \left( \frac{\omega_x}{\rho} - \frac{\rho_x \omega}{\rho^2} \right) dx \nonumber \\
    &\qquad + \sqrt{\kappa} \int_{\mathbb{R}_+} \rho^{1/2} |\psi_x|^2 \,dx + \sqrt{\kappa} \int_{\mathbb{R}_+} \frac{\rho_x}{\rho^{1/2}} (\omega \omega_x - \psi \psi_x) \,dx \nonumber \\
    &\qquad + \int_{\mathbb{R}_+} \frac{G \omega + H\psi }{\rho} \,dx + \dot{X}(t) \int_{\mathbb{R}_+} (\tilde{u}_x \omega + \tilde{w}_x \psi ) \,dx =:\sum_{i=1}^7 J_i.
\end{aligned}
\end{equation*}

We begin with the boundary term $J_1$. By employing Sobolev interpolation with the smallness assumption \eqref{p-assum}, and Young's inequality, we can bound $J_1$ as
\begin{align*}
|J_1| 
&\le C  \bigl( \|\psi\|_{L^\infty} |\omega(s,0)| + |\psi(s,0)|^{4/3} +  |\omega(s,0)|^{4/3} \bigr) \\
&\quad + C \bigl( \| \psi_x \|_{L^2}^2 \| \psi_{xx}\|_{L^2}^{2} +  \|\omega_x\|_{L^2}^{2} \|\omega_{xx}\|_{L^2}^{2} \bigr)  \\
&\le C  \delta^{4/3} e^{-c\delta|-\sigma s - X(s) - \beta|}  + C \varepsilon^2 \bigl( D_{u_2} + D_{w_2} \bigr).
\end{align*}

For $J_2$, we use $u_x=\psi_x + \tilde{u}_x$ and Young's inequality to obtain
\begin{equation*}
\begin{aligned}
|J_2| \le \|\psi\|_{L^\infty} \int_{\mathbb{R}_+} |\psi_x| |\omega | \, dx + \int_{\mathbb{R}_+} |\tilde{u}_x| |\psi| |\omega| \, dx \le C \varepsilon \bigl( D_{u_1}+ G_{w_1} \bigr) + C \delta ( G^S +G_{w_1} ).
\end{aligned}
\end{equation*}

For $J_3$, \eqref{p-assum} implies $|\rho_x| \le |\tilde{\rho}_x|+|\phi_x| \le C(\delta + \varepsilon)$. Thus,
\begin{align*}
    |J_3| &\le C \int_{\mathbb{R}_+} |\psi_x||\omega_x| \, dx +  C \int_{\mathbb{R}_+} |\rho_x| |\psi_x| |\omega| \, dx\le \frac{1}{16} D_{w_1} + C D_{u_1} + C (\delta + \varepsilon) (D_{u_1} +G_{w_1}).
\end{align*}
 
The term $J_4$ is simply bounded by $ |K_4| \le C D_{u_1} $. For $J_5$, we recall that 
\[\omega = \sqrt{\frac{\kappa}{\rho}}\phi_x +\sqrt{\kappa}\tilde{\rho}_x\left(\frac{1}{\sqrt{\rho}}-\frac{1}{\sqrt{\tilde{\rho}}}\right),\]
which implies $|\phi_x|\le C(|\omega|+|\tilde{\rho}_x||\phi|)$. Then, we again use $|\rho_x|\le |\phi_x|+|\tilde{\rho}_x|\le  C(\delta+\e)$ to derive
\begin{align*}
|J_5| &\le  C \int_{\mathbb{R}_+}(|\phi_x|+|\tilde{\rho}_x| ) |\omega| |\omega_x| \, dx +C \int_{\mathbb{R}_+} (|\phi_x|+|\tilde{\rho}_x| ) |\psi| |\psi_x| \, dx \\
&\le C \int_{\mathbb{R}_+}(\delta+\e) |\omega| |\omega_x| \, dx +C \int_{\mathbb{R}_+} (|\omega|+|\tilde{\rho}_x||\phi|+|\tilde{\rho}_x| ) |\psi| |\psi_x| \, dx \\
&\le C (\delta + \varepsilon)( G_{w_1}+D_{w_1} + G_1+G^S+ D_{u_1}).
\end{align*}
For $J_6$, utilizing the pointwise bound \eqref{eq:GH-est} and \eqref{est-quadratic}, we obtain
\begin{equation*}
\begin{aligned}
|J_6| &\le C \int_{\mathbb{R}_+} |\tilde{u}_x| |(\phi,\psi,\omega)|^2 dx \le C \sqrt{\delta} (G_1+G_3) +CG^S.
\end{aligned}
\end{equation*}
Finally, we estimate $J_7$ by using $|\tilde{w}_x| \le C \delta | \tilde{\rho}_x|$ as
\begin{align*}
    |J_7| \le C \delta^{3/4} |\dot{X}| \bigl( \sqrt{G_3} + \sqrt{G^S} \bigr) \le  \delta |\dot{X}|^2  + C \delta^{1/2} ( G_3 +G^S).
\end{align*}

Combining the estimates for $J_1$ to $J_7$ and integrating over $[0,t]$, we use the smallness of $\delta$ and $\e$ to deduce
\begin{equation} \label{est-highorder1}
\begin{aligned}
& \frac{1}{2}\int_0^t (G_{w_1} + D_{w_1}) \, ds  \\
&\le \frac{1}{2}(\|\psi\|_{L^2}^2+\|\omega\|_{L^2}^2+\|\psi_0\|_{L^2}^2+\|\omega_0\|_{L^2}^2)  + C_1 \int_0^t( \delta |\dot{X}(s)|^2 + D_{u_1} + G^S) \, ds \\
&\quad +C(\sqrt{\delta}+\e)\int_0^t(G_1+G_3)\,ds+C \varepsilon^2 \int_0^t  ( D_{u_2} + D_{w_2})\,ds + C e^{-c \delta \beta},
\end{aligned}
\end{equation}
for some positive constant $C_1$. Multiplying \eqref{est-highorder1} by $\frac{1}{C_1+1}$ and adding it to the $L^2$-estimate \eqref{est-L2}, we obtain the desired result in Lemma~\ref{lem:higeroder-1}.
\end{proof}

To close the estimate, we need to derive the second-order dissipative terms $D_{u_2}$, $G_{w_2}$ and $D_{w_2}$. These terms are derived from the following high-order estimates.

\begin{lemma}\label{lem:higher2}
	Under the assumptions of Proposition \ref{prop:apriori}, for any $\tau \in (0,1)$, there exist positive constants $C, c$, and $C_\tau$ depending on $\tau$ such that, for all $t \in [0,T]$,
	\begin{equation} \label{est-higher-2}
		\begin{aligned}
			&\|(\psi_x, \omega_x)(t,\cdot)\|_{L^2}^2 + \int_0^t D_{u_2} \,ds \\
			&\le  \|(\psi_{0x}, \omega_{0x})\|_{L^2}^2 + \tau \int_0^t D_{w_2} \,ds + C\delta \int_0^t |\dot{X}(s)|^2 \,ds \\
			&\quad + C_\tau \int_0^t \left(G_1 + G_3 + G^S+ D_{u_1} + G_{w_1} + D_{w_1}   \right) ds + C e^{-c\delta \beta}.
		\end{aligned}
	\end{equation}
\end{lemma}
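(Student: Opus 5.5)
The natural route is a first-order energy estimate obtained by differentiating the perturbed system \eqref{eq:perturbed-eq}. More precisely, we will test $\eqref{eq:perturbed-eq}_2$ against $-\frac{1}{\rho}(\rho\psi_x)_x$ (in practice against $-\psi_{xx}$ after dividing by $\rho$), and $\eqref{eq:perturbed-eq}_3$ against $-\omega_{xx}$. This choice exploits the skew-symmetric structure of the augmented system \eqref{eq:augmented-sys}, as announced in the introduction.

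Concretely, we divide $\eqref{eq:perturbed-eq}_2$ by $\rho$ and multiply by $-\psi_{xx}$. We divide $\eqref{eq:perturbed-eq}_3$ by $\rho$ and multiply by $-\omega_{xx}$. Then we sum and integrate over $\R_+$. The time-derivative terms produce $\frac12\frac{d}{dt}\|(\psi_x,\omega_x)\|_{L^2}^2$ plus boundary terms $\psi_t\psi_x|_{x=0}$ and $\omega_t\omega_x|_{x=0}$. The viscous term produces exactly $D_{u_2}$.

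The key cancellation is between the two capillarity terms. On one side we have $-\sqrt{\kappa}\int \frac{1}{\rho}(\rho^{3/2}\omega_x)_x\psi_{xx}$, and on the other $+\sqrt{\kappa}\int\frac1\rho(\rho^{3/2}\psi_x)_x\omega_{xx}$. Their leading parts $\rho^{1/2}\omega_{xx}\psi_{xx}$ cancel identically. What remains are lower-order terms carrying a factor $\rho_x=\tilde\rho_x+\phi_x$. These are bounded by $C(\delta+\varepsilon)$ times products of $\psi_{xx}$ or $\omega_{xx}$ with $\omega_x$ or $\psi_x$. By Young's inequality they are absorbed into $\frac18 D_{u_2}+\tau D_{w_2}+C_\tau(D_{u_1}+D_{w_1})$.

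The pressure-type term $\frac{p'(\rho)}{\sqrt\kappa\rho^{1/2}}\omega\psi_{xx}$ is controlled by $\frac18D_{u_2}+CG_{w_1}$. The convection terms $u\psi_x\psi_{xx}$ and $u\omega_x\omega_{xx}$ are handled by integration by parts, which gives $-\frac12\int u_x(\psi_x^2+\omega_x^2)$ plus boundary terms $-\frac12 u_-(\psi_x^2+\omega_x^2)(t,0)$. These boundary terms are nonnegative on the left-hand side because $u_-<0$, so the outflow sign helps here. The interior part is bounded by $C(\delta+\varepsilon)(D_{u_1}+D_{w_1})$.

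The error terms $G/\rho$ and $H/\rho$ tested against $\psi_{xx}$ and $\omega_{xx}$ are estimated with \eqref{eq:GH-est} and \eqref{est-quadratic}. The terms $(\rho^{3/2}-\tilde\rho^{3/2})_x$ inside them also produce $\phi_x$, and hence $\omega$. This gives $\frac18D_{u_2}+\tau D_{w_2}+C_\tau(\sqrt\delta(G_1+G_3)+G^S+G_{w_1})$. The shift terms are bounded using $|\tilde u_x|,|\tilde w_x|\le C\delta^2$, which gives $\frac18 D_{u_2}+\tau D_{w_2}+C\delta|\dot X|^2$.

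The main obstacle is the boundary contributions at $x=0$, since no density or Neumann condition is available for $\psi_x$ or $\omega_x$. For $\psi$, we have $\psi(t,0)=u_--\tilde u(-\sigma t-X-\beta)$, so $\psi_t(t,0)=(\sigma+\dot X)\tilde u'$ is exponentially small. For $\omega$, we use $w(t,0)=0$, so $\omega_t(t,0)=(\sigma+\dot X)\tilde w'$ is also exponentially small. The resulting terms $\psi_t\psi_x|_{0}$ and $\omega_t\omega_x|_0$ are then handled as in Lemma \ref{lem:bd}: we use Sobolev interpolation $|\psi_x(0)|\le C\|\psi_x\|^{1/2}\|\psi_{xx}\|^{1/2}$ together with \eqref{eq:exp_decay}. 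This yields $Ce^{-c\delta\beta}+\varepsilon^2(D_{u_2}+D_{w_2})$.

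The remaining capillarity boundary terms coming from the integrations by parts, such as $\frac{\sqrt\kappa}{\rho}\rho^{3/2}\omega_x\psi_x|_0$ in the cross terms, are the delicate part. I would first try to arrange the integrations by parts so that they cancel pairwise between the two equations; the same skew structure should make the mixed terms $\omega_x\psi_x|_{0}$ appear with opposite signs. If a residue survives, I would bound it by the trace inequality with the $\tau D_{w_2}$ allowance, using that it is multiplied by a small factor.

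Finally, we integrate in time, absorb the $D_{u_2}$ fractions, and take $\varepsilon,\delta$ small. This gives \eqref{est-higher-2}.
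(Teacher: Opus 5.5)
Your scheme matches the paper's proof: the multipliers $-\psi_{xx}/\rho$ and $-\omega_{xx}/\rho$, the pointwise cancellation of the $\sqrt{\kappa}\rho^{1/2}\psi_{xx}\omega_{xx}$ terms leaving a remainder proportional to $\rho_x\rho^{-1/2}(\psi_{xx}\omega_x-\omega_{xx}\psi_x)$, the boundary term $-(\psi_t\psi_x+\omega_t\omega_x)|_{x=0}$ handled through the exponentially small $\psi_t(t,0)$, $\omega_t(t,0)$ plus interpolation, and the Young-type bounds for $G$, $H$ and the shift terms. One step is wrong, though: the convection boundary term does \emph{not} have a good sign. In the identity
\begin{equation*}
\frac12\frac{d}{dt}\|(\psi_x,\omega_x)\|_{L^2}^2+\mu\int_{\R_+}\frac{|\psi_{xx}|^2}{\rho}\,dx=-(\psi_t\psi_x+\omega_t\omega_x)\big|_{x=0}+\int_{\R_+}u(\psi_x\psi_{xx}+\omega_x\omega_{xx})\,dx+\cdots
\end{equation*}
the convection integral sits on the right-hand side. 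Integrating by parts gives $\int_{\R_+}u\psi_x\psi_{xx}\,dx=\frac{|u_-|}{2}\psi_x(t,0)^2-\frac12\int_{\R_+}u_x\psi_x^2\,dx$, so $\frac{|u_-|}{2}(\psi_x^2+\omega_x^2)(t,0)\ge 0$ is a source, not a dissipation.

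The favourable outflow sign you have in mind belongs to pure transport. There $\psi_t(t,0)=-u_-\psi_x(t,0)$, and the term $-\psi_t\psi_x|_{x=0}$ would compensate. Here $\psi_t(t,0)$ is fixed by the Dirichlet data instead, which is the usual boundary-layer mechanism at an outflow boundary.

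The repair is easy. One option is the trace inequality $\psi_x(t,0)^2\le 2\|\psi_x\|_{L^2}\|\psi_{xx}\|_{L^2}$ (and likewise for $\omega_x$). This bounds the term by $\frac1{16}D_{u_2}+CD_{u_1}+\frac{\tau}{4}D_{w_2}+C_\tau D_{w_1}$, and this is one reason the $\tau D_{w_2}$ loss appears in the lemma. The other option, which is what the paper does, is not to integrate by parts at all and to bound $\bigl|\int u(\psi_x\psi_{xx}+\omega_x\omega_{xx})\,dx\bigr|$ directly by Young's inequality.

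The second route also avoids your interior term $\frac12\int u_x(\psi_x^2+\omega_x^2)\,dx$. That term is not bounded by $C(\delta+\varepsilon)(D_{u_1}+D_{w_1})$ as you claim, because \eqref{p-assum} gives no smallness of $\|\psi_x\|_{L^\infty}$. Interpolation only yields $C\varepsilon(D_{u_1}+D_{u_2}+D_{w_1})$, which is still acceptable.

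The capillarity boundary terms you worry about in your last paragraph never arise. The cancellation comes from expanding $(\rho^{3/2}\omega_x)_x$ and $(\rho^{3/2}\psi_x)_x$ by the product rule, with no integration by parts. Finally, for the shift terms you need $\|\tilde u_x\|_{L^2}\le C\delta^{3/2}$, since pointwise bounds alone do not suffice on $\R_+$.
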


\begin{proof} We multiply $\eqref{eq:perturbed-eq}_2$ by $-\frac{\psi_{xx}}{\rho}$ and $\eqref{eq:perturbed-eq}_3$ by $-\frac{\omega_{xx}}{\rho}$, and then integrate the sum of the resulting two equations over $\mathbb{R}_+$ to get
\begin{equation}\label{eq:higher-order-1}
\begin{aligned}
    &-\int_{\mathbb{R}_+} (\psi_t \psi_{xx} + \omega_t \omega_{xx}) \,dx - \int_{\mathbb{R}_+} u (\psi_x \psi_{xx} + \omega_x \omega_{xx}) \,dx - \frac{1}{\sqrt{\kappa}}\int_{\mathbb{R}_+} \frac{p'(\rho)}{\rho^{1/2}} \omega \psi_{xx} \,dx \\
    &\quad + \mu \int_{\mathbb{R}_+} \frac{|\psi_{xx}|^2}{\rho}  \,dx + \sqrt{\kappa} \int_{\mathbb{R}_+} \frac{\psi_{xx}}{\rho} (\rho^{3/2} \omega_x)_x \,dx - \sqrt{\kappa} \int_{\mathbb{R}_+} \frac{\omega_{xx}}{\rho} (\rho^{3/2} \psi_x)_x \,dx \\
    &=- \int_{\mathbb{R}_+} \frac{G\psi_{xx} + H\omega_{xx}}{\rho} \,dx  - \dot{X}(t) \int_{\mathbb{R}_+} (\tilde{u}_x \psi_{xx} + \tilde{w}_x \omega_{xx}) \,dx.
\end{aligned}
\end{equation}

Applying integration by parts to the time derivative terms and directly expanding the terms including $\sqrt{\kappa}$ to cancel out the highest-order derivatives, \eqref{eq:higher-order-1} can be simplified to
\begin{equation*}
\begin{aligned}
    &\frac{1}{2} \frac{d}{dt} \|(\psi_x, \omega_x)\|_{L^2}^2 + \mu \int_{\mathbb{R}_+} \frac{|\psi_{xx}|^2}{\rho}  \,dx \\
    &= -(\psi_t \psi_x + \omega_t \omega_x)|_{x=0}   + \int_{\mathbb{R}_+} \left[u (\psi_x \psi_{xx} + \omega_x \omega_{xx}) +\frac{1}{\sqrt{\kappa}}\frac{p'(\rho)}{\rho^{1/2}} \omega \psi_{xx} \right] dx \\
    &\quad - \frac{3}{2}\sqrt{\kappa} \int_{\mathbb{R}_+} \frac{\rho_x}{\rho^{1/2}} (\psi_{xx} \omega_x - \omega_{xx} \psi_x) \,dx - \int_{\mathbb{R}_+} \frac{G\psi_{xx} + H\omega_{xx}}{\rho} \,dx \\
    &\quad - \dot{X}(t) \int_{\mathbb{R}_+} (\tilde{u}_x \psi_{xx} + \tilde{w}_x \omega_{xx}) \,dx \\
    &=: K_1 + K_2 + K_3 + K_4 + K_5.
\end{aligned}
\end{equation*}

For the boundary term $K_1$, the boundary conditions $u(t,0) = u_- < 0$ and $w(t,0) = 0$ and \eqref{small-X} imply
\begin{equation} \label{eq:psi_t_omega_t}
    \begin{aligned}
        |\psi_t(t,0)| &= |\tilde{u}_t(t,0)| \le C | \tilde{u}'(-\sigma t -X(t) -\beta)||\sigma + \dot{X}(t)|\le C \delta^2 e^{-c\delta ( \sigma t +\beta)},\\
        |\omega_t(t,0)| &= |\tilde{w}_t(t,0)| \le C | \tilde{w}'(-\sigma t -X(t) -\beta)||\sigma + \dot{X}(t)|\le C \delta^3 e^{-c\delta ( \sigma t +\beta)},\\
        |\omega(t,0)| &= |\tilde{w}(-\sigma t-X(t)-\beta)| \le C|\tilde{\rho}'(-\sigma t -X(t) -\beta)|\le C \delta^2 e^{-c\delta ( \sigma t +\beta)}.
    \end{aligned}
\end{equation}
Utilizing the decay estimates \eqref{eq:psi_t_omega_t} along with the Sobolev inequality, we obtain
\begin{equation*}
\begin{aligned}
|K_1| &\le \left( \| \psi_x\|_{L^\infty} |\psi_t(t,0)|  + \| \omega_x\|_{L^\infty} | \omega_t(t,0)|\right) \\
&\le C \|\psi_x\|_{L^\infty}^2|\psi_t(t,0)|+|\psi_t(t,0)| +\|\omega_x\|_{L^\infty}^2|\omega_t(t,0)|+ |\omega_t(t,0)|\\
&\le C  \delta^2\left( \|\psi_x\|_{L^2}^2+ \|\omega_x\|_{L^2}^2 +\|\psi_{xx}\|_{L^2}^2+\|\omega_{xx}\|_{L^2}^2 \right) + C\delta^2 e^{-c\delta(\sigma t+ \beta)} \\
&\le C \delta^2 (D_{u_1}+D_{w_1}+D_{u_2}+D_{w_2})+ C\delta^2 e^{-c\delta(\sigma t+ \beta)}.
\end{aligned}
\end{equation*}


For $K_2$, applying Young's inequality with a small parameter $\tau \in (0,1)$, we obtain
\begin{equation*}
\begin{aligned}
|K_2| &\le C \int_{\mathbb{R}_+} |\psi_x | | \psi_{xx} | \, dx + C \int_{\mathbb{R}_+} | \omega_x | | \omega_{xx} | \, dx + C \int_{\mathbb{R}_+} | \omega | | \psi_{xx} | \, dx \\
&\le \frac{1}{16} D_{u_2} + \frac{\tau}{4} D_{w_2} + C_\tau (G_{w_1} + D_{u_1} + D_{w_1}),
\end{aligned}
\end{equation*}
where the constant $C_\tau>0$ depends on $\tau$ but is independent of $T,\beta, \delta$ and $\varepsilon$.

For $K_3$, we recall \eqref{p-assum} implies $\|\rho_x\|_{L^\infty} \le C(\delta + \varepsilon)$, which yields
\begin{equation*}
\begin{aligned}
|K_3| &\le C \|\rho_x\|_{L^\infty} \int_{\mathbb{R}_+} \left( |\psi_{xx}||\omega_x| + |\omega_{xx}||\psi_x| \right) dx\le C (\delta + \varepsilon) \left( D_{u_1}+ D_{u_2} + D_{w_1} + D_{w_2} \right).
\end{aligned}
\end{equation*}

We use \eqref{eq:GH-est}, \eqref{est-quadratic} and Young's inequality to estimate $K_4$ as
\begin{equation*}
\begin{aligned}
|K_4| &\le \int_{\mathbb{R}_+}  |\tilde{u}_x| |(\phi,\psi,\omega)| |(\psi_{xx},\omega_{xx})| \, dx \\
&\le C\|\tilde{u}_x\|_{L^\infty}^{1/2} ( D_{u_2} +  D_{w_2} )+ C \|\tilde{u}_x\|_{L^\infty}^{1/2} \int_{\mathbb{R}_+} |\tilde{u}_x| |(\phi,\psi,\omega)|^2 \, dx \\
&\le C \delta \bigl(D_{u_2}+D_{w_2} +G_1+G_3 + G^S \bigr).
\end{aligned}
\end{equation*}

For $K_5$, we similarly obtain
\begin{equation*}
\begin{aligned}
|K_5| \le \delta |\dot{X}(t)|^2 + C \delta (D_{u_2} + D_{w_2} ).
\end{aligned}
\end{equation*}


Combining the estimates from $K_1$ to $K_5$ and integrating over $[0,t]$, we use the smallness of $\delta$ and $\varepsilon$ to deduce
\begin{equation*}
\begin{aligned}
&\|(\psi_x, \omega_x)(t,\cdot)\|_{L^2}^2 + \int_0^t D_{u_2} \,ds \\
&\le  \|(\psi_{0x}, \omega_{0x})\|_{L^2}^2 + \tau \int_0^t D_{w_2} \,ds + C\delta \int_0^t |\dot{X}(s)|^2 \,ds \\
&\quad + C_\tau \int_0^t \left(G_1 + G_3 + G^S+ D_{u_1} + G_{w_1} + D_{w_1}   \right) ds + C \delta e^{-c\delta \beta},
\end{aligned}
\end{equation*}
which is the desired estimate.

\end{proof}

To recover the dissipation $D_{w_2}$ and close the estimate \eqref{est-higher-2}, we perform an additional cross-estimate. 


\begin{lemma}
	Under the assumptions of Proposition \ref{prop:apriori}, there exist positive constants $C, c$, and $C_2$ such that, for all $t \in [0,T]$,
	\begin{equation} \label{est-higher-3}
		\begin{aligned}
		  \frac{1}{2}\int_0^t(G_{w_2}+D_{w_2})\,ds &\le \bigl(\|(\psi_x,\omega_x)(t,\cdot)\|_{L^2}^2+\|(\psi_{0x},\omega_{0x})\|_{L^2}^2 \bigr)+C_2\int_0^t(D_{u_1}+D_{u_2})\,ds\\
			&\quad+C\delta\int_0^t|\dot{X}(s)|^2\,ds +C(\delta+\e)\int_0^t(G_1+G_3+G^S+D_{u_1})\,ds +C e^{-c\delta \beta}.
		\end{aligned}
	\end{equation}
\end{lemma}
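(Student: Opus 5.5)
This is the differentiated analogue of the cross estimate in Lemma~\ref{lem:higeroder-1}. I would multiply $\eqref{eq:perturbed-eq}_2$ by $-\frac{\omega_{xx}}{\rho}$ and $\eqref{eq:perturbed-eq}_3$ by $-\frac{\psi_{xx}}{\rho}$, add the two, and integrate over $\mathbb{R}_+$. The time-derivative terms give $-\int(\psi_t\omega_{xx}+\omega_t\psi_{xx})\,dx$. Integrating by parts once, this becomes $\frac{d}{dt}\int\psi_x\omega_x\,dx$ plus the boundary term $(\psi_t\omega_x+\omega_t\psi_x)|_{x=0}$. The pressure coupling in $\eqref{eq:perturbed-eq}_2$ gives $-\frac{1}{\sqrt\kappa}\int\frac{p'(\rho)}{\rho^{1/2}}\omega\omega_{xx}\,dx$. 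After one integration by parts this is $G_{w_2}$, plus a boundary term at $x=0$ carrying $\omega(t,0)$, plus a commutator with $(p'(\rho)\rho^{-1/2})_x$.

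For the capillarity terms, $\sqrt\kappa(\rho^{3/2}\omega_x)_x\cdot(-\omega_{xx}/\rho)$ produces $-\sqrt\kappa\int\rho^{1/2}|\omega_{xx}|^2$, which moves to the left side as $D_{w_2}$. There is also a lower-order piece $\frac32\sqrt\kappa\int\rho^{-1/2}\rho_x\omega_x\omega_{xx}$. The term $-\sqrt\kappa(\rho^{3/2}\psi_x)_x\cdot(-\psi_{xx}/\rho)$ gives $+\sqrt\kappa\int\rho^{1/2}|\psi_{xx}|^2$, bounded by $C D_{u_2}$, plus a $\rho_x\psi_x\psi_{xx}$ term. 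The viscous term $-\mu\int\psi_{xx}\omega_{xx}/\rho$ is handled by Young's inequality as $\frac18D_{w_2}+CD_{u_2}$. The transport terms $\int u(\psi_x\omega_{xx}+\omega_x\psi_{xx})\,dx$ combine to $-\int u_x\psi_x\omega_x\,dx$ plus a boundary term $u_-\psi_x\omega_x|_{x=0}$. The interior part is bounded by $C\int|\psi_x||\omega_x|\le \frac18 G_{w_2}+CD_{u_1}$. All these contributions are what give the constant $C_2$.

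Next come the remaining interior terms. The $\rho_x$-commutators are bounded using $\|\rho_x\|_{L^\infty}\le C(\delta+\e)$, giving $C(\delta+\e)(D_{u_1}+D_{u_2}+G_{w_2}+D_{w_2}+G_{w_1})$. The error terms $\int(G\omega_{xx}+H\psi_{xx})/\rho$ are bounded via \eqref{eq:GH-est} and \eqref{est-quadratic}, exactly as $K_4$, by $C\delta(D_{u_2}+D_{w_2}+G_1+G_3+G^S)$. The shift terms are bounded as $K_5$, by $\delta|\dot X|^2+C\delta(D_{u_2}+D_{w_2})$.

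\textbf{Main obstacle: the boundary terms.} Besides $(\psi_t\omega_x+\omega_t\psi_x)|_{x=0}$, integration by parts produces $\rho^{1/2}\omega_x\omega_{x}$-type and $\psi_x\psi_x$-type traces, and these are not small a priori. I would avoid most of them by integrating by parts only where a factor $\psi(t,0)$, $\omega(t,0)$, $\psi_t(t,0)$ or $\omega_t(t,0)$ appears; all of these decay like $C\delta^2e^{-c\delta(\sigma t+\beta)}$ by \eqref{eq:psi_t_omega_t}. The highest-order traces then never appear: I keep $\omega_{xx}$ and $\psi_{xx}$ paired inside the interior integrals rather than transferring derivatives. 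Any residual trace of $\psi_x\omega_x$, such as the transport boundary term $u_-\psi_x\omega_x|_{x=0}$, would be bounded by $\|\psi_x\|_{L^\infty}\|\omega_x\|_{L^\infty}$. Gagliardo--Nirenberg then controls this by $\frac18(G_{w_2}+D_{w_2})+C(D_{u_1}+D_{u_2})$, which can be absorbed. The decaying traces, as in $K_1$, give $C\delta^2(\dots)+Ce^{-c\delta\beta}$ after time integration.

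Finally, I absorb the $\frac18$ and $C(\delta+\e)$ multiples of $G_{w_2}$ and $D_{w_2}$ into the left side and integrate over $[0,t]$. The term $\int\psi_x\omega_x$ at times $t$ and $0$ is bounded by $\frac12\|(\psi_x,\omega_x)\|_{L^2}^2$ at each time. Together with the absorbed terms, this yields \eqref{est-higher-3}.
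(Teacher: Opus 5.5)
Your proposal is essentially the paper's proof. It uses the same multipliers $-\omega_{xx}/\rho$ and $-\psi_{xx}/\rho$, and the same direct expansion of the capillarity terms, so the only traces are the decaying ones $\psi_t\omega_x$, $\omega_t\psi_x$, $\omega\omega_x$ at $x=0$. The $\rho_x$-commutators, the $G,H$ error terms and the shift terms are handled the same way.

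Two small remarks:
\begin{itemize}
\item Integrating the transport term by parts is unnecessary. It introduces $u_x$, which \eqref{p-assum} does not bound in $L^\infty$, so the cubic piece $\int\psi_x^2\omega_x\,dx$ needs an extra Gagliardo--Nirenberg step. The paper instead bounds $\int u(\psi_x\omega_{xx}+\omega_x\psi_{xx})\,dx$ directly by Young.
\item Your commutator bound, like the paper's bound for the $(p'(\rho)\rho^{-1/2})_x\,\omega\omega_x$ term, produces $C(\delta+\varepsilon)\int_0^t G_{w_1}\,ds$. This term is missing from the stated right-hand side. It is harmless only because \eqref{est-higher-2} already carries $C_\tau\int_0^t G_{w_1}\,ds$ when the two estimates are combined.
\end{itemize}
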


\begin{proof}

Multiplying $\eqref{eq:perturbed-eq}_2$ by $-\frac{\omega_{xx}}{\rho}$ and $\eqref{eq:perturbed-eq}_3$ by $-\frac{\psi_{xx}}{\rho}$, and integrating the sum of the resulting equations over $\mathbb{R}_+$, we have
\begin{equation}\label{eq:cross-first-1}
\begin{aligned}
    &-\int_{\mathbb{R}_+} (\psi_t \omega_{xx} + \omega_t \psi_{xx}) \,dx - \int_{\mathbb{R}_+} u (\psi_x \omega_{xx} + \omega_x \psi_{xx}) \,dx - \frac{1}{\sqrt{\kappa}}\int_{\mathbb{R}_+} \frac{p'(\rho)}{\rho^{1/2}} \omega \omega_{xx} \,dx \\
    &= - \mu \int_{\mathbb{R}_+} \frac{\psi_{xx}}{\rho}  \omega_{xx} \,dx - \sqrt{\kappa} \int_{\mathbb{R}_+} \frac{\omega_{xx}}{\rho} (\rho^{3/2} \omega_x)_x \,dx + \sqrt{\kappa} \int_{\mathbb{R}_+} \frac{\psi_{xx}}{\rho} (\rho^{3/2} \psi_x)_x \,dx \\
    &\quad  -\int_{\mathbb{R}_+} \frac{G \omega_{xx}+H \psi_{xx}}{\rho}  \,dx - \dot{X}(t)\int_{\mathbb{R}_+} \bigl(  \tilde{u}_x \omega_{xx} + \tilde{w}_x \psi_{xx}\bigr) \,dx.
\end{aligned}
\end{equation}

Applying integration by parts, 
\eqref{eq:cross-first-1} can be rearranged as
\begin{equation*}
\begin{aligned}
    &\frac{d}{dt} \int_{\mathbb{R}_+} \psi_x \omega_x \,dx + \frac{1}{\sqrt{\kappa}}\int_{\mathbb{R}_+} \frac{p'(\rho)}{\rho^{1/2}} |\omega_{x}|^2 \,dx +\sqrt{\kappa} \int_{\mathbb{R}_+}  \rho^{1/2} |\omega_{xx}|^2 \,dx \\
    &= -\Bigl(\psi_t \omega_x + \omega_t \psi_x + \frac{1}{\sqrt{\kappa}} \frac{p'(\rho)}{\rho^{1/2}} \omega \omega_x \Bigr) \Big|_{x=0} + \int_{\mathbb{R}_+} u (\psi_x \omega_{xx} + \omega_x \psi_{xx}) \,dx \\
    &\quad -\mu \int_{\mathbb{R}_+} \frac{\psi_{xx}}{\rho}  \omega_{xx} \,dx -\frac{1}{\sqrt{\kappa}} \int_{\mathbb{R}_+} \left( \frac{p'(\rho)}{\rho^{1/2}} \right)_x \omega \omega_x \,dx  \\
    &\quad - \frac{3}{2}\sqrt{\kappa} \int_{\mathbb{R}_+} \frac{\rho_x}{\rho^{1/2}} (\omega_x \omega_{xx} - \psi_x \psi_{xx}) \,dx + \sqrt{\kappa} \int_{\mathbb{R}_+} \rho^{1/2} |\psi_{xx}|^2 \,dx \\
    &\quad -\int_{\mathbb{R}_+} \frac{G \omega_{xx}+H \psi_{xx}}{\rho}  \,dx - \dot{X}(t)\int_{\mathbb{R}_+} \bigl(  \tilde{u}_x \omega_{xx} + \tilde{w}_x \psi_{xx}\bigr) \,dx =: \sum_{i=1}^8 L_i.
\end{aligned}
\end{equation*}

We estimate the right-hand side terms $L_1, \dots, L_8$ as follows. 

For $L_1$, utilizing the decay estimates \eqref{eq:psi_t_omega_t} along with the Sobolev inequality, we obtain
\begin{equation*}
\begin{aligned}
|L_1| &\le   \left( \| \omega_x\|_{L^\infty} |\psi_t(t,0)|  + \| \psi_x\|_{L^\infty} | \omega_t(t,0)|+ |\omega(t,0)| \|\omega_x\|_{L^\infty}\right)  \\
&\le C  \delta^2 e^{-c \delta (\sigma t +\beta)}\left( \|\psi_x\|_{L^2}^{1/2}  \|\psi_{xx}\|_{L^2}^{1/2} + \|\omega_x\|_{L^2}^{1/2} \|\omega_{xx}\|_{L^2}^{1/2} \right) \\
&\le C \varepsilon^2 \left( D_{u_2}  + D_{w_2} \right)  + C \delta^{8/3} e^{-c\delta(\sigma t+\beta)}.
\end{aligned}
\end{equation*}

For $L_2$ and $L_3$, we apply Young's inequality to obtain
\begin{equation*}
\begin{aligned}
|L_2| \le \frac{1}{16} (G_{w_2} + D_{w_2})  + C ( D_{u_1} + D_{u_2}),\quad\mbox{and}\quad |L_3| \le  \frac{1}{16}  D_{w_2}  + C  D_{u_2}.
\end{aligned}
\end{equation*}

For $L_4$ and $L_5$, the estimate $\|\rho_x\|_{L^\infty} \le C(\delta + \varepsilon)$ implies
\begin{equation*}
\begin{aligned}
|L_4| &\le C \|\rho_x\|_{L^\infty} \int_{\mathbb{R}_+} |\omega| |\omega_x| \, dx \le  C(\delta + \varepsilon) \left( G_{w_1} + G_{w_2} \right),
\end{aligned}
\end{equation*}
and 
\begin{equation*}
\begin{aligned}
|L_5| &\le C \|\rho_x\|_{L^\infty} \int_{\mathbb{R}_+} (|\omega_x| |\omega_{xx}| + |\psi_x| |\psi_{xx}| ) \, dx \le  C(\delta + \varepsilon) \left( G_{w_2} + D_{w_2} + D_{u_1} + D_{u_2}\right).
\end{aligned}
\end{equation*}

The term $L_6$ is simply bounded by $|L_6| \le C D_{u_2}$. For $L_7$, we use \eqref{eq:GH-est}, \eqref{est-quadratic} and Young's inequality to obtain
\begin{equation*}
\begin{aligned}
|L_7| &\le \int_{\mathbb{R}_+}  |\tilde{u}_x| |(\phi,\psi,\omega)| |(\psi_{xx},\omega_{xx})| \, dx \\
&\le C\|\tilde{u}_x\|_{L^\infty}^{1/2} ( D_{u_2} +  D_{w_2} )+ C \|\tilde{u}_x\|_{L^\infty}^{1/2} \int_{\mathbb{R}_+} |\tilde{u}_x| |(\phi,\psi,\omega)|^2 \, dx \\
&\le  C \delta \bigl( G_1+G_3 + G^S + D_{u_2} + D_{w_2} \bigr).
\end{aligned}
\end{equation*}

Finally, we estimate $L_8$ as before:
\begin{equation*}
\begin{aligned}
|L_8| \le \delta |\dot{X}(t)|^2 + C \delta^{1/2} (D_{u_2} + D_{w_2} ).
\end{aligned}
\end{equation*}

Combining the estimates from $L_1$ to $L_8$ and integrating over $[0,t]$, we deduce
\begin{equation*}
\begin{aligned}
&\int_{\mathbb{R}_+} \psi_x \omega_x \,dx + \frac{1}{2}\int_0^t (G_{w_2}+D_{w_2}) \,ds \\
&\le  \int_{\R_+}\psi_{0x}\omega_{0x}\,dx  + C \int_0^t (D_{u_1}+D_{u_2}) \, ds + \delta \int_0^t |\dot{X}(s)|^2 \,ds \\
&\quad + C(\delta+\varepsilon) \int_0^t \left( G_1 + G_3 + G^S  + D_{u_1} \right) ds + C \delta e^{-c\delta \beta}.
\end{aligned}
\end{equation*}


Thus, there exist positive constants $C, C_2 > 0$ such that
\begin{align*}
	\frac{1}{2}\int_0^t(G_{w_2}+D_{w_2})\,ds &\le \bigl(\|(\psi_x,\omega_x)(t,\cdot)\|_{L^2}^2+\|(\psi_{0x},\omega_{0x})\|_{L^2}^2 \bigr) +C_2\int_0^t(D_{u_1}+D_{u_2})\,ds\\
	&\quad+C\delta\int_0^t|\dot{X}(s)|^2\,ds +C(\delta+\e)\int_0^t(G_1+G_3+G^S+D_{u_1})\,ds +C\delta e^{-c\delta \beta},
\end{align*}
which is the desired estimate.

\end{proof}





\noindent \textbf{Proof of Lemma \ref{lem:H1}.} 
Multiplying \eqref{est-higher-3} by $\frac{1}{C_2+1}$ and adding it to \eqref{est-higher-2}, we obtain
\begin{align*}
	&\|(\psi_x,\omega_x)(t,\cdot)\|_{L^2}^2+\int_0^t D_{u_2}\,ds + \frac{1}{2(C_2+1)}\int_0^t (G_{w_2}+D_{w_2})\,ds\\
	&\le C\|(\psi_{0x},\omega_{0x})\|_{L^2}^2 + \frac{1}{C_2+1}\|(\psi_x,\omega_x)\|_{L^2}^2 +\tau\int_0^t D_{w_2}\,ds \\
	&\quad + \frac{C_2}{C_2+1}\int_0^t (D_{u_1}+D_{u_2})\,ds + C\delta\int_0^t|\dot{X}(s)|^2\,ds \\
	&\quad +C_\tau\int_0^t (G_1+G_3+G^S+D_{u_1}+G_{w_1}+D_{w_1})\,ds +C\delta e^{-c\delta\beta}.
\end{align*}
Choosing $\tau = \frac{1}{4(C_2+1)}$, there exists a positive constant $C_3$ such that
\begin{equation} \label{est-higher-combine-1}
\begin{aligned}
&  \|(\psi_x, \omega_x)(t,\cdot)\|_{L^2}^2 + \int_0^t   (D_{u_2}+G_{w_2}+D_{w_2}) \,ds\\
&\le  C \|(\psi_{0x}, \omega_{0x})\|_{L^2}^2 + C \delta e^{-c\delta \beta}  \\
&\quad + C_3 \int_0^t \left(\delta|\dot{X}(s)|^2 + G_1 + G_3 + G^S+ D_{u_1} + G_{w_1} + D_{w_1}   \right) ds.
\end{aligned}
\end{equation}
Multiplying \eqref{est-higher-combine-1} by $\frac{1}{C_3+1}$ and adding it to \eqref{est-L2-combined}, and using the smallness of $\delta$ and $\varepsilon$, we finally derive \eqref{est:H1}. This completes the proof of Lemma~\ref{lem:H1}.

\begin{appendix}
\section{Proof of time-asymptotic behavior}\label{sec:app-A}

In this appendix, we prove that $g$ defined in \eqref{def:g} satisfies $g\in W^{1,1}(\R_+)$, completing the proof of time-asymptotic behavior \eqref{asym-U}. First of all, we note that
\[|(\rho-\tilde{\rho})_x|\le C|w-\tilde{w}|+C|\tilde{\rho}_x||\rho-\tilde{\rho}|.\]
Therefore, we get
\[\int_0^\infty |g(t)|\,dt \le \int_0^\infty (G_{w_1}+G_1+G^S+D_{u_1})\,dt<+\infty.\]
Next, we will show that $g'(t)\in L^1(\R_+)$. We split $\|g'\|_{L^1}$ as
\begin{align*}
	\int_0^\infty |g'(t)|\,dt&\le \int_0^\infty \left|\int_{\R_+}\phi_x\phi_{xt}\,dx\right|+\left|\int_{\R_+}\psi_x\psi_{xt}\,dx\right|+\left|\int_{\R_+}\phi_{xx}\phi_{xxt}\,dx\right|\,dt\\
	&=:I_{1}+I_{2}+I_{3}.
\end{align*}

We first estimate $I_1$. It follows from \eqref{eq:perturbed-eq} that
\[\phi_{xt} = -u\phi_{xx}-\rho\psi_{xx}+F_1+\dot{X}\tilde{\rho}_{xx},\]
where $F_1$ is defined by
\[
F_1 := F_x + u_x \phi_x + \rho_x \psi_x,
\]
and satisfies
\[|F_1|\le C\big(\delta |\tilde{u}_x||(\phi,\psi)|+|\tilde{u}_x||(\phi_x,\psi_x)|+|\phi_x\psi_x|\big).\]
Therefore, we split $I_1$ as
\begin{align*}
	I_1&\le\int_0^\infty \left|\int_{\R_+}u\phi_x\phi_{xx}\,dx\right|+\left|\int_{\R_+}\rho\psi_{xx}\phi_x\,dx\right|+\left|\int_{\R_+}F_1\phi_x\,dx\right|+|\dot{X}|\left|\int_{\R_+}\phi_x\tilde{\rho}_{xx}\,dx\right|\,dt\\
	&=:I_{11}+I_{12}+I_{13}+I_{14}.
\end{align*}
It is straightforward to observe that
\[I_{11}+I_{12}\le \int_0^\infty \|\phi_x\|_{H^1}^2+\|\psi_{xx}\|_{L^2}^2\,dt\le\int_0^\infty (G_1+G^S+G_{w_1}+D_{w_1}+D_{u_2})\,dt.\]
Furthermore, we estimate $I_{13}$ using \eqref{p-assum} as
\begin{align*}
	I_{13}&\le C\int_0^\infty \int_{\R_+}(\delta|\tilde{u}_x||(\phi,\psi)|+|\tilde{u}_x||(\phi_x,\psi_x)+|\phi_x\psi_x|)|\phi_x|\,dx\,dt\\
	&\le C\int_0^\infty (G_1+G^S+G_{w_1}+D_{u_1})\,dt.
\end{align*}
Finally, $I_{14}$ is bounded as
\[I_{14}\le C\delta\int_0^\infty |\dot{X}|^2\,ds +C\int_0^\infty (G_1+G^S+G_{w_1})\,dt.\]
Thus, combining the estimate of $I_{11},\ldots,I_{14}$, we have $|I_1|<+\infty$. Next, we control $I_2$. After integration by parts, we have
\begin{align*}
	|I_2|&\le \int_0^\infty |\psi_x(t,0)||\psi_t(t,0)|\,dt+\int_0^\infty\left|\int_{\R_+}\psi_{xx}\psi_{t}\,dx\right|\,dt\\
	&\le C+C\int_0^\infty (D_{u_1}+D_{u_2})\,dt+C\int_0^\infty\int_{\R_+}|\psi_t|^2\,dx\,dt.
\end{align*}
Using the equation \eqref{eq:perturbed-eq}, we get
\begin{align*}
	\int_0^\infty \int_{\R_+}|\psi_t|^2\,dx\,dt&\le C\int_0^\infty (D_{u_1}+G_{w_1}+D_{u_2}+D_{w_1}+D_{w_2}+G_1+G^S)\,dt+C\delta\int_0^\infty |\dot{X}|^2\,dt\\
	&< +\infty.
\end{align*}
This implies $|I_{2}|<+\infty$. Finally, we again use integration by parts to modify $I_3$ as
\begin{align}
\begin{aligned}\label{I3}	
	I_3\le\left|\int_{\R_+}\phi_{xx}\phi_{xxt}\,dx\right| &\le \left|\int_{\R_+}\phi_{xxx}\phi_{xt}\,dx\right|+|\phi_{xx}(t,0)\phi_{xt}(t,0)|\\
	&\le\left|\int_{\R_+}\phi_{xxx}\phi_{xt}\,dx\right|+\|\phi_{xx}\|_{L^\infty}|\tilde{\rho}_x(t,0)|.
\end{aligned}
\end{align}
Since the smallness assumption \eqref{p-assum} implies
\begin{align*}
	|\phi_{xx}| &= \left|\left(\sqrt{\frac{\rho}{\kappa}}\omega-\sqrt{\rho}\tilde{\rho}_x\left(\frac{1}{\sqrt{\rho}}-\frac{1}{\sqrt{\tilde{\rho}}}\right)\right)_x\right|\le C(|\rho_x\omega|+|\omega_x|+|\tilde{\rho}_x||\rho_x||\phi|+|\tilde{\rho}_x|(|\phi_x|+|\phi|))\\
	&\le C(|\phi_x||\omega|+|\tilde{\rho}_x||\omega|+|\omega_x|+|\tilde{\rho}_x||\phi|+|\tilde{\rho}_x||\phi_x||\phi|)\\
	&\le C(|\omega|+|\omega_x|+|\tilde{\rho}_x||\phi|),
\end{align*}
and
\begin{align*}
	|\phi_{xxx}|&\le C(|\rho_{xx}\omega|+|\rho_x||\omega_x|+|\omega_{xx}|+|\rho_{xx}||\tilde{\rho}_x||\phi|+|\tilde{\rho}_{xxx}||\phi|+|\tilde{\rho}_x|(|\phi_{xx}|+|\phi_x|+|\phi|)\\&
	\quad + |\rho_x||\tilde{\rho}_{xx}||\phi|+|\rho_x||\tilde{\rho}_x|(|\phi_x|+|\phi|)+|\tilde{\rho}_{xx}|(|\phi_x|+|\phi|)\\
	&\le C(|\omega|+|\phi_{xx}|+|\omega_x|+|\phi_x||\omega_x|+|\omega_{xx}|+|\phi_{xx}|+|\tilde{\rho}_x||\phi|+|\phi_x|+|\phi_x|^2)\\
	&\le C(|\omega|+|\omega_x|+|\omega_{xx}|+|\tilde{\rho}_x||\phi|)<+\infty,
\end{align*}
we can bound the second term of the right-hand side of \eqref{I3} as

\begin{align*}
	\int_0^\infty \|\phi_{xx}(t)\|_{L^\infty}|\tilde{\rho}_x(t,0)|\,dt&\le C\int_0^\infty \|\phi_{xx}\|_{H^1}^2\,dt +\int_0^\infty |\tilde{\rho}_x(t,0)|^2\,dt\\
	&\le\int_0^\infty (G_{w_1}+D_{w_1}+G_1+G^S)\,dt+C\delta e^{-c_0\delta\beta}.
\end{align*}
Finally, using a similar argument as in the estimate of $I_1$, we obtain
\begin{align*}
	&\int_0^\infty \left|\phi_{xxx}\phi_{xt}\,dx \right|\,dt\\
    &\quad \leq \int_0^\infty\left|\int_{\R_+}u\phi_{xx}\phi_{xxx}\,dx\right| +\left|\int_{\R_+}\rho\psi_{xx}\phi_{xxx}\,dx\right|+\left|\int_{\R_+}F_1\phi_{xxx}\,dx\right|+|\dot{X}|\left|\int_{\R_+}\phi_{xxx}\tilde{\rho}_{xx}\,dx\right|\,dt\\
	&\quad \le\int_0^\infty (G_1+G^S+G_{w_1}+D_{u_1}+D_{u_2}+D_{w_1}+D_{w_2})\,dt+C\delta\int_0^\infty |\dot{X}|^2\,dt<+\infty.
\end{align*}
Thus, we conclude that $g\in W^{1,1}(\R_+)$, and this implies the desired time-asymptotic behavior \eqref{asym-U}.

\end{appendix}

\bibliography{reference}

\begin{thebibliography}{10}

\bibitem{BDD06} S. Benzoni-Gavage, R. Danchin, and S. Descombes, Well-posedness of one-dimensional Korteweg models, Electron. J. Differ. Equations 2006 (2006), Paper No. 5.

\bibitem{CL21} Z.-Z. Chen and Y.-P. Li, Z. Chen and Y. Li, Asymptotic behavior of solutions to an impermeable wall problem of the compressible fluid models of Korteweg type with density-dependent viscosity and capillarity, SIAM J. Math. Anal. {\bf 53} (2021), no.~2, 1434--1473; MR4228314

\bibitem{CLS19} Z. Chen, Y. Li and M. Sheng, Asymptotic stability of viscous shock profiles for the 1D compressible Navier-Stokes-Korteweg system with boundary effect, Dyn. Partial Differ. Equ. {\bf 16} (2019), no.~3, 225--251; MR4008883

\bibitem{D96} C. M. Dafermos, Entropy and the stability of classical solutions of hyperbolic systems of conservation laws, in {\it Recent mathematical methods in nonlinear wave propagation}, 48--69, Lecture Notes in Math., 1640, Springer-Verlag, Berlin, 1996. 

\bibitem{D79} R. J. DiPerna, Uniqueness of solutions to hyperbolic conservation laws, {\it Indiana Univ. Math. J.}, 28 (1979), no. 1, 137--188. 

\bibitem{DS85} J.~E. Dunn and J.~B. Serrin Jr., On the thermomechanics of interstitial working, Arch. Rational Mech. Anal. {\bf 88} (1985), no.~2, 95--133; MR0775366

\bibitem{HK26} S. Han and J. Kim, Time-asymptotic stability of composite wave for the one-dimensional compressible fluid of Korteweg type, SIAM J. Math. Anal. {\bf 58} (2026), no.~1, 547--597; MR5025407

\bibitem{HKKL25-JDE} S. Han et al., Long-time behavior towards viscous-dispersive shock for Navier-Stokes equations of Korteweg type, J. Differential Equations {\bf 426} (2025), 317--387; MR4854625

\bibitem{HMS03} F. Huang, A. Matsumura and X. Shi, Viscous shock wave and boundary layer solution to an inflow problem for compressible viscous gas, Comm. Math. Phys. {\bf 239} (2003), no.~1-2, 261--285; MR1997442

\bibitem{Hong20} H. Hong, Stationary solutions to outflow problem for 1-D compressible fluid models of Korteweg type: existence, stability and convergence rate, Nonlinear Anal. Real World Appl. {\bf 53} (2020), 103055, 27 pp.; MR4033130

\bibitem{Hong22} H. Hong, Stability of stationary solutions and viscous shock wave in the inflow problem for isentropic Navier-Stokes-Korteweg system, J. Differential Equations {\bf 314} (2022), 518--573; MR4369180

\bibitem{HKKL25-JMAA} X. Huang et al., Asymptotic behavior toward viscous shock for impermeable wall and inflow problems of barotropic Navier-Stokes equations, J. Math. Anal. Appl. {\bf 552} (2025), no.~2, Paper No. 129803, 50 pp.; MR4925912

\bibitem{HLO26-AA} X. Huang, H. Lee, and H. Oh, Stability of viscous shock for the Navier--Stokes--Fourier system: outflow and impermeable wall problems, {\it Anal. Appl.}, (2026), 1--52. 

\bibitem{KOW25} M.-J. Kang, H. Oh, and Y. Wang, Asymptotic behavior toward viscous shocks for the outflow problem of barotropic Navier--Stokes equations, arXiv preprint arXiv:2505.08171 (2025). 

\bibitem{KV17} M.-J. Kang and A.~F. Vasseur, $L^2$-contraction for shock waves of scalar viscous conservation laws, Ann. Inst. H. Poincar\'e{} C Anal. Non Lin\'eaire {\bf 34} (2017), no.~1, 139--156; MR3592682

\bibitem{KV21} M.-J. Kang and A.~F. Vasseur, Contraction property for large perturbations of shocks of the barotropic Navier-Stokes system, J. Eur. Math. Soc. (JEMS) {\bf 23} (2021), no.~2, 585--638; MR4195742

\bibitem{KV-Inven} M.-J. Kang and A.~F. Vasseur, Uniqueness and stability of entropy shocks to the isentropic Euler system in a class of inviscid limits from a large family of Navier-Stokes systems, Invent. Math. {\bf 224} (2021), no.~1, 55--146; MR4228501

\bibitem{KVW23} M.-J. Kang, A.~F. Vasseur and Y. Wang, Time-asymptotic stability of composite waves of viscous shock and rarefaction for barotropic Navier-Stokes equations, Adv. Math. {\bf 419} (2023), Paper No. 108963, 66 pp.; MR4560999

\bibitem{KNZ03} S. Kawashima, S. Nishibata and P. Zhu, Asymptotic stability of the stationary solution to the compressible Navier-Stokes equations in the half space, Comm. Math. Phys. {\bf 240} (2003), no.~3, 483--500; MR2005853

\bibitem{KZ08} S. Kawashima and P. Zhu, Asymptotic stability of nonlinear wave for the compressible Navier-Stokes equations in the half space, J. Differential Equations {\bf 244} (2008), no.~12, 3151--3179; MR2420517

\bibitem{KZ09} S. Kawashima and P. Zhu, Asymptotic stability of rarefaction wave for the Navier-Stokes equations for a compressible fluid in the half space, Arch. Ration. Mech. Anal. {\bf 194} (2009), no.~1, 105--132; MR2533925

\bibitem{Kor01} D. J. Korteweg, Sur la forme que prennent les \'equations du mouvement des fluides si l'on tient compte des forces capillaires par des variations de densit\'e, Arch. Neerl. Sci. Exactes Nat. Ser. II 6 (1901), 1--24.

\bibitem{Kotschote08} M. Kotschote, Strong solutions for a compressible fluid model of Korteweg type, Ann. Inst. H. Poincar\'e{} C Anal. Non Lin\'eaire {\bf 25} (2008), no.~4, 679--696; MR2436788

\bibitem{Kotschote10} M. Kotschote, Strong well-posedness for a Korteweg-type model for the dynamics of a compressible non-isothermal fluid, J. Math. Fluid Mech. {\bf 12} (2010), no.~4, 473--484; MR2749439

\bibitem{LC22} Y. Li and Z. Chen, Large-time behavior of solutions to an inflow problem for the compressible Navier-Stokes-Korteweg equations in the half space, J. Math. Fluid Mech. {\bf 24} (2022), no.~4, Paper No. 103, 24 pp.; MR4487786



\bibitem{LTY22} Y. Li, J. Tang and S. Yu, Asymptotic stability of rarefaction wave for the compressible Navier-Stokes-Korteweg equations in the half space, Proc. Roy. Soc. Edinburgh Sect. A {\bf 152} (2022), no.~3, 756--779; MR4430951


\bibitem{LXC23} Y. Li, R. Xu and Z. Chen, Asymptotic stability of a nonlinear wave for the compressible Navier-Stokes-Korteweg equations in the half space, Z. Angew. Math. Phys. {\bf 74} (2023), no.~4, Paper No. 167, 27 pp.; MR4620251

\bibitem{LZ21} Y. Li and P. Zhu, Asymptotic stability of the stationary solution to an out-flow problem for the Navier-Stokes-Korteweg equations of compressible fluids, Nonlinear Anal. Real World Appl. {\bf 57} (2021), Paper No. 103193, 23 pp.; MR4130092

\bibitem{MatBVP} A. Matsumura, Inflow and outflow problems in the half space for a one-dimensional isentropic model system of compressible viscous gas, Nonlinear Anal. {\bf 47} (2001), no.~6, 4269--4282; MR1972365

\bibitem{MM99} A. Matsumura and M. Mei, Convergence to travelling fronts of solutions of the $p$-system with viscosity in the presence of a boundary, Arch. Ration. Mech. Anal. {\bf 146} (1999), no.~1, 1--22; MR1682659

\bibitem{MN01} A. Matsumura and K. Nishihara, Large-time behaviors of solutions to an inflow problem in the half space for a one-dimensional system of compressible viscous gas, Comm. Math. Phys. {\bf 222} (2001), no.~3, 449--474; MR1888084

\bibitem{NZ09} T.~T. Nguyen and K.~R. Zumbrun, Long-time stability of large-amplitude noncharacteristic boundary layers for hyperbolic-parabolic systems, J. Math. Pures Appl. (9) {\bf 92} (2009), no.~6, 547--598; MR2565843

\bibitem{Seppecher93} P. Seppecher, Equilibrium of a Cahn-Hilliard fluid on a wall: influence of the wetting properties of the fluid upon the stability of a thin liquid film, European J. Mech. B Fluids {\bf 12} (1993), no.~1, 69--84; MR1204995

\bibitem{Seppecher96} P. Seppecher, Moving contact lines in the Cahn-Hilliard theory, Int. J. Eng. Sci. 34 (1996), 977--992. \url{https://doi.org/10.1016/0020-7225(95)00141-7}

\bibitem{VdW94} J. D. van der Waals, Thermodynamische Theorie der Kapillarit\"at unter Voraussetzung stetiger Dichte\"anderung, {\it Z. Phys. Chem.}, 13 (1894), no. 1, 657--725. 

\end{thebibliography}

\end{document}